\title{Delta operation at height 2}
\date{}
\author{Noam Nissan}
\renewcommand{\tilde}[1]{\widetilde{#1}}
\newtheorem{theorem}{Theorem}[subsection]
\newtheorem{example}[theorem]{Example}
\newtheorem{corollary}[theorem]{Corollary}
\newtheorem{proposition}[theorem]{Proposition}
\newtheorem{definition}[theorem]{Definition}
\newtheorem{lemma}[theorem]{Lemma}
\newtheorem{remark}[theorem]{Remark}
\newtheorem{notation}[theorem]{Notation}
\newcommand\explicitset[1]{\left\{#1\right\}}
\newcommand\cardinality[1]{\left|#1\right|}
\newcommand\parenth[1]{\left(#1\right)}
\newcommand\squarebrack[1]{\left[#1\right]}
\DeclareMathOperator{\spf}{spf}
\DeclareMathOperator{\spec}{spec}
\newcommand{\spaces}{\mathcal{S}}
\newcommand\catname[1]{\textbf{#1}}
\newcommand\cat[1]{\mathcal{#1}}
\DeclareMathOperator{\nm}{Nm}
\DeclareMathOperator{\tr}{tr}
\DeclareMathOperator{\colim}{colim}
\DeclareMathOperator{\id}{id}
\DeclareMathOperator{\fun}{Fun}
\DeclareMathOperator{\map}{Map}
\DeclareMathOperator{\eoperad}{\mathbb{E}}
\newcommand{\calg}{\catname{CAlg}}
\DeclareMathOperator{\fin}{fin}
\DeclareMathOperator{\cmon}{CMon}
\DeclareMathOperator{\spaan}{Span}
\DeclareMathOperator{\en}{End}
\DeclareMathOperator{\iso}{Iso}
\DeclareMathOperator{\im}{Im}
\newcommand{\integers}{\mathbb{Z}}
\newcommand{\rationals}{\mathbb{Q}}
\newcommand{\finitefield}{\mathbb{F}}
\newcommand{\spectra}{\catname{Sp}}
\newcommand{\smunit}{\mathds{1}}
\newcommand{\complexprojective}{\mathbb{CP}^\infty}
\newcommand{\suspension}{\Sigma}
\newcommand\tmnt[1]{\catname{Mod}_{E_#1}(\spectra_{K(#1)})}
\DeclareMathOperator{\cl}{Cl}
\DeclareMathOperator{\Sum}{Sum}
\DeclareMathOperator{\stab}{Stab}
\begin{document}

\maketitle

Submitted as part of the requirements of obtaining the degree of A Master of Science under the supervision of Prof. Tomer Moshe Schlank in the Hebrew University in March 2022. The author's name durning the preparation of this work was Noam Zimhoni.

\begin{abstract}
    We compute the delta power operation for morava E-theory of height 2 at the prime 3. The delta power operation was defined using the notion of higher semi additivity by Shachar Carmeli, Tomer M. Schlank and Lior Yanovski. We briefly survey the basic definitions in higher semi-additivity. Using explicit formulas for moduli spaces of elliptic curves and computations done by Y. Zhu for the total power operation we provide an explicit formula for the delta power operation. We obtain the numerical result that the delta operation of a polynomial is a polynomial which can be explained by a certain connection which was described by M. Hopkins, C. Rezk and later N. Stapleton between similar operations and the Hecke action on the functions on the moduli space of elliptic curves.
\end{abstract}

\section{Introduction}
 
In recent years, the notion of higher semiadditivity has become central in chromatic homotopy theory. The first nontrivial example of a higher semiadditive $\infty$-category is the category $\spectra_{K(n)}$ of $K(n)$-local spectra, as shown in \cite{HL}.

Let $k$ be a field of characteristic $p$ (we will consider the case $k = \finitefield_q$), and let $G$ be a formal group of height $n$ over $k$. There are two important homotopy commutative spectra associated to this data, denoted $K(n)$ and $E_n$. The ring $\pi_0(E_n)$ is local, and its residue field is canonically isomorphic to $\pi_0(K(n)) = \finitefield_q$. Both spectra are periodic. Explicitly,
\[
\pi_*(E_n) = \pi_0(E_n)[u^{\pm 1}]
\quad\text{and}\quad
\pi_*(K(n)) = \pi_0(K(n))[v_n^{\pm 1}].
\]
Multiplication by $u$ and $v_n$ induces equivalences
\[
E_n \cong \suspension^2 E_n
\quad\text{and}\quad
K(n) \cong \suspension^{2(p^n-1)} K(n),
\]
respectively.

This spectra are complex oriented. The most important feature of these spectra is their associated Quillen formal group. We have
\[
\pi_*K(n)^ {\complexprojective} = \pi_*(K(n))[[t]] = \finitefield_q[v_n^{\pm 1}][[t]]
\quad\text{and}\quad
\pi_*E_n^{\complexprojective} = \pi_*(E_n)[[t]].
\]
Both carry Hopf algebra structures induced by the group structure on $\complexprojective$. It follows that
\[
\spf \pi_0K(n)^{\complexprojective}
\quad\text{and}\quad
\spf \pi_0E_n^{\complexprojective}
\]
are formal groups. The formal group $\spf K(n)^* \complexprojective$ is the original formal group, while $\spf E_n^* \complexprojective$ is its universal deformation. The $K(n)$-local category $\spectra_{K(n)}$ is the full subcategory of spectra spanned by objects that receive no nonzero maps from $K(n)$-acyclic spectra.

Higher semiadditivity yields a notion of integration over $\pi$-finite spaces for $K(n)$-local spectra. Moreover, the $K(n)$-local category is $\infty$-semiadditively symmetric monoidal. This implies a distributivity relation between multiplication and integration on $K(n)$-local ring spectra. This distributivity follows from the fact that the (completed) tensor product on $\spectra_{K(n)}$ preserves colimits indexed by $\pi$-finite spaces, as well as all other colimits.

In \cite{CSY1}, Carmeli, Schlank, and Yanovski define a power operation for commutative ring spectra in $\infty$-semiadditively symmetric monoidal $\infty$-categories. They use this operation to show that the $T(n)$-local category is also $\infty$-semiadditively symmetric monoidal. In Section~\ref{sec:ambyanddelta}, we give a brief introduction to higher semiadditivity, focusing on definitions, and we recall the definition of the $\delta$ operation.


The goal of this project is to compute the $\delta$ operation on the ring spectrum $E_2$ in the category $\tmnt{2}$ of $K(2)$-local $E_2$-modules. We work at the prime $3$, which is the smallest prime for which $\Sigma_p \neq C_p$. When $C_p = \Sigma_p$, some of our methods degenerate, since the $C_p$ power operation coincides with the usual total power operation. We expect the methods developed here to work at any prime.

In Sections~\ref{sec:powerop} and~\ref{sec:charoftoalpower}, we recall the previous results on which our calculation relies.


The first main result we use is Zhu's computation in \cite{ZHU} of the additive total power operation at height $2$. Zhu gives an explicit formula for a ring map
\[
\psi^3 \colon \pi_0(E) \to \pi_0(E^{B\Sigma_p})/T,
\]
where $T$ is the transfer ideal. In Section~\ref{sec:powerop}, we recall this result and the related work of Strickland and Rezk on which it depends. 

The second tool is a special case of the computation of Barthel and Stapleton in \cite{BS} of the character of the power operation. This work computes the rationalization of the total power operation
\[
\widetilde{\psi^3} \colon \pi_0(E) \to \pi_0(E^{B\Sigma_p}).
\]
In Section~\ref{sec:charoftoalpower}, we recall this result and briefly survey HKR character theory.

In the final section, Section~\ref{sec:computation}, we assemble the results described above and complete the computation.

\subsection{Statement of the main result}

Let $\finitefield_9 = \finitefield_3[c]/(c^2+1)$. Let $C$ be the elliptic curve over $\finitefield_9$ defined by the equation
\[
y^2 + xy - y = x^3 - x^2.
\]
The curve $C$ is supersingular. Hence its formal completion $\widehat{C_e}$ is a formal group of height $2$. We take the associated Morava $E$-theory $E$.

\begin{proposition}\cite[Section~2.1]{ZHU}\label{prop:compofE0}
The ring $\pi_0(E)$ is the universal deformation ring of $C$. There is an isomorphism
\[
\pi_0(E) \cong  \integers_3[[h]][c]/(c^2+1-h).,
\]
$h$ corresponds to the Hesse invariant, whose vanishing characterizes when the elliptic curve is supersingular.
\end{proposition}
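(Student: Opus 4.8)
The plan is to identify $\pi_0(E)$ with the universal deformation ring of the elliptic curve $C$ using Lubin--Tate and Serre--Tate theory, to make that ring explicit via the point of order $4$ on $C$, and then to recognize a lift of the Hasse invariant as a regular parameter. For the first step, recall that by the Goerss--Hopkins--Miller theorem together with the Lubin--Tate deformation theorem, $E$ is an $\eoperad_\infty$-ring with $\pi_*(E) = \pi_0(E)[u^{\pm 1}]$, and $\pi_0(E)$ is the universal deformation ring of the height-$2$ formal group $\widehat{C_e}$ over $\finitefield_9$; in particular it is a complete regular local ring of Krull dimension $2$ with residue field $\finitefield_9$, abstractly isomorphic to $\mathbb{W}(\finitefield_9)[[u_1]]$, where $\mathbb{W}(\finitefield_9) = \integers_3[\sqrt{-1}]$ is the unramified quadratic extension of $\integers_3$ (note that $x^2+1$ stays irreducible since $-1$ is not a square in $\finitefield_3$). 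Since $C$ is supersingular, its $3$-divisible group is connected and agrees with $\widehat{C_e}$, so by the Serre--Tate theorem the deformation functor of $C$ as an elliptic curve over local Artinian $\mathbb{W}(\finitefield_9)$-algebras is equivalent to that of $\widehat{C_e}$. Hence $\pi_0(E)$ is also the universal deformation ring of $C$, and there is a universal deformation $\mathcal{C}$ of $C$ over $\pi_0(E)$.

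Next I would make $\pi_0(E)$ explicit. The equation $y^2+xy-y = x^3-x^2$ is the Tate normal form $E_{b,c}\colon y^2 + (1-c)xy - by = x^3 - bx^2$ with $b=1$ and $c=0$, and $P=(0,0)$ has exact order $4$ on it: $b\neq 0$ forces $2P\neq O$, while $c=0$ forces the order of $P$ to divide $4$. Thus $(C,P)$ is an $\finitefield_9$-point of the moduli scheme $\mathcal{M}_{\Gamma_1(4)}$ of elliptic curves with a point of exact order $4$; this is a smooth affine curve over $\integers[1/4]$, carrying the universal curve $E_{b,0}\colon y^2+xy-by = x^3-bx^2$ of discriminant $\Delta = b^4(16b+1)$, and the level-$4$ structure rigidifies — there are no nontrivial automorphisms of a pair $(E,P)$ with $P$ of order $4$, even at supersingular points, since $\phi(P)=P$ with $\phi\neq 1$ would force $4\mid\deg(\phi-1) = 2-\tr(\phi)\leq 4$, hence $\phi=-1$ and $-2P=0$, impossible. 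Because $P$ has exact order $4$ it is étale, hence deforms uniquely, so the universal deformation ring of $C$ is the completed local ring of $\mathcal{M}_{\Gamma_1(4)}\times_{\integers}\mathbb{W}(\finitefield_9)$ at $(C,P)$. Reducing mod $3$ one has $c_4(E_{b,0}) = 1+16b+16b^2 \equiv (b-1)^2$ and $\Delta \equiv b^4(b+1)$, so $b=1$ is the unique supersingular point, $\Delta$ is a unit there, and $b$ is an étale coordinate; hence this completed local ring is $\mathbb{W}(\finitefield_9)[[t]]$ with $t = b-1$.

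It remains to identify $t$, up to units, with the Hasse invariant and to rewrite the ring. The Hasse invariant $A$ of a characteristic-$3$ elliptic curve is a nonzero section of $\omega^{\otimes 2}$ with $q$-expansion $1$ at the cusp, and by the $q$-expansion principle $A^2$ equals $c_4 \bmod 3$; trivializing $\omega$ by the invariant differential of the chosen Weierstrass model, the identity $c_4(E_{b,0})\equiv(b-1)^2 \bmod 3$ gives $A \equiv b-1$ up to a unit, so $A$ has a simple zero at $(C,P)$. Therefore any lift $h\in\pi_0(E)$ of the Hasse invariant of $\mathcal{C}$ is a regular parameter, and $\pi_0(E) = \mathbb{W}(\finitefield_9)[[h]]$. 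To reach the stated form, write $\mathbb{W}(\finitefield_9) = \integers_3[i]$ with $i^2=-1$; since $2$ is a unit, the equation $s^2 = 1-h$ has a solution in $\integers_3[[h]]$ (determine its coefficients recursively, dividing only by $2$), and this $\sqrt{1-h}$ is a unit, so $c := i\sqrt{1-h}$ satisfies $c^2 = -(1-h) = h-1$ and $\integers_3[[h]][c] = \integers_3[i][[h]] = \mathbb{W}(\finitefield_9)[[h]]$. Comparing ranks over $\integers_3[[h]]$, the surjection $\integers_3[[h]][x]/(x^2+1-h)\twoheadrightarrow\integers_3[[h]][c]$ sending $x\mapsto c$ is an isomorphism, giving $\pi_0(E)\cong\integers_3[[h]][c]/(c^2+1-h)$ with $c$ reducing mod $(3,h)$ to the fixed generator of $\finitefield_9$.

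I expect the last step to be the main obstacle. Its delicate point is that ``the Hasse invariant'' is not canonical as an element of the characteristic-zero ring $\pi_0(E)$ — only its reduction mod $3$ is — so one must fix a normalization (a Weierstrass model for $\mathcal{C}$ and a trivialization of $\omega$) and, crucially, verify that the resulting element has a simple zero; this is exactly where the mod-$3$ identity $c_4\equiv(b-1)^2$ and the uniqueness of the supersingular $j$-invariant in characteristic $3$ enter. One must also handle carefully the passage between the moduli stack $\mathcal{M}_{\mathrm{ell}}$, where $[C]$ is stacky (its automorphism group has order $12$ in characteristic $3$), and the rigid level-$4$ cover, as well as the base change from $\integers_3$ to $\mathbb{W}(\finitefield_9)$ that is implicit in working over $\finitefield_9$ rather than $\finitefield_3$.
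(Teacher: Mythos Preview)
The paper does not prove this proposition; it simply attributes the statement to \cite[Section~2.1]{ZHU} and moves on. Your argument is correct and in fact reconstructs precisely the approach Zhu takes: use Serre--Tate to pass from deformations of the formal group to deformations of the supersingular curve, rigidify via the $\Gamma_1(4)$-structure already present in the chosen Weierstrass model $y^2+xy-y=x^3-x^2$ (the point $(0,0)$ has exact order $4$), and then read off a regular parameter from the Hasse invariant. The paper later uses exactly your normalization, describing $c$ as ``the square root of $h-1$ in $\pi_0 E$ which is congruent to $i$ modulo $(h,3)$'' (see Theorem~\ref{thm:mainzhu}), so your identification $c=i\sqrt{1-h}$ and the resulting isomorphism $\mathbb{W}(\finitefield_9)[[h]]\cong\integers_3[[h]][c]/(c^2+1-h)$ are the intended ones. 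The only place to be a bit more careful is the automorphism argument: your inequality $\deg(\phi-1)\le 4$ is really the observation that for an automorphism $\phi$ one has $\deg(\phi-1)=2-\tr(\phi)\in\{0,1,2,3,4\}$, so divisibility by $4$ forces $\phi=1$ or $\phi=-1$; this is what you mean, but as written the logic is a bit compressed.
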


\begin{remark}
    Modulo the maximal ideal $(3,h)$, the element $c \in \pi_0(E)$ agrees with $c \in \finitefield_9$.
\end{remark}

The main theorem of this work is the following.

\begin{theorem}(Theorem~\ref{thm:computationofdelta})
Let $B \in M_8(\pi_0(E))$ be the matrix defined in Notation~\ref{notation:ugly_stuff}. Let $x \in \pi_0(E) =\integers_3[[h]][c]/(c^2+1-h)$, regarded as a formal power series in the variable $h$, and let $x(B)$ denote the matrix-valued series obtained by substituting $h = B$ in the power series $x$. This series converges in the $(3,h)$-adic topology to a matrix $x(B) \in M_8(\pi_0(E))$. The operation $\delta$ is given by
\[
\delta(x) = 3x - \frac{x^3 + \tr x(B)}{3}.
\]
\end{theorem}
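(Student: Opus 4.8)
The plan is to reduce the abstract definition of $\delta$ recalled in Section~\ref{sec:ambyanddelta} to a trace formula and then substitute Zhu's explicit answer. Recall that, for $E$ regarded as a commutative algebra in $\tmnt{2}$, the operation $\delta$ is built from the $C_p$-power operation $P_{C_p}\colon\pi_0(E)\to\pi_0(E^{BC_p})$ together with the semiadditive integral $\int_{BC_p}\colon\pi_0(E^{BC_p})\to\pi_0(E)$ afforded by higher semiadditivity of $\tmnt{2}$, with the division by $p$ built into the definition so as to guarantee integrality. First I would rewrite $p\,\delta(x)$ — equivalently the associated Frobenius lift $\psi(x):=x^p+p\,\delta(x)$ — purely in terms of $P_{C_p}(x)$, $x^p$, and $\int_{BC_p}$. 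The transfer ideal $T$ is precisely the part of $\pi_0(E^{BC_p})$ annihilated by the integral, so $\int_{BC_p}$ descends to a $\pi_0(E)$-linear map on $A:=\pi_0(E^{BC_p})/T$, which one identifies, up to a normalization constant that must be tracked, with the module trace of the finite free extension $A/\pi_0(E)$. This should yield $\psi(x)=9x-\tr_{A/\pi_0(E)}P_{C_3}(x)$, the coefficient $9$ being the relevant power of $3$ coming from the semiadditive cardinality of $BC_3$; as a check this forces $\psi$ to restrict to the identity on $\integers_3\subseteq\pi_0(E)$ and $\delta$ to vanish there, and rearranges to $\delta(x)=3x-\tfrac13\bigl(x^3+\tr_{A/\pi_0(E)}P_{C_3}(x)\bigr)$.

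Next I would make $A$ and the trace completely explicit, using Strickland's theorem and Rezk's work from Section~\ref{sec:powerop}. Since the formal group of the supersingular curve from Proposition~\ref{prop:compofE0} has height $2$, Weierstrass preparation gives $[3]_{\mathbb G}(x)=x\,g(x)$ with $g$ a distinguished polynomial of degree $3^2-1=8$; the transfer ideal is $T=(g(x))$ inside $\pi_0(E^{BC_3})=\pi_0(E)[[x]]/([3]_{\mathbb G}(x))$, and hence $A\cong\pi_0(E)[[x]]/(g(x))$ is free of rank $8$ over $\pi_0(E)$. Because $P_{C_3}$ is a ring homomorphism, for $x=x(h)\in\integers_3[[h]]\subseteq\pi_0(E)$ we get $P_{C_3}(x)=x(P_{C_3}(h))$ in $A$, and multiplication by this element, in the monomial basis $1,x,\dots,x^7$, is represented by the matrix $x(B)$, where $B\in M_8(\pi_0(E))$ is the matrix of multiplication by $P_{C_3}(h)$ — this is the matrix of Notation~\ref{notation:ugly_stuff}. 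Convergence of $x(B)$ in the $(3,h)$-adic topology reduces to the fact that $\overline{P_{C_3}(h)}$ is nilpotent in $A/(3,h)\cong\finitefield_9[x]/(x^8)$, which holds because $P_{C_3}$ reduces to the Frobenius modulo the maximal ideal while $\overline h=0$ in $\finitefield_9$. The general element $x=a(h)+b(h)c$ is handled the same way once $P_{C_3}(c)$ is recorded.

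The matrix $B$ itself comes from Zhu: his computation in \cite{ZHU} is an explicit ring map $\psi^3\colon\pi_0(E)\to\pi_0(E^{B\Sigma_3})/T$, and restriction along $BC_3\to B\Sigma_3$ — which realizes $A$ as free of rank $2$ over Strickland's rank-$4$ ring $\pi_0(E^{B\Sigma_3})/T$, corresponding to the choice of a generator of the universal order-$3$ subgroup — produces $P_{C_3}$, hence $P_{C_3}(h)$ and $B$, in closed form, and lets one evaluate $\tr_{A/\pi_0(E)}P_{C_3}(x)=\tr x(B)$ (on constants this reads $\tr x(B)=2\sum_{H}\psi^3_H(x)$, twice the sum over the four order-$3$ subgroups). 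I would then invoke the HKR-character computation of Barthel and Stapleton from \cite{BS} (Section~\ref{sec:charoftoalpower}) both to pin down the remaining normalization (the constant relating $\int_{BC_3}$ to the trace, and thereby the numerical coefficients in the formula) and to verify the integrality statement $x^3+\tr x(B)\in 3\,\pi_0(E)$, which is what makes $\delta(x)=3x-\tfrac13\bigl(x^3+\tr x(B)\bigr)$ land in $\pi_0(E)$. Assembling these yields the theorem.

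The main obstacle I expect is the first step together with the $\Sigma_3$-versus-$C_3$ bookkeeping: translating the abstract semiadditive definition of $\delta$ — phrased via $BC_p$, the integral, and the $C_p$-power operation — into the $E_\infty$-operadic setting of Zhu's calculation — phrased via $B\Sigma_3$, the $E_\infty$-ring structure, and Strickland's transfer ideal — while keeping every sign, unit, and normalization constant straight, and confirming that the two a priori different power operations do agree after the evident restriction map. Once Zhu's formulas and the matrix $B$ are in hand, the convergence of $x(B)$, the trace identity, and the remaining matrix manipulations are comparatively routine.
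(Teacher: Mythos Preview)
Your overall architecture matches the paper's: express $\alpha(x)=\tfrac1p\,\tr_{\pi_0E^{BC_p}/\pi_0E}\bigl(\widetilde{\psi^{C_p}}(x)\bigr)$ (Propositions~\ref{prop:trasspan} and~\ref{prop:alphaistraceoftotalpo}), compute that trace using Zhu's explicit description of $\pi_0E^{BC_p}/T_{C_p}$ and of $\eta\circ\psi^3$, and then use $\lvert BC_3\rvert=3$. But the step in which you pass from the full $9$-dimensional trace to the $8$-dimensional one is wrong, and if followed literally would not produce the formula you claim.

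Your assertion that ``the transfer ideal $T$ is precisely the part of $\pi_0(E^{BC_p})$ annihilated by the integral, so $\int_{BC_p}$ descends to $A=\pi_0E^{BC_p}/T$'' is false. Rationally one has a splitting
\[
\pi_0E^{BC_p}\otimes\rationals\ \cong\ (\pi_0E\otimes\rationals)\ \times\ \bigl(\pi_0E^{BC_p}/T_{C_p}\otimes\rationals\bigr),
\]
with $T_{C_p}$ corresponding to the \emph{first}, rank-$1$ factor (this is the $C_p$-analogue of Proposition~\ref{prop:transideal} and Theorem~\ref{thm:charactercomputation}). The trace of multiplication on a rank-$1$ free module is the identity, so the integral $\tfrac1p\tr$ does \emph{not} vanish on $T_{C_p}$ and does \emph{not} factor through $A$. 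If you run your mechanism as stated you obtain $\alpha(x)=\tfrac13\,\tr_A P_{C_3}(x)$, hence $\psi(x)=x^3+9x-\tr_A P_{C_3}(x)$, not the $9x-\tr_A$ you write.

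What actually makes the $x^3$ appear is that, under the rational splitting above, the \emph{non-additive} total power operation $\widetilde{\psi^{C_p}}(x)$ has components $(x^p,\ \psi^{C_p}(x))$: the first component being $x\mapsto x^p$ is precisely the content of Theorem~\ref{thm:charactercomputation} (transported from $\Sigma_p$ to $C_p$ along $\eta$), and this is where the Barthel--Stapleton character computation enters essentially, not merely to fix a normalization constant. The $9$-dimensional trace then decomposes as $x^p+\tr_A\bigl(\psi^{C_p}(x)\bigr)=x^3+\tr x(B)$, giving $\alpha(x)=\tfrac13\bigl(x^3+\tr x(B)\bigr)$ and hence the theorem. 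In short: the $x^3$ in the formula is the contribution of the rank-$1$ factor you proposed to discard, and only \emph{after} that contribution is correctly accounted for does your Frobenius-lift rearrangement $\psi(x)=9x-\tr x(B)$ hold.
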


\begin{remark}
Although the formula involves elements of $\pi_0(E)$ that are not in $\integers[h]$, such as $c = \sqrt{h-1}$ and $\frac{1}{h-17}$ appearing in the coefficients of $B$, the trace of $B$ and of all its powers is polynomial in $h$ with integer coefficients. For example, $\delta(h)$ is the polynomial
\[
-h^{3} + 18h^{2} - 119h + 102.
\]

This phenomenon suggests that $\delta$ should admit an integral definition, and possibly a geometric interpretation. We do not pursue this question here. It is indeed explained in \cite{STAPLETON} that some closely reltated operation agrees with the Hecke operator $T_p$.
\end{remark}

\subsubsection*{Acknowledgements}
This work was produced while the author was a student in the Hebrew university of Jerusalem, Israel under the supervision of Prof. Tomer Moshe Schlanck.

Special thanks to the Seminarak research group in Jerusalem for help, support and many fun hours of studying together.

We wish to thank Shay Ben Moshe and Yuval Lotenberg for reading drafts of this work. Thanks to Nathaniel Stapleton for reviewing this work and providing helpful remarks. 

\section{Ambidextirety and the definition of $\delta$}\label{sec:ambyanddelta}

Let $\cat{C}$ be a category with small limits and colimits. We say that $\cat{C}$ is $-2$ semiadditive without any requirements. 

Suppose we have a $-2$ semiadditive category, then any object $X\in \cat{C}$ is represented by a functor $\explicitset{*} \rightarrow \cat{C}$.
We have a canonical map $\nm_{\explicitset{*}}:X = \colim_{\explicitset{*}} X \rightarrow \lim_{\explicitset{*}} X = X$ which is homotopic to the identity. We also have a canonical map, homotopic to the identity as well, given by the composition \[\int_{\explicitset{*}}\id_{X} : X \xrightarrow{\Delta} \lim_{\explicitset{*}} X \xrightarrow{\nm_{\explicitset{*}}^{-1}} \colim_{\explicitset{*}} X \xrightarrow{\nabla} X.\]

If $\cat{C}$ is $-2$ semiadditive then there is a canonical map $\nm_{\emptyset}: \emptyset \rightarrow *$ from the initial object to the terminal object obtained from their universal properties.
We say that $\cat{C}$ is $-1$  semiadditive if $\nm_{\emptyset}$ is an isomorphism. Such a category is also occasionally called "pointed" in the literature.

Suppose we have a $-1$ semiadditive category $\cat{C}$. This allows to define a special morphism between any two objects $X,Y\in C$ called the zero map, defined as \[\int_{\emptyset} : X\rightarrow * \xrightarrow{\sim} \emptyset \rightarrow Y.\]

Let $\mathcal{C}$ be $-1$ and let $A$ be a finite set. One defines for any $A$ indexed collection of objects  $X_a\in \cat{C}$ for $a\in A$, a natural map $\coprod_{a\in A} X_a \rightarrow \prod_{a\in A} X_a $. Giving such a map is the same as giving a matrix indexed by $A^2$ of morphisms $X_a\to X_b$ for $(a,b)\in A^2$. For this matrix we take the identity matrix \[\parenth{\delta^i_j}_{(i,j)\in A^2}, \delta^i_j = \begin{cases}\int_{\explicitset{*}}\id_{X} & i=j \\ \int_\emptyset & i\neq j \end{cases}\]
and obtain a map which we will call $\nm_A$. 

We say that $\cat{C}$ is $0$-semiadditive (or just semiadditive) if for any finite set $A$, $\nm_A$ is a natural isomorphism. This enables one to define an integration map $\int_A$ which takes a map $\phi:A\rightarrow \hom(X,Y)$ and returns $\int_A\phi:X\rightarrow Y$ defined by the composition \[X\xrightarrow{\Delta} \prod_{a\in A} X \xrightarrow{\prod_{a\in A} \phi(a)} \prod_{a\in A} Y \xrightarrow{\nm^{-1}} \coprod_{a\in A} Y \xrightarrow{\nabla} Y. \]

This integration over finite sets endows the mapping spaces in $\cat{C}$ with a structure of a commutative monoid where the unit is $\int_{\emptyset}$. This structure allows to define, given an action of a finite group $G$ on an object $X \in \cat{C}$, a morphism $\nm_{BG}:X_{G}\rightarrow X^{G}$. A category where all norm maps $\nm_{BG}$ are equivalances is called $1$ semiadditive.

This story is part of an inductive sequence of definitions one can make. Given an $(n-1)$-semiadditive category, one has integration over $(n-1)$-finite spaces, which allows one to define an $n$-norm map on $n$-finite spaces and then define a notion of an $n$-semiadditive category, proceeding ad infinitum. We will explicate on this in subsection \S \ref{suse:inftysemiadd}.

Given a $1$ semiadditive symmetric monoidal category $\cat{C}$, a commutative coalgebra $C$ and a commutative algebra $A$ one can define a natural transformation $\alpha: \pi_0 \map(C,A)\rightarrow \pi_0 \map(C,A)$ which is dependent on both the algebraic structures on $A,C$ and on the notion of higher semiadditivity of $\cat{C}$. Stating this definition and going over some of $\alpha$'s basic properties is the goal of subsection \S\ref{suse:thealphaop}.

Another thing one can do with an $\infty$-semiadditive category is to define a notion of cardinality of $\pi$-finite spaces. Given a $\pi$-finite space $A$ and an object $X\in \cat{C}$, one can define $ X^A= \lim_{A}X, X_A = \colim_{A}X$. Then we get a natural transformation of the identity functor in $\cat{C}$ by the composition $\cardinality{A}_X = \int_A \id_X: X\rightarrow X^A\xrightarrow{\nm_A^{-1}} X_A\rightarrow X$. We call this natural transformation the \emph{cardinality} of $A$. In \S \ref{suse:cardanddelta} we will give a definition of this notion and combine it with the $\alpha$ operation to get a power operation. This is the $\delta$ operation.

\subsection{$\infty$-semiadditivity}\label{suse:inftysemiadd}

\begin{definition}
A space $A$ is called $m$-finite if it has finitely many connected components, each with finite homotopy groups, and if $\pi_n(A)=0$ for all $n>m$. A space is called $\pi$-finite if it is $m$-finite for some $m \geq -2$.
\end{definition}

Thus a $(-2)$-finite space is contractible, under the convention that $\pi_{-1}$ is empty if the space is empty and a singleton if the space is inhabited. A $(-1)$-finite space is either contractible or empty. A $0$-finite space is a finite disjoint union of contractible spaces. A $1$-finite space is a finite disjoint union of Eilenberg--MacLane classifying spaces $BG$ of finite groups.

\begin{definition}
A map of spaces $f \colon A \to B$ is called $m$-finite if all of its fibres are $m$-finite.
\end{definition}

Thus a $(-2)$-finite map is a homotopy equivalence, a $(-1)$-finite map is an inclusion of connected components, and a $0$-finite map is a finite covering map.

\begin{proposition}\label{prop:2-3}
Let $X \to Y \to Z$ be a fibre sequence of spaces with $Y$ and $Z$ connected. If both $Y$ and $Z$ are $m$-finite, then so is $X$.  If both $X$ and $Z$ are $m$-finite, then so is $Y$. 
\end{proposition}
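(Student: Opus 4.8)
\medskip
\noindent\emph{Proof strategy.} The plan is to extract everything from the long exact sequence of homotopy groups (degenerating to an exact sequence of pointed sets in the lowest degrees) attached to the fibre sequence $X\to Y\to Z$, combined with two elementary facts: finite groups are closed under passage to subgroups, quotients, and extensions, and finite sets are closed under taking quotients. The one preliminary worth stating is the unwinding of the definition: a space $A$ is $m$-finite exactly when $\pi_0(A)$ is finite, every homotopy group $\pi_k(A,a)$ (for every basepoint $a$ and every $k\geq 1$) is finite, and $\pi_k(A,a)=0$ for $k>m$. In particular \emph{all} homotopy groups of an $m$-finite space are finite, so the ranges $k\leq m$ and $k>m$ can be treated uniformly and the degenerate cases $m\leq 0$ need no separate discussion.

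For the first assertion, fix a basepoint $z_0\in Z$, model $X$ as the fibre over $z_0$, and for an arbitrary point $x\in X$ (with image still written $x$ in $Y$) consider the exact sequence $\pi_{k+1}(Z,z_0)\to\pi_k(X,x)\to\pi_k(Y,x)$. It displays $\pi_k(X,x)$ as an extension of a subgroup of $\pi_k(Y,x)$ by a quotient of $\pi_{k+1}(Z,z_0)$; since all homotopy groups of $Y$ and of $Z$ are finite, $\pi_k(X,x)$ is finite for every $k\geq 1$, and when $k>m$ both flanking groups vanish, so $\pi_k(X,x)=0$. Finiteness of $\pi_0(X)$ comes from the tail $\pi_1(Z,z_0)\to\pi_0(X)\to\pi_0(Y)$: as $Y$ is connected, $\pi_0(Y)$ is a point, so the connecting map out of the finite set $\pi_1(Z,z_0)$ is surjective onto $\pi_0(X)$. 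Because the exact sequence above is written at the \emph{arbitrary} point $x$, this simultaneously covers every connected component of the (possibly disconnected) fibre $X$.

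For the second assertion, $Y$ is connected by hypothesis, hence $\pi_0(Y)$ is a point and in particular finite. First one checks that $X$ is nonempty: any point of $Y$ maps to some point of $Z$, and lifting a path in $Z$ from that point to $z_0$ produces a point of the fibre. Choosing $y\in Y$ that lies in $X$, the exact sequence $\pi_k(X,y)\to\pi_k(Y,y)\to\pi_k(Z,z_0)$ exhibits $\pi_k(Y,y)$ as an extension of a subgroup of $\pi_k(Z,z_0)$ by a quotient of $\pi_k(X,y)$; both are finite since $X$ and $Z$ are $m$-finite, so $\pi_k(Y,y)$ is finite, and for $k>m$ both ends vanish, giving $\pi_k(Y,y)=0$. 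As $Y$ is connected this exhausts what must be proved.

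The individual steps are routine; the only places needing care are the bookkeeping at the bottom of the exact sequence, where one juggles pointed sets, possibly nonabelian fundamental groups, and the action of $\pi_1(Z)$ on $\pi_0$ of the fibre, and the observation that the fibre $X$ in the first assertion may fail to be connected, which is why the argument must be run with a variable basepoint. I do not anticipate any genuine obstacle beyond this.
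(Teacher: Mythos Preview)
Your proposal is correct and follows essentially the same approach as the paper, which simply states that the result follows from the exactness of the Puppe sequence; you have just spelled out the details of that argument carefully, including the bookkeeping at the bottom of the long exact sequence and the variable-basepoint issue for the possibly disconnected fibre.
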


\begin{proof}
This follows from the exactness of the Puppe sequence.
\end{proof}
\begin{corollary}\label{cor:closedunderpb}
$m$-finite spaces are closed under pullbacks.
\end{corollary}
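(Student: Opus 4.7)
The plan is to reduce the statement to two applications of Proposition~\ref{prop:2-3}. Given a pullback of $m$-finite spaces with maps $f\colon X \to Z$ and $g\colon Y \to Z$, I will first show that each fibre of $f$ is $m$-finite, and then use this to analyse the projection $X \times_Z Y \to Y$.

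For the first step, fix $z \in Z$ and decompose $X$ into its finitely many connected components $X_i$. The fibre $f^{-1}(z)$ is the finite disjoint union $\bigsqcup_i (f|_{X_i})^{-1}(z)$, and only those components mapping into the connected component $Z_0$ of $Z$ containing $z$ contribute. For each such component the restriction sits in a fibre sequence $(f|_{X_i})^{-1}(z) \to X_i \to Z_0$ with both base and total space connected and $m$-finite, so Proposition~\ref{prop:2-3} forces the fibre to be $m$-finite. Hence $f^{-1}(z)$ is a finite disjoint union of $m$-finite spaces, and therefore $m$-finite.

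For the second step, let $p\colon X \times_Z Y \to Y$ be the projection; its fibre over $y$ is $f^{-1}(g(y))$, which is $m$-finite by the previous step. Decomposing $Y$ into connected components $Y_\alpha$ reduces the claim to showing that each $p^{-1}(Y_\alpha)$ is $m$-finite. This space fibres over the connected $m$-finite space $Y_\alpha$ with $m$-finite fibre $F$, and its connected components correspond to the finitely many orbits of the monodromy action of $\pi_1(Y_\alpha)$ on $\pi_0(F)$. For each such component $W$, applying Proposition~\ref{prop:2-3} to the fibre sequence whose base is $Y_\alpha$, total space $W$, and fibre the $m$-finite union of components of $F$ lying over $W$, yields that $W$ is $m$-finite. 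Assembling finitely many such components proves the corollary.

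The main obstacle is purely bookkeeping around connected components, since Proposition~\ref{prop:2-3} assumes both its base and total space are connected while $m$-finite spaces may have several components. Once everything is reduced to the connected setting this way, the corollary follows immediately from Proposition~\ref{prop:2-3}.
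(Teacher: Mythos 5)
Your proof is correct and follows essentially the same route as the paper's: identify each fibre of the pullback projection with a fibre of the given map $X \to Z$, show those fibres are $m$-finite by the first half of Proposition~\ref{prop:2-3}, and then apply the second half componentwise over the base. The only difference is that you spell out the connected-component bookkeeping (including the monodromy description of $\pi_0$ of the pullback) that the paper's proof leaves implicit.
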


\begin{proof}
Let
\[
\begin{tikzcd}
	X & Y \\
	Z & W
	\arrow[from=1-2, to=2-2]
	\arrow[from=1-1, to=2-1]
	\arrow[from=1-1, to=1-2]
	\arrow[from=2-1, to=2-2]
	\arrow["\lrcorner"{anchor=center, pos=0.125}, draw=none, from=1-1, to=2-2]
\end{tikzcd}
\]
be a pullback square of spaces, and assume that $Y$, $Z$, and $W$ are $m$-finite. By Proposition~\ref{prop:2-3}, each fibre of the map $Z \to W$ is $m$-finite. Each fibre of $X \to Y$ is canonically homotopy equivalent to a finite disjoint union of fibres of $Z \to W$, and is therefore $m$-finite. Applying Proposition~\ref{prop:2-3} to each connected component of $X$ shows that $X$ is $m$-finite.
\end{proof}
\begin{lemma}\label{lemma:diagismorefinite}
Let $f \colon A \to B$ be an $m$-finite map. Then its diagonal
\[
\Delta_f \colon A \to A \times_B A,
\]
defined by the diagram
\[
\begin{tikzcd}
	A \\
	& {A\times_B A} & A \\
	& A & B
	\arrow["f", from=2-3, to=3-3]
	\arrow["f"', from=3-2, to=3-3]
	\arrow["{\pi_2}"', from=2-2, to=3-2]
	\arrow["{\pi_1}", from=2-2, to=2-3]
	\arrow[curve={height=-12pt}, from=1-1, to=2-3, equal]
	\arrow[curve={height=12pt}, from=1-1, to=3-2, equal]
	\arrow["{\Delta_f}"{description}, from=1-1, to=2-2]
\end{tikzcd}
\]
is $(m-1)$-finite.
\end{lemma}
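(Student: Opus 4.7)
The plan is to identify the fibers of $\Delta_f$ and then drop the finiteness level by one using Proposition~\ref{prop:2-3}. Fix a point $(a_1,a_2) \in A \times_B A$, and let $b = f(a_1) = f(a_2)$. Standard manipulation of pullbacks shows that the fiber of $\Delta_f$ over $(a_1,a_2)$ is the space of paths in $A$ from $a_1$ to $a_2$ whose image under $f$ is constant at $b$, i.e.\ the path space $P_{a_1,a_2}(A_b)$ inside the fiber $A_b = f^{-1}(b)$. So it suffices to show that for an $m$-finite space $F$ and any two points $x_1, x_2 \in F$, the path space $P_{x_1,x_2}(F)$ is $(m-1)$-finite.

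If $x_1$ and $x_2$ lie in different components of $F$, the path space is empty, hence $(-1)$-finite and a fortiori $(m-1)$-finite whenever $m \geq 0$. (For $m = -2$ the statement is vacuous since $f$ is then an equivalence and $\Delta_f$ is too.) Otherwise, choosing a path from $x_1$ to $x_2$ gives an equivalence $P_{x_1,x_2}(F) \simeq \Omega_{x_1} F$. Thus the task reduces to showing that $\Omega_{x_1} F$ is $(m-1)$-finite.

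For this I would invoke the fiber sequence $\Omega_{x_1} F \to \{x_1\} \to F_0$, where $F_0$ is the connected component of $x_1$. Both $\{x_1\}$ and $F_0$ are connected, the former is $(-2)$-finite hence $m$-finite for any $m$, and $F_0$ is $m$-finite since it is a connected component of the $m$-finite space $F$. Proposition~\ref{prop:2-3} then gives that $\Omega_{x_1} F$ is $m$-finite. To sharpen this to $(m-1)$-finite, observe that $\pi_k(\Omega_{x_1} F) = \pi_{k+1}(F, x_1)$, which vanishes for $k > m-1$ because $F$ is $m$-finite, while $\pi_0(\Omega_{x_1} F) = \pi_1(F, x_1)$ is finite. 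Combined with Proposition~\ref{prop:2-3} giving finiteness of all homotopy groups, this yields $(m-1)$-finiteness.

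The main subtlety, rather than a genuine obstacle, is making the identification of the fiber of $\Delta_f$ rigorous; it is a formal manipulation of iterated pullbacks but worth spelling out, since in the $\infty$-categorical setting the diagonal of $f$ is exactly the universal map realizing the path space in the fiber. Once this identification is in place, the rest is a routine bookkeeping of homotopy groups together with one application of Proposition~\ref{prop:2-3}.
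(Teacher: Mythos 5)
Your argument is correct, but it takes a different route from the paper. The paper's proof is a two-line citation: by \cite[Proposition~5.5.6.15]{HTT} the diagonal of an $m$-truncated map is $(m-1)$-truncated, and since $\Delta_f$ admits a retraction onto $A$ by the $m$-finite projection $\pi_1$, its fibres are also $m$-finite (via Corollary~\ref{cor:closedunderpb} and Proposition~\ref{prop:2-3}); combining truncatedness with $m$-finiteness gives $(m-1)$-finiteness. You instead prove both facts by hand: you identify the fibres of $\Delta_f$ as path spaces inside the fibres of $f$, reduce to loop spaces, and read off $\pi_k(\Omega_{x_1}F)=\pi_{k+1}(F,x_1)$. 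This is essentially a self-contained proof of the relevant case of the HTT proposition, which is a legitimate trade: you avoid the external reference at the cost of having to make the fibre identification rigorous. Note that this identification is the one place you should be careful: in the $\infty$-categorical setting a point of $A\times_B A$ is a triple $(a_1,a_2,\gamma)$ with $\gamma$ a path $f(a_1)\simeq f(a_2)$ in $B$, and the fibre of $\Delta_f$ over it is the space of paths $a_1\to a_2$ in $A$ lying over $\gamma$, i.e.\ the fibre of $\map_{A}(a_1,a_2)\to\map_B(f a_1,f a_2)$ over $\gamma$; the long fibre sequence for $A_b\to A\to B$ identifies this with a path space in the fibre $A_b$, after which your loop-space bookkeeping goes through. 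You correctly flag this step rather than eliding it, and your edge cases ($m=-2$, empty path space, $m=0$) check out, so the proof is sound once that identification is written out.
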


\begin{proof}
By \cite[Proposition~5.5.6.15]{HTT}, the map $\Delta_f$ is $(m-1)$-truncated. By Corollary~\ref{cor:closedunderpb}, the space $A \times_B A$ is $m$-finite, and hence the map $\Delta_f$ is $m$-finite. Combining these two facts, we conclude that $\Delta_f$ is $(m-1)$-finite.
\end{proof}

 \begin{definition}
Let $f \colon A \to B$ be a map of spaces, and let $\cat{C}$ be a category that admits limits and colimits indexed by the fibres of $f$. Then there is a pullback functor
\[
f^* \colon \fun(B,\cat{C}) \to \fun(A,\cat{C}).
\]
Left and right Kan extensions along $f$ define left and right adjoints to $f^*$, which we denote by $f_!$ and $f_*$, respectively.
\end{definition}

\begin{definition}
Let $\cat{C}$ be a category. We say that $\cat{C}$ is $(-2)$-semiadditive without imposing any conditions.
\end{definition}

\begin{remark}
In particular, a $(-2)$-semiadditive category admits limits and colimits indexed by $(-2)$-finite spaces. Moreover, for a $(-2)$-finite map $f \colon A \to B$, the functor
\[
f^* \colon \fun(B,\cat{C}) \to \fun(A,\cat{C})
\]
is an equivalence. In this case, its left and right adjoints are both inverses of $f^*$ and are therefore naturally isomorphic. We denote this natural isomorphism by
\[
\nm_{\explicitset{*}} \colon f_! \to f_*.
\]
\end{remark}

Although this case is trivial, these definitions allow us to begin the inductive construction of higher semiadditivity. We first define a $k$-finite norm map in a $(k-1)$-semiadditive category.

\begin{definition}\label{def:integration}
Let $\cat{C}$ be a $(k-1)$-semiadditive category. Then $\cat{C}$ admits limits and colimits indexed by $(k-1)$-finite spaces, and for any $(k-1)$-finite map $f \colon A \to B$ there is a norm isomorphism
\[
\nm_f \colon f_! \to f_*.
\]
Given objects $X,Y \in \fun(B,\cat{C})$ and a morphism $a \colon f^*X \to f^*Y$, we define a morphism
\[
\int_f a \colon X \to Y
\]
as the composite
\[
\begin{tikzcd}
	X & {f_*f^*X} & {f_*f^*Y} \\
	& {f_!f^*X} & {f_!f^*Y} & Y
	\arrow[from=1-1, to=1-2]
	\arrow["{f_*a}", from=1-2, to=1-3]
	\arrow["{f_!a}"', from=2-2, to=2-3]
	\arrow[from=2-3, to=2-4]
	\arrow["{\nm_f^{-1}}"', from=1-2, to=2-2]
	\arrow["{\nm_f^{-1}}", from=1-3, to=2-3]
\end{tikzcd}
\].
\end{definition}

\begin{example}
If $\cat{C}$ is $(-2)$-semiadditive, then $\int_f$ is the inverse of
\[
f^* \colon \map(X,Y) \to \map(f^*X,f^*Y).
\]
\end{example}

\begin{definition}\label{def:normmap}
Let $\cat{C}$ be a $(k-1)$-semiadditive category that admits $k$-finite limits and colimits. Let $f \colon X \to Y$ be a $k$-finite map. By Lemma~\ref{lemma:diagismorefinite}, the diagonal $\Delta_f$ is $(k-1)$-finite, and hence $(k-1)$-semiadditivity provides a natural isomorphism
\[
\nm_{\Delta_f} \colon (\Delta_f)_! \to (\Delta_f)_*.
\]
Since $\Delta_f^*\pi_2^* = \id^* = \Delta_f^*\pi_1^*$, we obtain a morphism
\[
\id \in \map(\Delta_f^*\pi_2^*, \Delta_f^*\pi_1^*).
\]
Applying Definition~\ref{def:integration}, this defines a morphism
\[
\int_{\Delta_f} \id \colon \pi_2^* \to \pi_1^*.
\]
We define the norm map associated to $f$ as the composite
\[
\nm_f \colon
f_! \to f_*f^*f_!
\xrightarrow{\sim} f_*(\pi_1)_!\pi_2^*
\xrightarrow{\int_{\Delta_f} \id} f_*(\pi_1)_!\pi_1^*
\to f_*,
\]
where the first and last maps are the unit and counit of the adjunctions, and the middle equivalence is given by Beck--Chevalley.
\end{definition}

\begin{example}
Let $f \colon \emptyset \to *$ be the unique morphism in $\spaces$. The map $f$ is $(-1)$-finite. The functor
\[
f_! \colon \cat{C}^{\emptyset} = * \to \cat{C} = \cat{C}^{*}
\]
selects an initial object of $\cat{C}$, while
\[
f_* \colon \cat{C}^{\emptyset} = * \to \cat{C} = \cat{C}^{*}
\]
selects a terminal object. The norm map $\nm_f$ is therefore the unique morphism from the terminal object to the initial object in $\cat{C}$. A $(-1)$-semiadditive category is precisely a category in which this morphism is an isomorphism. Such a category is also known as a pointed category.
\end{example}

\begin{example}
Let $A$ be a finite set and let $f \colon A \to *$. Then the diagonal
\[
\Delta_f \colon A \to A \times A
\]
is an inclusion of connected components. Integrating the identity along $\Delta_f$, the map $\int_{\Delta_f} \id$ is equal to $\id$ on each point in the image and to $0$ on points not in the image. Unwinding Definition~\ref{def:integration}, it follows that the norm map $\nm_f$ is the identity matrix. For example, in $\spaces_*$, which is $(-1)$-semiadditive, the norm map associated to $*\amalg * \to *$ is the canonical map
\[
X \vee Y \to X \times Y.
\]
\end{example}

\begin{definition}
Let $\cat{C}$ be a $(k-1)$-semiadditive category that admits $k$-finite limits and colimits. We say that $\cat{C}$ is $k$-semiadditive if the norm map $\nm_f$ is an isomorphism for every $k$-finite map $f$. We say that $\cat{C}$ is $\infty$-semiadditive if it is $k$-semiadditive for all $k$.
\end{definition}

\begin{example}
The category $\tmnt{n}$ is $\infty$-semiadditive.
\end{example}

\begin{proof}
By \cite[Theorem~5.2.1]{HL}, the category $\spectra_{K(n)}$ is $\infty$-semiadditive. The functor
\[
E_n \otimes - \colon \spectra_{K(n)} \to \tmnt{n}
\]
is a symmetric monoidal left adjoint, and therefore satisfies the hypotheses of \cite[Corollary~3.3.2]{CSY1}. It follows that $\tmnt{n}$ is $\infty$-semiadditive.
\end{proof}

\begin{notation}
If $A$ is a $\pi$-finite space, let $\pi_A \colon A \to *$ denote the unique map. We write $\nm_A = \nm_{\pi_A}$ and $\int_A = \int_{\pi_A}$.
\end{notation}

\subsection{The $\alpha$ operation}\label{suse:thealphaop}

$\infty$-semiadditivity allows one to sum over $\pi$-finite families of morphisms. In algebra, we are often interested in algebras, and in particular in multiplication operations. A basic requirement is that multiplication distributes over addition.

In higher algebra, the standard setting for this compatibility is a commutative algebra object in a symmetric monoidal stable category whose tensor product preserves colimits in each variable.

In the previous chapter, we defined a notion of integration over $\pi$-finite spaces. It is natural to seek a notion of algebra in which multiplication distributes over these integration maps, in the sense that
\[
b \cdot \int_A a = \int_A b \cdot a.
\]
This compatibility is formalized in the following definition.

\begin{definition}
An $m$-semiadditively symmetric monoidal category $(\cat{C},\otimes)$ is a symmetric monoidal $m$-semiadditive category $(\cat{C},\otimes)$ such that the tensor product $\otimes$ preserves $m$-finite colimits in each variable.
\end{definition}

\begin{remark}
By $m$-semiadditivity, this condition is equivalent to requiring that the tensor product $\otimes$ preserves $m$-finite limits in each variable.
\end{remark}

\begin{example}
The category $\tmnt{n}$ is presentable and admits a symmetric monoidal structure such that the functor $X \otimes -$ has a right adjoint for every object $X$. As noted above, $\tmnt{n}$ is $\infty$-semiadditive, and therefore $\infty$-semiadditively symmetric monoidal.
\end{example}

Let $\cat{C}$ be a $1$-semiadditively symmetric monoidal category. One consequence of $m$-semiadditivity is the existence of power operations on commutative algebras. In this section, we define one such operation. Let $L,R \in \cat{C}$ be a commutative coalgebra and a commutative algebra, respectively. Then
\[
\pi_0 \map(L,R)
\]
has the structure of a commutative semiring. If $\cat{C}$ is additive, this is a commutative ring.

\begin{definition}
Let $L,R \in \cat{C}$ be a cocommutative coalgebra and a commutative algebra, respectively, in a $1$-semiadditively symmetric monoidal category. Given a morphism $x \colon L \to R$, consider the map
\[
x^{\otimes p} \colon L^{\otimes p} \to R^{\otimes p},
\]
viewed as a morphism in $\fun(BC_p,\cat{C})$ via the cyclic action. The $\alpha$ power operation
\[
\alpha \colon \pi_0 \map(L,R) \to \pi_0 \map(L,R)
\]
sends $x$ to the composite
\[
\begin{tikzcd}
	L && {(L^{\otimes p})^{hC_p}} && {(R^{\otimes p})^{hC_p}} \\
	\\
	&& {(L^{\otimes p})_{hC_p}} && {(R^{\otimes p})_{hC_p}} && R.
	\arrow["{(x^{\otimes p})^{hC_p}}", from=1-3, to=1-5]
	\arrow["{\nm_{BC_p}^{-1}}"', from=1-3, to=3-3]
	\arrow["{(x^{\otimes p})_{hC_p}}"', from=3-3, to=3-5]
	\arrow["{\nm_{BC_p}^{-1}}", from=1-5, to=3-5]
	\arrow["\text{product}", from=3-5, to=3-7]
	\arrow["\text{coproduct}", from=1-1, to=1-3]
\end{tikzcd}
\].
\end{definition}

\begin{example}
We are interested in the case where $\cat{C} = \tmnt{n}$ and $L = E$. In this situation,
\[
\pi_0 \map_{\tmnt{n}}(E,R) = \pi_0 \map_{\spectra}(\mathbb{S},R) = \pi_0(R),
\]
so the operation $\alpha$ defines a map
\[
\alpha \colon \pi_0(R) \to \pi_0(R).
\]
\end{example}

We recall some basic properties of the $\alpha$ operation.

\begin{proposition}\cite[Proposition~4.2.8]{CSY1}
Let $L,R \in \cat{C}$ be a commutative coalgebra and a commutative algebra, respectively, in a $1$-semiadditively symmetric monoidal category. Let $x,y \colon L \to R$. Then:
\begin{enumerate}
    \item $\alpha(x+y) - \alpha(x) - \alpha(y) = \frac{(x+y)^p - x^p - y^p}{p}$.
    \item $\alpha(0) = 0$.
\end{enumerate}
\end{proposition}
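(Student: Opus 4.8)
\emph{Proof plan.} The plan is to decompose the $p$-th tensor power into $C_p$-orbits. For a $C_p$-equivariant morphism $\phi\colon L^{\otimes p}\to R^{\otimes p}$, let $A(\phi)\colon L\to R$ denote the composite
$$L\to (L^{\otimes p})^{hC_p}\xrightarrow{\phi^{hC_p}}(R^{\otimes p})^{hC_p}\xrightarrow{\nm_{BC_p}^{-1}}(R^{\otimes p})_{hC_p}\to R,$$
whose first map is induced by the iterated comultiplication of $L$ (which is $C_p$-equivariant for the trivial action, hence lands in the fixed points) and whose last map is induced by the iterated multiplication of $R$ (which factors through the orbits); thus $\alpha(x)=A(x^{\otimes p})$. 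The first point is that $\phi\mapsto A(\phi)$ is additive: $(-)^{hC_p}$ preserves finite biproducts, $\nm_{BC_p}^{-1}$ is a fixed isomorphism, and in a semiadditive symmetric monoidal category both composition with a fixed morphism and the tensor product are additive in each variable. (When $\cat C$ is not additive, part (1) is to be read as $\alpha(x+y)=\alpha(x)+\alpha(y)+\sum_{0<k<p}\tfrac1p\binom pk x^k y^{p-k}$, which is meaningful in the semiring $\pi_0\map(L,R)$ since $p\mid\binom pk$.) In particular $0^{\otimes p}$ is the zero morphism, so $\alpha(0)=A(0)=0$; this is part (2).

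For part (1), since $\otimes$ preserves finite coproducts in each variable one has $(x+y)^{\otimes p}=\sum_{\vec z\in\{x,y\}^p}z_1\otimes\cdots\otimes z_p$ in $\pi_0\map(L^{\otimes p},R^{\otimes p})$, and $C_p$ acts on the indexing set $\{x,y\}^p$ by cyclic shift. Because $p$ is prime, each orbit is either a singleton — the two constant tuples, giving $x^{\otimes p}$ and $y^{\otimes p}$ — or free of size $p$. Summing over orbits yields a decomposition into $C_p$-equivariant morphisms $(x+y)^{\otimes p}=x^{\otimes p}+y^{\otimes p}+\sum_O\psi_O$, where $\psi_O=\sum_{\vec z\in O}z_1\otimes\cdots\otimes z_p$ and $O$ runs over the free orbits, and additivity of $A$ gives $\alpha(x+y)=\alpha(x)+\alpha(y)+\sum_O A(\psi_O)$.

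The heart of the matter is the claim $A(\psi_O)=x^ky^{p-k}$ when the free orbit $O$ contains exactly $k$ copies of $x$ (so $0<k<p$). I would deduce this from the fact that $\psi_O$ factors through an induced $C_p$-object. Writing $x+y=\nabla_R\circ(x,y)$ with $(x,y)\colon L\to R\oplus R$ and distributing $\otimes$ over the biproduct exhibits $(R\oplus R)^{\otimes p}$, as a $C_p$-object, as $R^{\otimes p}\oplus R^{\otimes p}\oplus\bigoplus_O\mathrm{Ind}_e^{C_p}(R^{\otimes p})$ — one induced summand per free orbit, the cyclic slot-permutations absorbed by reindexing. Then $\psi_O$ is the composite of the $O$-component of $(x,y)^{\otimes p}$, which corresponds under the induction adjunction to $w_1\otimes\cdots\otimes w_p\colon L^{\otimes p}\to R^{\otimes p}$ for a chosen representative $(w_1,\dots,w_p)\in O$, with the fold (summation) map $\mathrm{Ind}_e^{C_p}(R^{\otimes p})\to R^{\otimes p}$. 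On any induced object $\mathrm{Ind}(M)$ the norm $\nm_{BC_p}\colon\mathrm{Ind}(M)_{hC_p}\to\mathrm{Ind}(M)^{hC_p}$ is the canonical equivalence identifying both sides with $M$; inserting this, and using naturality of $\nm_{BC_p}$ to move it past the fold map, collapses $A(\psi_O)$ to the honest (non-equivariant) $p$-fold product $\mu_p\circ(w_1\otimes\cdots\otimes w_p)\circ\Delta_p=w_1w_2\cdots w_p=x^ky^{p-k}$, where $\Delta_p\colon L\to L^{\otimes p}$ and $\mu_p\colon R^{\otimes p}\to R$ are the iterated comultiplication and multiplication. (For the singleton orbits this manipulation is unavailable — the cyclic $C_p$-object $R^{\otimes p}$ is not induced — which is precisely why those summands contribute the genuine $\alpha(x),\alpha(y)$ rather than $x^p/p,y^p/p$.) Verifying that $\nm_{BC_p}$ restricts to the canonical equivalence on the induced summands, compatibly with the orbit-wise biproduct splitting of $(R\oplus R)^{\otimes p}$, is where essentially all the content lies, and I expect it to be the main obstacle; everything else is bookkeeping.

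It remains to count: for $0<k<p$ the $\binom pk$ tuples with $k$ copies of $x$ partition into $\binom pk/p$ free orbits, so $\sum_O A(\psi_O)=\sum_{k=1}^{p-1}\tfrac1p\binom pk\,x^ky^{p-k}=\tfrac{(x+y)^p-x^p-y^p}{p}$, and combining this with the identity $\alpha(x+y)=\alpha(x)+\alpha(y)+\sum_O A(\psi_O)$ from above proves part (1).
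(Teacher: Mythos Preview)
The paper does not give its own proof of this proposition; it is simply cited from \cite[Proposition~4.2.8]{CSY1}. Your argument is correct and is essentially the standard one (and the one in \cite{CSY1}): expand $(x+y)^{\otimes p}$ over $\{x,y\}^p$, group the summands into $C_p$-orbits, and observe that each free orbit contributes an induced $C_p$-object on which the norm collapses to the identity, so that $A(\psi_O)$ reduces to the ordinary monomial $w_1\cdots w_p$. You have correctly isolated the only non-formal step --- that $\nm_{BC_p}$ restricted to an induced summand $\mathrm{Ind}_e^{C_p}(M)$ is the canonical identification of both $(\mathrm{Ind}_e^{C_p}M)_{hC_p}$ and $(\mathrm{Ind}_e^{C_p}M)^{hC_p}$ with $M$ --- and flagged it honestly. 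In the language of this paper that fact is an instance of the base-change compatibility of integration: for $e\hookrightarrow C_p$ one has $\int_{BC_p}\mathrm{tr}_e^{C_p}(\phi)=\phi$, which is exactly the ``Fubini'' property of $\int$ along the composite $*\to BC_p\to *$. Everything else in your write-up is, as you say, bookkeeping.
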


We focus on a particular case.

\begin{proposition}\cite[Lemma~4.2.10]{CSY1}
Suppose both $L,R$ are the unit objects of $\cat{C}$, equipped with their canonical coalgebra and algebra structures. Then
\[
(L^{\otimes p})^{hC_p} \simeq L^{BC_p},
\]
and the map $L \to L^{BC_p}$ induced by the comultiplication identifies with the unit
\[
\id \to \pi_*\pi^*.
\]
Similarly,
\[
(R^{\otimes p})_{hC_p} \simeq R \otimes BC_p,
\]
and the map $(R^{\otimes p})_{hC_p} \to R$ induced by the multiplication identifies with the counit
\[
\pi^*\pi_! \to \id.
\]
\end{proposition}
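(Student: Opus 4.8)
The plan is to reduce the whole statement to the fact that the cyclic action on $\smunit^{\otimes p}$ is canonically trivial, compatibly with the comultiplication and multiplication of $\smunit$, and then to push this trivialization through $\lim_{BC_p}$ and $\colim_{BC_p}$ and recognize the resulting maps as the (co)unit of the appropriate adjunction. The two halves of the assertion are formally parallel --- one passes from the coalgebra statement to the algebra one by working in $\cat{C}^{\mathrm{op}}$ --- so I would carry out the coalgebra half in detail and run the dual argument for the other.

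First I would use that $\smunit$ is canonically a commutative algebra, being the initial object of commutative algebras in $\cat{C}$, and dually a cocommutative coalgebra whose comultiplication is the inverse of the unitor $\smunit\xrightarrow{\sim}\smunit\otimes\smunit$. From the commutative algebra structure one extracts, for each $m$, the $m$-fold multiplication as a morphism $\mu^{(m)}\colon\smunit^{\otimes m}\to\smunit$ in $\fun(B\Sigma_m,\cat{C})$, with the permutation action on the source and the trivial action on the target. Its underlying morphism in $\cat{C}$ is the iterated unitor, hence an equivalence; since equivalences in a functor $\infty$-category are detected objectwise, $\mu^{(m)}$ is itself an equivalence in $\fun(B\Sigma_m,\cat{C})$. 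Dually, the $m$-fold comultiplication $\Delta^{(m)}\colon\smunit\to\smunit^{\otimes m}$ is an equivalence in $\fun(B\Sigma_m,\cat{C})$, now with trivial action on the source. Restricting along $BC_p\hookrightarrow B\Sigma_p$ I obtain equivariant equivalences $\mu^{(p)}$ and $\Delta^{(p)}$ between $\smunit^{\otimes p}$, with its cyclic action, and the constant object $\pi^*\smunit$, where $\pi\colon BC_p\to\ast$.

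Next I would apply $\pi_*=\lim_{BC_p}$ to $\Delta^{(p)}$, obtaining an equivalence $(\smunit^{\otimes p})^{hC_p}=\pi_*(\smunit^{\otimes p})\xrightarrow{\sim}\pi_*\pi^*\smunit=\smunit^{BC_p}$, which is the first claimed identification. The map labelled ``coproduct'', namely $\smunit\to(\smunit^{\otimes p})^{hC_p}$, is by construction the $(\pi^*\dashv\pi_*)$-mate of the equivariant comultiplication $\Delta^{(p)}\colon\pi^*\smunit\to\smunit^{\otimes p}$, so transporting it along the equivalence just produced replaces $\Delta^{(p)}$ by $\id_{\pi^*\smunit}$ and hence ``coproduct'' by the mate of the identity, which is exactly the unit $\smunit\to\pi_*\pi^*\smunit$. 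For the algebra half I would run the same computation with $\pi_!=\colim_{BC_p}$ and $\mu^{(p)}$ in place of $\pi_*$ and $\Delta^{(p)}$: this gives $(\smunit^{\otimes p})_{hC_p}\xrightarrow{\sim}\pi_!\pi^*\smunit=\smunit\otimes BC_p$, and since ``product'' is $\pi_!(\mu^{(p)})$ followed by the counit $\pi_!\pi^*\smunit\to\smunit$, after the transport $\pi_!(\mu^{(p)})$ becomes an identity and ``product'' is identified with the counit $\pi_!\pi^*\to\id$.

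The step I expect to be the main obstacle is the one in the second paragraph: rigorously extracting the equivariant iterated multiplications $\mu^{(m)}$, equivalently, producing the trivialization of the $\Sigma_m$-action on $\smunit^{\otimes m}$ in the $\infty$-categorical framework. Heuristically this is just ``a commutative algebra has equivariant iterated multiplication maps'', but pinning it down means unwinding the structure of the commutative $\infty$-operad; for the unit object in particular the cleanest route is probably to transport the manifestly trivial data along the essentially unique symmetric monoidal functor from the initial symmetric monoidal $\infty$-category into $\cat{C}$. A secondary point worth checking is that the equivalences chosen on the limit and the colimit sides genuinely match the maps ``coproduct'' and ``product'' appearing in the construction of $\alpha$, rather than differing from them by an automorphism of $\pi^*\smunit$; this is where one uses that $\mu^{(p)}$ and $\Delta^{(p)}$ are inverse to one and the same unitor.
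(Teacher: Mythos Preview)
The paper does not give its own proof of this proposition: it is stated with a citation to \cite[Lemma~4.2.10]{CSY1} and used immediately for the corollary that follows. So there is nothing to compare against here beyond the original reference.

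That said, your argument is the standard one and is correct. The crux is exactly what you isolate: for the unit object the $p$-fold (co)multiplication is the iterated unitor, hence an equivalence, and its $\Sigma_p$-equivariance makes it an equivalence in $\fun(B\Sigma_p,\cat{C})$ between $\smunit^{\otimes p}$ with the permutation action and $\pi^*\smunit$. Pushing through $\pi_*$ and $\pi_!$ and tracking the adjunction (co)units gives the two identifications. Your remark that the ``coproduct'' map is the $(\pi^*\dashv\pi_*)$-mate of $\Delta^{(p)}$, and that transporting along $\pi_*(\Delta^{(p)})$ collapses it to the unit of the adjunction, is precisely the content of the second claim. The point you flag as the main obstacle --- extracting the equivariant $\mu^{(m)}$ from the operadic data --- is genuinely where the care lies, but for $\smunit$ your suggested fix (transport along the essentially unique symmetric monoidal functor from the initial symmetric monoidal $\infty$-category) is adequate and is in fact how \cite{CSY1} handles it. Your secondary worry about an ambient automorphism of $\pi^*\smunit$ is not an issue: you are using $\Delta^{(p)}$ and $\mu^{(p)}$ as mutually inverse identifications, so the two transports are automatically compatible.
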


\begin{corollary}
If $L$ and $R$ are unit objects of $\cat{C}$, then
\[
\alpha(f) = \int_{BC_p} f^{\otimes p}.
\]
\end{corollary}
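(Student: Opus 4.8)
The plan is to substitute the identifications of the preceding proposition into the composite defining $\alpha(f)$ and to recognize the result as the composite used to define $\int_{BC_p}$ in Definition~\ref{def:integration}. Throughout write $\pi := \pi_{BC_p} \colon BC_p \to *$.

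I would first unpack the equivariant content behind the preceding proposition. Since $L$ and $R$ are unit objects, the cyclic permutation action on $L^{\otimes p}$ and on $R^{\otimes p}$ is canonically trivial, so there are equivalences $L^{\otimes p} \simeq \pi^*L$ and $R^{\otimes p} \simeq \pi^*R$ in $\fun(BC_p,\cat{C})$ — in particular this is what makes $\int_{BC_p} f^{\otimes p}$ typecheck, $f^{\otimes p}$ now being (identified with) a morphism $a \colon \pi^*L \to \pi^*R$ to which Definition~\ref{def:integration} applies. Applying $\pi_* = (-)^{hC_p}$ and $\pi_! = (-)_{hC_p}$ gives equivalences $(L^{\otimes p})^{hC_p} \simeq \pi_*\pi^*L$, $(R^{\otimes p})^{hC_p} \simeq \pi_*\pi^*R$, $(R^{\otimes p})_{hC_p} \simeq \pi_!\pi^*R$, under which: the comultiplication map $L \to (L^{\otimes p})^{hC_p}$ becomes the unit of the adjunction $(\pi^*,\pi_*)$ at $L$; the multiplication map $(R^{\otimes p})_{hC_p} \to R$ becomes the counit of the adjunction $(\pi_!,\pi^*)$ at $R$; the norm $\nm_{BC_p}$ applied to $R^{\otimes p}$ becomes the norm map $\nm_\pi$ of Definition~\ref{def:normmap} applied to $\pi^*R$; and $(f^{\otimes p})^{hC_p} = \pi_* a$, $(f^{\otimes p})_{hC_p} = \pi_! a$. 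The first three identifications are exactly the content of the preceding proposition; the last is naturality.

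Granting this, the identity is immediate: the composite
\[
L \to (L^{\otimes p})^{hC_p} \xrightarrow{(f^{\otimes p})^{hC_p}} (R^{\otimes p})^{hC_p} \xrightarrow{\nm_{BC_p}^{-1}} (R^{\otimes p})_{hC_p} \to R
\]
defining $\alpha(f)$ is carried by the equivalences above to
\[
L \to \pi_*\pi^*L \xrightarrow{\pi_* a} \pi_*\pi^*R \xrightarrow{\nm_\pi^{-1}} \pi_!\pi^*R \to R,
\]
with first and last maps the unit and the counit, and this is precisely the composite that Definition~\ref{def:integration} assigns to $\int_\pi a = \int_{BC_p} f^{\otimes p}$ (taking the route through $\pi_*\pi^*R$, which coincides with the route through $\pi_!\pi^*L$ by naturality of the norm — the same naturality that makes the square in the definition of $\alpha$ commute). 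Hence $\alpha(f) = \int_{BC_p} f^{\otimes p}$.

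The only step that needs genuine care — and the main obstacle — is the compatibility claim of the second paragraph: one must check that the equivalences furnished by the preceding proposition are sufficiently natural that applying $\pi_*$ and $\pi_!$ to the coalgebra/algebra structure maps and to $f^{\otimes p}$ reproduces the (co)units and $\pi_* a$, $\pi_! a$ on the nose rather than up to some unidentified automorphism, and that the two occurrences of the norm map (in the definition of $\alpha$ and in Definition~\ref{def:integration}) are identified with one another. Once that bookkeeping is settled, the proof is just the observation that the two composites have become term-by-term equal.
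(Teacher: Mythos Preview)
Your proposal is correct and follows exactly the approach the paper intends: the paper gives no separate proof for this corollary, treating it as immediate from the preceding proposition, and your argument simply spells out the substitution of those identifications into the definition of $\alpha$ and matches the result against Definition~\ref{def:integration}. The caveat you flag about naturality is prudent but not a genuine obstacle here, since the preceding proposition is precisely the statement that the (co)multiplication maps identify with the (co)units under the given equivalences.
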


\subsection{Cardinality and the $\delta$ operation}\label{suse:cardanddelta}

In a semiadditive $1$-category, every object carries a canonical structure of a commutative monoid. The addition map is given by $X \oplus X \to X$. There is a $\pi$-finite analogue of this construction in our setting. We first introduce precise terminology for commutative monoids.

\begin{definition}
An $m$-commutative monoid in a category $\cat{C}$ admitting $m$-finite limits is a functor
\[
F \colon \spaan(\spaces_{m\text{-}\fin}) \to \cat{C}
\]
that preserves $m$-finite limits. We denote the category of $m$-commutative monoids in $\cat{C}$ by $\cmon_m(\cat{C})$.

An $\infty$-commutative monoid is an object of the limit
\[
\lim_m \cmon_m(\cat{C}).
\]
\end{definition}

\begin{remark}
The natural functor
\[
\fun^{m\text{-fin}}(\spaan(\spaces_{\pi\text{-}\fin}), \cat{C}) \to \lim_m \cmon_m(\cat{C})
\]
with the LHS the category of functors that preserves $m$-finite limits, is an equivalence. 
\end{remark}

\begin{example}
A $(-1)$-commutative monoid is a pointed object.
\end{example}

\begin{example}
A $0$-commutative monoid is an ordinary commutative monoid (that is, an $\eoperad_{\infty}$-algebra).
\end{example}

\begin{proposition}\cite[Theorem~4.1]{HAR}\label{prop:canmonoid}
Let $\cat{C}$ be an $m$-semiadditive category. Then the functor
\[
\cmon_m(\cat{C}) \to \cat{C}
\]
given by evaluation at $*$ is an equivalence. Equivalently, every object $X \in \cat{C}$ admits a unique structure of an $m$-commutative monoid.
\end{proposition}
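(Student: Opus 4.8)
The plan is to construct the inverse functor explicitly and verify that the two composites are equivalent to the respective identities. Given an object $X \in \cat{C}$, we must produce a canonical $m$-commutative monoid structure, i.e.\ a limit-preserving functor $F_X \colon \spaan(\spaces_{m\text{-}\fin}) \to \cat{C}$ with $F_X(*) \simeq X$. The key point is that $\cat{C}$ being $m$-semiadditive means exactly that the norm maps $\nm_f$ are isomorphisms for all $m$-finite $f$, which is precisely what allows one to turn the \emph{wrong-way} maps in a span into honest morphisms in $\cat{C}$: for a span $A \xleftarrow{g} S \xrightarrow{h} B$ of $m$-finite spaces, one sets $F_X(A \leftarrow S \rightarrow B)$ to be the composite $X^{A} \xrightarrow{g^*} X^{S} \xrightarrow{\nm_g^{-1}\text{ on }X} X_{S}$ (using $m$-semiadditivity to identify $X^S \simeq X_S$ after applying the relevant $g_!/g_*$ adjunction) followed by the pushforward along $h$, landing in $X^B$. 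Assembling these coherently is the content of the construction; I would phrase it as follows.

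First I would recall that an $m$-commutative monoid is the same datum as a functor on $\spaan(\spaces_{m\text{-}\fin})$ preserving $m$-finite limits, and that such functors are detected on the subcategory of spans $A \leftarrow S \rightarrow *$. Second, I would define the candidate functor $F_X$ by right Kan extension: the constant functor at $X$ on the terminal category extends along $* \to \spaan(\spaces_{m\text{-}\fin})$ (the inclusion of the point), and I claim $m$-semiadditivity forces this right Kan extension to coincide with the left Kan extension, so the resulting functor is well-defined, sends $A$ to $X^A \simeq X_A$, and preserves $m$-finite limits because limits indexed by $m$-finite spaces are computed by the $(-)^A$ and then identified via the norm. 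Third, evaluation at $*$ obviously recovers $X$, giving one composite equivalence. For the other composite — starting from an $m$-commutative monoid $F$, restricting to $F(*)$, and re-extending — I would argue that $F$ is \emph{already} a right Kan extension of its restriction to the point: this is where one uses that $F$ preserves $m$-finite limits, so $F(A) \simeq \lim_{A} F(*) = F(*)^A$, together with the (dual) statement that $F$ also preserves the relevant colimits via the norm, pinning down the values on spans uniquely.

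The main obstacle I anticipate is the \emph{coherence}: checking that the span-functor $F_X$ is genuinely a functor — that composition of spans (which involves a pullback in $\spaces_{m\text{-}\fin}$, legitimate by Corollary~\ref{cor:closedunderpb}) is respected — requires the Beck--Chevalley / base-change compatibilities of the norm maps $\nm_f$ under pullback, exactly the sort of compatibility packaged in the inductive setup of Definition~\ref{def:normmap}. Rather than grind through this $\infty$-categorically by hand, I would cite the ambidexterity formalism of \cite{HL} (and its treatment in \cite{CSY1}): the statement that $\cat{C}$ is $m$-semiadditive is equivalent to the assertion that the ``$m$-finite'' part of $\cat{C}$ is \emph{ambidextrous}, and Harpaz's theorem \cite[Theorem~4.1]{HAR} is precisely the packaging of this ambidexterity as the equivalence $\cmon_m(\cat{C}) \xrightarrow{\ \sim\ } \cat{C}$. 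So in the write-up I would present the construction of $F_X$ in enough detail to make the statement believable, note that the coherence is handled by the cited ambidexterity machinery, and emphasize the consequence we actually need downstream: every object of $\tmnt{n}$ — in particular $E$ — has a \emph{unique} $\infty$-commutative monoid structure, which is what licenses the later integration and $\delta$ constructions.
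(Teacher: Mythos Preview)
The paper does not supply its own proof of this proposition: it is stated with a citation to \cite[Theorem~4.1]{HAR}, followed only by a one-sentence informal description (``Roughly speaking, the structure is defined by sending an $m$-finite space $A$ to $\lim_A M$\ldots''). Your proposal is a more detailed expansion of exactly that sketch --- define $F_X(A)=X^A$, handle wrong-way maps via the norm, and defer the coherence to Harpaz's ambidexterity result --- so it is fully aligned with the paper's treatment, just more verbose.
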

Roughly speaking, the structure is defined by sending an $m$-finite space $A$ to $\lim_A M$, the limit of the constant diagram at $M$, sending wrong-way maps to maps of limits, and sending right-way maps to maps of colimits.

\begin{definition}
The cardinality of $A \in \spaces_{m\text{-}\fin}$ in $M$ is the endomorphism
\[
\cardinality{A}_M = M(* \leftarrow A \rightarrow *) \colon M \to M.
\]
\end{definition}

\begin{proposition}
Let $M \in \cat{C}$ be an $m$-commutative monoid in an $m$-semiadditive category. Then
\[
\cardinality{A}_M = \int_A \id.
\]
Consequently, $\cardinality{A}_-$ defines a natural endomorphism of the identity functor on $\cat{C}$.
\end{proposition}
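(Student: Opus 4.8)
The plan is to unwind both sides of the claimed identity $\cardinality{A}_M = \int_A \id$ and see that they are computed by the same composite of norm maps, counits, and units. Recall from the previous proposition that the $m$-commutative monoid structure on $M$ is the essentially unique one, and that, informally, it sends an $m$-finite space $A$ to $\lim_A M$ on ``wrong-way'' legs of a span and to $\colim_A M$ on ``right-way'' legs. So the endomorphism $M(* \leftarrow A \rightarrow *)$ is, by definition, the composite $M = \lim_* M \to \lim_A M$ (restriction along $A \to *$, i.e. the functor $\pi_A^*$ applied to the unit, giving $(\pi_A)_* \pi_A^* M$ after using the adjunction) followed by the norm isomorphism $\nm_A \colon (\pi_A)_! \pi_A^* \to (\pi_A)_* \pi_A^*$ run backwards, followed by $\colim_A M = (\pi_A)_! \pi_A^* M \to M$ (the counit). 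That is precisely the definition of $\int_{\pi_A} \id_M$ from Definition~\ref{def:integration}, with $X = Y = M$ and $a = \id \colon \pi_A^* M \to \pi_A^* M$.

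First I would make precise the claim that the canonical $m$-commutative monoid structure on $M$ has its span-functor given on objects by $A \mapsto \lim_A M \simeq \colim_A M$ (the two agreeing via $\nm_A$), on forward maps $f\colon A \to B$ by the colimit-pushforward $f_!$, and on backward maps by the limit-pullback $f_*$ followed by the inverse norm to land back in colimits. This is exactly the content of the ``roughly speaking'' remark following Proposition~\ref{prop:canmonoid}; I would either cite \cite{HAR} for the precise statement or reconstruct it from the fact that $\spaan(\spaces_{m\text{-}\fin})$ is generated under composition by the backward maps (which go to $f_*$) and forward maps (which go to $f_!$), together with $m$-semiadditivity forcing the normalization on each object. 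Then the span $* \xleftarrow{\pi_A} A \xrightarrow{\pi_A} *$ factors as the backward leg followed by the forward leg, and $M$ applied to it is $M \xrightarrow{\text{unit}} (\pi_A)_*\pi_A^* M \xrightarrow{\nm_A^{-1}} (\pi_A)_!\pi_A^* M \xrightarrow{\text{counit}} M$.

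Next I would match this composite term-by-term with $\int_{\pi_A}\id$ as spelled out in Definition~\ref{def:integration}: there the morphism $\int_f a$ for $a \colon f^*X \to f^*Y$ is the composite $X \to f_*f^*X \xrightarrow{f_* a} f_*f^*Y \xrightarrow{\nm_f^{-1}} f_!f^*Y \to Y$. Taking $f = \pi_A$, $X = Y = M$, and $a = \id_{\pi_A^* M}$, the middle map $f_*a$ is the identity, so this collapses to $M \to (\pi_A)_*\pi_A^* M \xrightarrow{\nm_A^{-1}} (\pi_A)_!\pi_A^* M \to M$ — literally the same composite. Hence $\cardinality{A}_M = \int_A \id$. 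The naturality statement then follows: $\cardinality{A}_-$ is a natural endomorphism of $\id_{\cat C}$ because each ingredient — the unit and counit of the $(\pi_A)_! \dashv \pi_A^* \dashv (\pi_A)_*$ adjunctions, and the norm isomorphism $\nm_A$ — is natural in $M$, the latter because $\nm_{\pi_A}$ is a natural transformation of functors $\fun(*,\cat C) \to \fun(*,\cat C)$ by Definition~\ref{def:normmap}.

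The main obstacle I anticipate is not any calculation but the bookkeeping in Step~1: pinning down the precise functor on $\spaan(\spaces_{m\text{-}\fin})$ underlying the canonical monoid structure and being careful about the Beck--Chevalley identifications implicit in ``$M(\ast \leftarrow A \rightarrow \ast)$'' — in particular checking that the normalization chosen in the span category (so that composable spans compose to the expected norm maps, cf. the appearance of $\int_{\Delta_f}\id$ in Definition~\ref{def:normmap}) is compatible with the normalization built into $\int_{\pi_A}$. Once the definition of the canonical monoid is laid out explicitly, the identification is immediate. I would therefore keep the proof short: invoke \cite{HAR} for the explicit description of the monoid structure, then observe that the two composites are syntactically identical.
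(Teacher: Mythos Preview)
Your proposal is correct and follows essentially the same approach as the paper: unwind both $\cardinality{A}_M$ and $\int_A \id$ as the composite $M \to \lim_A M \xrightarrow{\nm_A^{-1}} \colim_A M \to M$ and observe they coincide, then deduce naturality from the naturality of the unit, counit, and norm map. The paper's proof is considerably terser and simply asserts this identification without the bookkeeping you carefully spell out, but the argument is the same.
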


\begin{proof}
For the first claim, the integral $\int_A \id$ is the composite
\[
M \longrightarrow \lim_A M \longrightarrow \colim_A M \longrightarrow M,
\]
which is precisely the definition of $M(* \leftarrow A \rightarrow *)$.

The second claim follows because the diagonal and codiagonal maps
\[
M \to \lim_A M
\quad\text{and}\quad
\colim_A M \to M
\]
are natural in $M$, as is the norm map.
\end{proof}

In fact, if $\cat{C}$ is $m$-semiadditively symmetric monoidal, we can say something stronger.

\begin{proposition}\cite[Lemma~3.3.4]{CSY1}
Let $\cat{C}$ be an $m$-semiadditively symmetric monoidal category with unit $\smunit$, and let $M \in \cat{C}$. Then
\[
\cardinality{A}_M = \cardinality{A}_{\smunit} \otimes M.
\]
\end{proposition}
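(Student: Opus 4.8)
The plan is to realize $\cardinality{A}_M$ as the image of $\cardinality{A}_\smunit$ under the endofunctor $F = (-)\otimes M\colon \cat{C}\to\cat{C}$, exploiting that $F$ is compatible with the whole higher-semiadditive package on $m$-finite spaces. Recall from the previous proposition that $\cardinality{A}_M = \int_A\id_M$, which by Definition~\ref{def:integration} unwinds to the composite
\[
M \xrightarrow{\ \Delta\ } \lim_A M \xrightarrow{\ \nm_A^{-1}\ } \colim_A M \xrightarrow{\ \nabla\ } M,
\]
namely the diagonal, the inverse norm, and the codiagonal; likewise with $\smunit$ in place of $M$. So it suffices to show that applying $F$ to the $\smunit$-composite produces the $M$-composite, after identifying $F(\smunit)=\smunit\otimes M\simeq M$ via the unitor.

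First I would record that $F$ preserves $m$-finite colimits by the definition of an $m$-semiadditively symmetric monoidal category, and $m$-finite limits by the Remark following it. Since $A$ is $m$-finite, $F$ therefore commutes with $\lim_A$ and $\colim_A$ of the constant diagram, giving canonical equivalences $F(\lim_A\smunit)\simeq\lim_A M$ and $F(\colim_A\smunit)\simeq\colim_A M$. Because $F$ also commutes (pointwise) with the formation of constant $A$-diagrams, it sends the unit $\Delta_\smunit$ and counit $\nabla_\smunit$ of the relevant constant-diagram adjunctions to $\Delta_M$ and $\nabla_M$; this is the standard fact that a functor commuting with a left (resp. right) adjoint and with the functor it is adjoint to preserves the corresponding counit (resp. unit).

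The heart of the matter is that $F$ sends $\nm_A^{\smunit}$ to $\nm_A^{M}$ under these identifications; more generally, that $F$ commutes with $\nm_f$ and with $\int_f$ for every $k$-finite map $f$ with $k\le m$. I would prove this by induction on $k$. The base case $k=-2$ is immediate: there $\nm_f$ is an identity and $\int_f=(f^*)^{-1}$, and $F$ commutes with $f^*$ (pointwise restriction) and hence preserves that inverse. For the inductive step one unwinds Definition~\ref{def:normmap}: $\nm_f$ for a $k$-finite $f$ is assembled from the units and counits of the adjunctions $(f_!,f^*,f_*)$, the Beck–Chevalley equivalence of the square defining $\Delta_f$, and the map $\int_{\Delta_f}\id$ for the $(k-1)$-finite diagonal $\Delta_f$. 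Since $F$ preserves $k$-finite, hence all lower-dimensional, limits and colimits, it preserves the Kan extensions along these maps and the Beck–Chevalley transformations between them, and by the inductive hypothesis it commutes with $\int_{\Delta_f}$; thus it commutes with $\nm_f$, and then with $\int_f$ by Definition~\ref{def:integration}. This is precisely the naturality statement underlying \cite[\S3.3]{CSY1}.

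Granting this, applying $F$ to the $\smunit$-composite above and substituting the identifications $F(\lim_A\smunit)\simeq\lim_A M$, $F(\colim_A\smunit)\simeq\colim_A M$, $F(\Delta_\smunit)=\Delta_M$, $F(\nabla_\smunit)=\nabla_M$, and $F(\nm_A^{\smunit})=\nm_A^{M}$ yields exactly the composite defining $\cardinality{A}_M$. Hence $\cardinality{A}_M = F(\cardinality{A}_\smunit) = \cardinality{A}_\smunit\otimes M$. I expect the only genuine obstacle to be the bookkeeping in the induction — checking that every ingredient of the inductive definition of the norm map, and in particular each Beck–Chevalley equivalence, is natural under a functor preserving the relevant $m$-finite (co)limits.
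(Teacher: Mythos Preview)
The paper does not supply its own proof of this proposition; it simply records it as a citation of \cite[Lemma~3.3.4]{CSY1}. Your proposal is correct and is essentially the argument that underlies the cited result. The key observation, which you isolate, is that the endofunctor $F=(-)\otimes M$ preserves $m$-finite colimits by hypothesis and therefore (by $m$-semiadditivity) $m$-finite limits as well, so it commutes with the entire inductive apparatus of norm maps and integration. In \cite{CSY1} this is packaged abstractly: a functor preserving $m$-finite colimits between $m$-semiadditive categories is automatically compatible with the ambidexterity structure, and hence intertwines cardinalities; your induction on the truncation level of $f$, tracking units, counits, Beck--Chevalley equivalences, and $\int_{\Delta_f}$, is precisely the content of that compatibility statement. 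There is no genuine gap; the bookkeeping you flag at the end is routine mates-calculus once one knows $F$ commutes with the relevant Kan extensions.
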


We are now prepared to define the main object of study of this project.

\begin{definition}
Let $L,R \in \cat{C}$ be a commutative coalgebra and a commutative algebra, respectively, in a $1$-semiadditively symmetric monoidal category. Suppose that
\[
S = \pi_0 \map(L,R)
\]
is a ring, for example when $\cat{C}$ is stable. We define an endomorphism
\[
\delta \colon S \to S
\]
by
\[
\delta(f) = \cardinality{BC_p}_R \cdot f - \alpha(f).
\]
\end{definition}

We record a few basic properties of $\delta$.

\begin{proposition}\cite[Theorem~4.3.2]{CSY1}
\begin{enumerate}
    \item $\delta(0) = \delta(1) = 0$.
    \item $\delta(f+g) - \delta(f) - \delta(g) = \frac{f^p + g^p - (f+g)^p}{p}$.
\end{enumerate}
\end{proposition}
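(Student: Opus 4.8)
The plan is to derive both parts directly from the two properties of $\alpha$ recorded above, together with a single identification of $\alpha$ on the unit. Write $S = \pi_0\map(L,R)$ and recall that $\delta(f) = \cardinality{BC_p}_R \cdot f - \alpha(f)$. Post-composition with the endomorphism $\cardinality{BC_p}_R$ of $R$ is an additive self-map of $S$ — indeed it is multiplication by the fixed element $\cardinality{BC_p}_\smunit$, using the identity $\cardinality{BC_p}_R = \cardinality{BC_p}_\smunit \otimes R$ — so the map $x \mapsto \cardinality{BC_p}_R \cdot x$ is linear and kills $0$.

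Part (2) is then purely formal. Expanding,
\[
\delta(f+g) - \delta(f) - \delta(g) = \cardinality{BC_p}_R \cdot\big((f+g) - f - g\big) - \big(\alpha(f+g) - \alpha(f) - \alpha(g)\big);
\]
the first term vanishes by linearity, and the second equals $-\tfrac{(f+g)^p - f^p - g^p}{p}$ by the first $\alpha$-property, which is exactly $\tfrac{f^p + g^p - (f+g)^p}{p}$.

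For part (1), $\delta(0) = \cardinality{BC_p}_R \cdot 0 - \alpha(0) = 0 - 0 = 0$ using $\alpha(0) = 0$. The real content is $\delta(1) = 0$, i.e.\ $\alpha(1) = \cardinality{BC_p}_R$. Here $1 \in S$ is the composite $L \to \smunit \to R$ of the counit of $L$ and the unit of $R$. Since the construction of $\alpha$ is natural in the coalgebra $L$ and the algebra $R$, I would reduce to the universal case $L = R = \smunit$ with their canonical structures: by the proposition identifying $(\smunit^{\otimes p})^{hC_p} \simeq \smunit^{BC_p}$ and $(\smunit^{\otimes p})_{hC_p} \simeq \smunit \otimes BC_p$ compatibly with comultiplication and multiplication, the composite defining $\alpha(\id_\smunit)$ collapses to $\smunit \to \smunit^{BC_p} \xrightarrow{\nm_{BC_p}^{-1}} \smunit \otimes BC_p \to \smunit$, which is the definition of $\cardinality{BC_p}_\smunit = \int_{BC_p}\id$ (this is the corollary stated just after that proposition, applied to $f = \id_\smunit$). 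Pushing this forward along the coalgebra map $L \to \smunit$ and the algebra map $\smunit \to R$, and using $\cardinality{BC_p}_R = \cardinality{BC_p}_\smunit \otimes R$ once more, gives $\alpha(1) = \cardinality{BC_p}_R$, hence $\delta(1) = 0$.

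The one step that needs genuine care — the main obstacle — is the naturality bookkeeping behind $\alpha(1) = \cardinality{BC_p}_R$: one must check that the comultiplication $L \to (L^{\otimes p})^{hC_p}$ and multiplication $(R^{\otimes p})_{hC_p} \to R$ appearing in the definition of $\alpha$ are compatible with the structure maps $L \to \smunit$ and $\smunit \to R$, so that the diagram computing $\alpha(1)$ really does reduce to the one computing the cardinality of $BC_p$. Everything else is formal manipulation with the already-established properties of $\alpha$.
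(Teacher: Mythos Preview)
The paper does not give its own proof of this proposition; it simply cites \cite[Theorem~4.3.2]{CSY1}. Your argument is correct and is essentially the intended derivation: part~(2) and $\delta(0)=0$ are formal from the additivity of $f\mapsto \cardinality{BC_p}_R\cdot f$ together with the two $\alpha$-identities recorded just before, while $\delta(1)=0$ reduces via naturality of $\alpha$ in $L$ and $R$ to the unit case, where the paper's corollary $\alpha(f)=\int_{BC_p}f^{\otimes p}$ applied to $f=\id_\smunit$ gives $\alpha(1)=\cardinality{BC_p}$. The naturality step you flag as the main obstacle is indeed the only nontrivial point, and it holds because each constituent of the composite defining $\alpha$ (the $p$-fold comultiplication, $(-)^{\otimes p}$, the norm, and the $p$-fold multiplication) is natural in coalgebra maps on the source and algebra maps on the target.
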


\begin{remark}
These properties identify $\delta$ as an additive $p$-derivation. In \cite{CSY1}, the authors use this structure to prove that the $T(n)$-local category is $\infty$-semiadditive.
\end{remark}

\section{Power operations at height $2$}\label{sec:powerop}

The $\alpha$ and $\delta$ operations are examples of power operations. A power operation is an operation on homotopy classes of commutative algebras that depends on the algebra structure.

\begin{definition}
Let $\cat{C}$ be a category of modules over a ring spectrum in a localization of $\spectra$. A power operation of degree $0$ on commutative algebras in $\cat{C}$ is a natural endomorphism of the functor
\[
\pi_0 \in \fun(\calg(\cat{C}), \catname{Set}).
\]
\end{definition}

Given $R \in \calg(\spectra_{K(n)})$, we have
\[
\pi_0(R) = \pi_0 \map_{\cat{C}}(\smunit, R)
= \pi_0 \map_{\calg(\cat{C})}\parenth{\coprod_{m \geq 0} \smunit^{h\Sigma_m}, R}.
\]
By the Yoneda lemma, power operations of degree $0$ are therefore represented by elements of
\[
\bigoplus_{m \geq 0} \pi_0\parenth{\smunit^{h\Sigma_m}}.
\]

The goal of this chapter is to present part of Zhu's calculation in \cite{ZHU} of degree $0$ power operations in $\tmnt{2}$.

The operations $\alpha$ and $\delta$ are degree $0$ power operations defined in any $1$-semiadditively symmetric monoidal category. We will use Zhu's calculation to compute $\alpha$ and $\delta$ in $\tmnt{2}$.

In Section~\ref{suse:traceandint}, we use this geometric perspective to state a formula for integration over $BC_p$.

\subsection{The $C_p$ absolute power operation}


\begin{definition}
Let $R$ be a commutative algebra in $\tmnt{n}$. The $m$th total power operation is a map
\[
\widetilde{\psi^m} \colon \pi_0(R) \to \pi_0\parenth{R^{B\Sigma_m}}
\]
defined as follows. Given a morphism $x \colon E \to R$, we set $\widetilde{\psi^m}(x)$ to be the composite
\[
E \to E^{B\Sigma_m}
\cong (E^{\otimes m})^{h\Sigma_m}
\xrightarrow{x^{\otimes m}} (R^{\otimes m})^{h\Sigma_m}
\to R^{B\Sigma_m}.
\]
\end{definition}

These maps are called total power operations for a reason: all power operations can be constructed from them, although we do not make this precise here. We will instead focus on a variation of this construction.

\begin{definition}
Let $R$ be a commutative algebra in $\tmnt{n}$, and let $G \subset \Sigma_m$ be a subgroup of the symmetric group on $m$ letters. The $G$-total power operation is a map
\[
\widetilde{\psi^G} \colon \pi_0(R) \to \pi_0\parenth{R^{BG}}
\]
defined as follows. Given a morphism $x \colon E \to R$, we define $\widetilde{\psi^G}(x)$ to be the composite
\[
E \to E^{BG}
\cong (E^{\otimes m})^{hG}
\xrightarrow{x^{\otimes m}} (R^{\otimes m})^{hG}
\to R^{BG}.
\]
\end{definition}

\begin{proposition}
Let
\[
\widetilde{\eta} \colon \pi_0(R^{B\Sigma_p}) \to \pi_0(R^{BC_p})
\]
be the map induced by the inclusion $C_p \hookrightarrow \Sigma_p$. Then
\[
\widetilde{\psi^{C_p}} = \widetilde{\eta} \circ \widetilde{\psi^{p}}.
\]
\end{proposition}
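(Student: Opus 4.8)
The plan is to unwind both sides and observe that $\widetilde{\eta}\circ\widetilde{\psi^p}$ and $\widetilde{\psi^{C_p}}$ are literally the same composite of morphisms in $\tmnt{n}$, the only difference being at which stage one restricts the homotopy--fixed--point construction from $\Sigma_p$ to $C_p$. First, note that $\widetilde{\psi^p}$ is, in the notation of the $G$-total power operation, exactly $\widetilde{\psi^{\Sigma_p}}$, and that $\widetilde{\eta}$ is by construction the map obtained by applying $\pi_0 R^{(-)}$ to the map $B\iota\colon BC_p\to B\Sigma_p$ induced by the inclusion $\iota\colon C_p\hookrightarrow\Sigma_p$. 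Now fix $x\colon E\to R$. Unwinding the definitions, $\widetilde{\psi^{\Sigma_p}}(x)$ is the composite
\[
E \xrightarrow{\ u_{\Sigma_p}\ } E^{B\Sigma_p} \xrightarrow{\ \sim\ } (E^{\otimes p})^{h\Sigma_p} \xrightarrow{(x^{\otimes p})^{h\Sigma_p}} (R^{\otimes p})^{h\Sigma_p} \xrightarrow{\ \mu_{\Sigma_p}\ } R^{B\Sigma_p},
\]
where $u_{\Sigma_p}$ is induced by $B\Sigma_p\to *$, the equivalence is the one used in the definition of $\widetilde{\psi^p}$, and $\mu_{\Sigma_p}$ is obtained by applying $\lim_{B\Sigma_p}$ to the $\Sigma_p$-equivariant multiplication $R^{\otimes p}\to\underline{R}$ (equivariant since $R$ is commutative and the target carries the trivial action). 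The morphism $\widetilde{\psi^{C_p}}(x)$ is the same composite with $\Sigma_p$ replaced by $C_p$ throughout.

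Next I would write down the evident comparison ladder between these two composites, with vertical maps: the identity on the leftmost $E$; the restriction $E^{B\iota}$; the restriction of homotopy fixed points $\rho_M\colon M^{h\Sigma_p}\to(\iota^*M)^{hC_p}$ (natural in the $\Sigma_p$-object $M$) evaluated at $M=E^{\otimes p}$ and at $M=R^{\otimes p}$; and $R^{B\iota}$. Commutativity of the outer rectangle is precisely the identity $\widetilde{\psi^{C_p}}(x)=\widetilde{\eta}(\widetilde{\psi^p}(x))$, and since $x$ is arbitrary this proves the proposition. It then remains to check the four squares. The leftmost commutes because $u_{\Sigma_p}$ and $u_{C_p}$ are both induced by terminal maps and $BC_p\xrightarrow{B\iota}B\Sigma_p\to *$ is the terminal map of $BC_p$. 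The third commutes by naturality of $\rho$ in its object variable, applied to the $\Sigma_p$-equivariant morphism $x^{\otimes p}\colon E^{\otimes p}\to R^{\otimes p}$. The fourth commutes because $\mu_G$ is induced by the $\Sigma_p$-equivariant map $R^{\otimes p}\to\underline{R}$ (whose restriction along $\iota$ is again the multiplication), so that square is a naturality square for $\rho$, and moreover $\rho_{\underline{R}}\colon R^{B\Sigma_p}\to R^{BC_p}$ is exactly the map $R^{B\iota}$ induced on limits of the constant diagram.

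The only step requiring genuine care — and the main obstacle — is the commutativity of the second square, i.e.\ that the canonical equivalences $E^{BG}\simeq(E^{\otimes p})^{hG}$ used to define the total power operations are compatible with restriction along $\iota$. I would handle this by recalling their origin, as in the proof of the proposition from \cite{CSY1} quoted above applied to a general subgroup $G\subseteq\Sigma_p$: since $E$ is the unit object of $\tmnt{n}$, the $p$-fold tensor power $E^{\otimes p}$ equipped with its permutation action is canonically equivalent, as an object of $\fun(B\Sigma_p,\tmnt{n})$, to the constant diagram $\underline{E}$, and the equivalence $E^{BG}\simeq(E^{\otimes p})^{hG}$ is obtained by applying $\lim_{BG}$ to (the restriction of) this equivalence. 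Because $E^{\otimes p}\simeq\underline{E}$ already lives in $\fun(B\Sigma_p,\tmnt{n})$, pulling back along $B\iota$ and then taking the limit is the same as doing so directly, which is exactly the content of the second square; under these identifications it is also the naturality square for $\rho$ attached to the comultiplication $\underline{E}\to E^{\otimes p}$. With this in place all four squares commute and the proof is complete; everything else is a formal manipulation, so I expect the write-up to be short.
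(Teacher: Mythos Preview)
Your proposal is correct and takes essentially the same approach as the paper: both construct the ladder diagram with horizontal rows the defining composites of $\widetilde{\psi^{\Sigma_p}}(x)$ and $\widetilde{\psi^{C_p}}(x)$ and vertical maps the restrictions induced by $C_p\hookrightarrow\Sigma_p$, and deduce the result from commutativity. The paper simply asserts that the diagram commutes, whereas you supply the justification square by square, so your write-up is more detailed than what appears in the paper; in particular, your discussion of the second square (compatibility of the equivalences $E^{BG}\simeq(E^{\otimes p})^{hG}$ with restriction) is a point the paper leaves implicit.
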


\begin{proof}
These maps fit into the following commutative diagram, in which all vertical maps are induced by the inclusion $C_p \hookrightarrow \Sigma_p$. The two vertical composites are $\widetilde{\psi^{p}}(x)$ and $\widetilde{\psi^{C_p}}(x)$, respectively, and the claim follows.

\[
\begin{tikzcd}
	E & {E^{B\Sigma_p}\cong (E^{\otimes p})^{h\Sigma_p}} & {(R^{\otimes p})^{h\Sigma_p}} & {R^{B\Sigma_p}} \\
	E & {E^{BC_p}\cong (E^{\otimes p})^{hC_p}} & {(R^{\otimes p})^{hC_p}} & {R^{BC_p}}
	\arrow[from=1-1, to=2-1]
	\arrow[from=1-2, to=2-2]
	\arrow[from=1-3, to=2-3]
	\arrow[from=1-4, to=2-4]
	\arrow[from=1-1, to=1-2]
	\arrow[from=1-2, to=1-3]
	\arrow[from=1-3, to=1-4]
	\arrow[from=2-1, to=2-2]
	\arrow[from=2-2, to=2-3]
	\arrow[from=2-3, to=2-4]
\end{tikzcd}
\]
\end{proof}

The total power operation is not a ring homomorphism, although it respects multiplication. The obstruction is additivity. One therefore considers the additive total power operation, which is a ring homomorphism. We state its definition here.

\begin{definition}\label{def:transfer}
Let $\cat{C}$ be a $0$-semiadditive category, let $X \in \cat{C}$ be an object, and let $f \colon A \to B$ be a covering map in $\spaces$. The map $f$ is $0$-finite, and hence the counit
\[
c \colon f_! f^* \to \id_{\cat{C}^B}
\]
induces a morphism defined by the composite
\begin{align*}
X^A
&= (\pi_A)_* \pi_A^* X
\simeq (\pi_B)_* f_* f^* \pi_B^* X
\simeq (\pi_B)_* f_! f^* \pi_B^* X
\xrightarrow{c}
(\pi_B)_* \pi_B^* X
\simeq X^B.
\end{align*}
This morphism is called the transfer map.
\end{definition}
\begin{proposition}
In the setting of Definition~\ref{def:transfer}, suppose further that $\cat{C}$ is $\infty$-semiadditive and suppose that $B$ is $\pi$-finite. Then, by Proposition~\ref{prop:2-3}, the space $A$ is also $\pi$-finite, and the transfer map agrees with the map induced by the span
\[
A \xleftarrow{\id} A \xrightarrow{f} B
\]
under the equivalence of Proposition~\ref{prop:canmonoid}.
\end{proposition}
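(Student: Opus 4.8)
The plan is to unwind both maps into explicit composites of the Kan extension functors along the factorization $\pi_A = \pi_B\circ f$, and then to identify them by a single naturality square for the norm transformation, together with the compatibility of norm maps with composition.

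First I would record the $\pi$-finiteness of $A$: a covering map is $0$-finite, with fibres finite sets (hence $0$-finite), so if $B$ is $m$-finite then, applying Proposition~\ref{prop:2-3} to the fibre sequence $f^{-1}(b)\to A_0\to B_0$ over each connected component, $A$ is $\max(m,0)$-finite, in particular $\pi$-finite. Next I would rewrite the transfer map of Definition~\ref{def:transfer}: using $\pi_A^* = f^*\pi_B^*$ and $(\pi_A)_* = (\pi_B)_* f_*$, it is exactly $(\pi_B)_*$ applied to the morphism
\[
\varphi\colon f_* f^*\pi_B^*X \xrightarrow{\nm_f^{-1}} f_! f^*\pi_B^*X \xrightarrow{c} \pi_B^*X
\]
of $\cat{C}^B$, where $c$ is the counit of $f_!\dashv f^*$ and $\nm_f\colon f_!\to f_*$ is the $0$-finite norm, invertible by $0$-semiadditivity; that is, under the identification $X^A = (\pi_A)_*\pi_A^*X = (\pi_B)_* f_* f^*\pi_B^*X$ the transfer is $(\pi_B)_*(\varphi)$.

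Then I would spell out the span map. The span $A\xleftarrow{\id}A\xrightarrow{f}B$ is the image of $f$ under $\spaces_{\pi\text{-}\fin}\hookrightarrow\spaan(\spaces_{\pi\text{-}\fin})$, so by the description of the canonical $\infty$-commutative monoid structure recalled after Proposition~\ref{prop:canmonoid}, according to which right-way maps are sent to the induced maps of colimits, the resulting map $X^A\to X^B$ is the composite
\[
X^A \xrightarrow{\nm_A^{-1}} X_A \xrightarrow{f_!} X_B \xrightarrow{\nm_B} X^B,
\]
where $f_! = (\pi_B)_!(c)$ is the canonical comparison of colimits and $\nm_A,\nm_B$ are the norm isomorphisms of Proposition~\ref{prop:canmonoid}. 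I would then invoke the compatibility of the norm with composition, $\nm_{\pi_A}\simeq\nm_{\pi_B}\circ(\pi_B)_!(\nm_f)$ for $A\xrightarrow{f}B\xrightarrow{\pi_B}*$ (a standard property of the norm, provable from Definition~\ref{def:normmap} via Beck--Chevalley; cf.\ \cite{CSY1}), which gives $\nm_A^{-1} = (\pi_B)_!(\nm_f^{-1})\circ\nm_{\pi_B}^{-1}$. Substituting, and using functoriality of $(\pi_B)_!$ to write $(\pi_B)_!(c)\circ(\pi_B)_!(\nm_f^{-1}) = (\pi_B)_!(\varphi)$, turns the span map into $\nm_{\pi_B}\circ(\pi_B)_!(\varphi)\circ\nm_{\pi_B}^{-1}$.

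At this point the argument closes: naturality of the norm transformation $\nm_{\pi_B}\colon(\pi_B)_!\Rightarrow(\pi_B)_*$ applied to the morphism $\varphi$ of $\cat{C}^B$ gives $\nm_{\pi_B}\circ(\pi_B)_!(\varphi) = (\pi_B)_*(\varphi)\circ\nm_{\pi_B}$, hence $\nm_{\pi_B}\circ(\pi_B)_!(\varphi)\circ\nm_{\pi_B}^{-1} = (\pi_B)_*(\varphi)$, which is precisely the transfer. I expect the only real obstacle to be assembling the two inputs in a form precise enough to be chained safely: the exact output of the equivalence of Proposition~\ref{prop:canmonoid} on the right-way map $f$ --- that the informal recipe ``right-way maps go to colimits'' is literally the composite $\nm_B\circ f_!\circ\nm_A^{-1}$ --- and the compatibility $\nm_{\pi_A}\simeq\nm_{\pi_B}\circ(\pi_B)_!(\nm_f)$ of norms with composition. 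Everything after that is the single naturality square above.
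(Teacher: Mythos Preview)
Your argument is correct. You unwind both maps through the factorization $\pi_A=\pi_B\circ f$, reduce the comparison to the single naturality square for $\nm_{\pi_B}$ at the morphism $\varphi=c\circ\nm_f^{-1}$ in $\cat{C}^B$, and use the multiplicativity of the norm along compositions; the two inputs you flag as possible obstacles (the explicit form of the span action on right-way maps and the identity $\nm_{\pi_A}\simeq\nm_{\pi_B}\circ(\pi_B)_!(\nm_f)$) are both standard and appear in the references the paper already uses.

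The paper does not argue this directly: its entire proof is the sentence ``This is dual to \cite[Lemma~3.8]{HAR}.'' That lemma in Harpaz identifies how the canonical monoid structure treats the wrong-way leg of a span, and the dual statement handles the right-way leg; your computation is essentially what one obtains by unpacking that citation. So your route is a genuine, self-contained proof where the paper outsources; what the citation buys is brevity and a pointer to the general framework, while what your approach buys is that the reader sees exactly which properties of the norm (naturality and multiplicativity along compositions) are doing the work.
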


\begin{proof}
This is dual to \cite[Lemma~3.8]{HAR}.
\end{proof}

\begin{definition}
The transfer ideal $T_{\Sigma_m} \subset \pi_0 E^{B\Sigma_m}$ is the ideal generated by the images of the transfer maps induced by all subgroups
\[
\Sigma_r \times \Sigma_t \subset \Sigma_m
\]
with $r+t=m$ and $r,t<m$.
\end{definition}

\begin{proposition}
The composite
\[
\psi^m \colon \pi_0E \to \pi_0 E^{B\Sigma_m} \to \pi_0 E^{B\Sigma_m}/T_{\Sigma_m}
\]
is a ring homomorphism.
\end{proposition}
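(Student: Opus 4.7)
The plan is to verify that the composite $\psi^m$ preserves $0$, $1$, products, and sums modulo $T_{\Sigma_m}$. Preservation of units and zeros, as well as multiplicativity, already hold at the level of $\widetilde{\psi^m}$; the real content of the proposition is additivity modulo the transfer ideal.

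For multiplicativity I would argue directly: given $x,y\colon E\to E$, the morphisms $(xy)^{\otimes m}$ and $x^{\otimes m}\cdot y^{\otimes m}$ (the latter product taken in the commutative algebra $E^{\otimes m}$) both unwind, via the symmetric-monoidal coherence $E^{\otimes m}\otimes E^{\otimes m}\simeq(E\otimes E)^{\otimes m}$, to the same composite $E^{\otimes m}\to(E\otimes E)^{\otimes m}\xrightarrow{(x\otimes y)^{\otimes m}}(E\otimes E)^{\otimes m}\xrightarrow{\mu^{\otimes m}}E^{\otimes m}$. Applying the lax symmetric monoidal functor $(-)^{h\Sigma_m}$ and post-composing with the canonical map to $E^{B\Sigma_m}$ yields $\widetilde{\psi^m}(xy)=\widetilde{\psi^m}(x)\cdot\widetilde{\psi^m}(y)$; the preservation of $0$ and $1$ is analogous.

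For additivity modulo $T_{\Sigma_m}$, I would expand $(x+y)^{\otimes m}$ by bilinearity of $\otimes$ as a sum of morphisms $E^{\otimes m}\to E^{\otimes m}$ indexed by subsets $S\subseteq\{1,\dots,m\}$, where the summand at $S$ applies $x$ at the factors indexed by $S$ and $y$ at those indexed by $S^c$. The $\Sigma_m$-action by permutation partitions these summands into orbits indexed by $r=|S|$, each with stabilizer conjugate to $\Sigma_r\times\Sigma_t$ where $t=m-r$. Invoking the span description of the transfer from the proposition following Definition~\ref{def:transfer}, I would identify the $\Sigma_m$-equivariant orbit sum attached to a fixed $r$ with the image of the $(\Sigma_r\times\Sigma_t)$-equivariant morphism $x^{\otimes r}\otimes y^{\otimes t}$ under the transfer along the $0$-finite cover $B(\Sigma_r\times\Sigma_t)\to B\Sigma_m$. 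The extreme orbits $r=0$ and $r=m$ give back $\widetilde{\psi^m}(y)$ and $\widetilde{\psi^m}(x)$, and every intermediate orbit contribution lies in the image of $\pi_0 E^{B(\Sigma_r\times\Sigma_t)}\to\pi_0 E^{B\Sigma_m}$, hence in $T_{\Sigma_m}$.

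The main obstacle is the identification of a $\Sigma_m$-orbit sum of morphisms with the image of the semiadditive transfer. Once one combines Proposition~\ref{prop:canmonoid} with the span-level description of the transfer, the identification reduces to the orbit-stabilizer decomposition of the finite $\Sigma_m$-set $\Sigma_m/(\Sigma_r\times\Sigma_t)$ and becomes essentially bookkeeping; nevertheless the careful matching of the categorical and combinatorial pictures is the step that deserves the most care.
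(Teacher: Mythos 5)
Your proposal is correct in outline, but it takes a genuinely different route from the paper: the paper disposes of this proposition in one line by citing \cite[p.~22, Proposition~2.2]{HINFTY}, which states precisely that the deviation from additivity of the total power operation lies in the transfer ideal, whereas you re-derive that result from scratch. Your derivation is the standard one: expand $(x+y)^{\otimes m}$ over subsets $S\subseteq\{1,\dots,m\}$, organize the summands into $\Sigma_m$-orbits with stabilizers $\Sigma_r\times\Sigma_t$, and identify the orbit sum for $0<r<m$ with the transfer along $B(\Sigma_r\times\Sigma_t)\to B\Sigma_m$ applied to the K\"unneth product $\widetilde{\psi^r}(x)\cdot\widetilde{\psi^t}(y)$. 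Be aware that the step you correctly flag as delicate --- passing from the non-equivariant sum over subsets to a $\Sigma_m$-equivariant orbit decomposition whose homotopy fixed points are computed by restriction--followed--by--transfer --- is exactly the content of the cited proposition; the clean way to make it precise is to decompose $(E\oplus E)^{\otimes m}$ equivariantly as a sum of objects induced from the subgroups $\Sigma_r\times\Sigma_t$ and then use that fixed points of an induced object, mapped back via the counit $f_!f^*\to\id$ of Definition~\ref{def:transfer}, is the transfer. What your approach buys is self-containedness within the paper's semiadditive span formalism (and it makes visible why the extreme terms $r=0,m$ survive as $\widetilde{\psi^m}(y)$ and $\widetilde{\psi^m}(x)$); what the paper's citation buys is brevity and the reassurance that the identity holds for any $H^\infty$ ring spectrum without re-verifying the equivariant bookkeeping.
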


\begin{proof}
By \cite[p.~22, Proposition~2.2]{HINFTY}, the failure of $\psi^m$ to be additive lies in the transfer ideal.
\end{proof}

When $m=p$ is prime, there is a compatible construction for the subgroup $C_p$.

\begin{definition}
The transfer ideal $T_{C_p} \subset \pi_0 E^{BC_p}$ is the ideal generated by the image of the transfer map induced by the inclusion $\explicitset{e} \subset C_p$.
\end{definition}

\begin{proposition}
Under the map $\widetilde{\eta}$, the image of
\[
T_{\Sigma_p} \subset \pi_0 E^{B\Sigma_p}
\]
lands in
\[
T_{C_p} \subset \pi_0 E^{BC_p}.
\]
Consequently, $\widetilde{\eta}$ induces a ring homomorphism
\[
\eta \colon \pi_0 E^{B\Sigma_p}/T_{\Sigma_p} \to \pi_0 E^{BC_p}/T_{C_p}.
\]
\end{proposition}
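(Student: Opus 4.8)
The plan is to reduce to the generators of $T_{\Sigma_p}$ and then invoke a double coset formula. Since $\widetilde{\eta}$ is a ring homomorphism — it is induced by the map of spaces $BC_p\to B\Sigma_p$ — and $T_{\Sigma_p}$ is by definition the ideal generated by the images of the transfers $\mathrm{tr}^{\Sigma_p}_{\Sigma_r\times\Sigma_t}$ with $r+t=p$ and $1\le r,t<p$, it suffices to show that $\widetilde{\eta}\bigl(\mathrm{tr}^{\Sigma_p}_{\Sigma_r\times\Sigma_t}(y)\bigr)\in T_{C_p}$ for every $y\in\pi_0 E^{B(\Sigma_r\times\Sigma_t)}$.

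First I would phrase restriction and transfer via the $\infty$-commutative monoid (Mackey) structure on $A\mapsto E^A$ supplied by Proposition~\ref{prop:canmonoid}: for a finite-index subgroup inclusion $\iota\colon H\hookrightarrow G$ the map $B\iota\colon BH\to BG$ is a $0$-finite covering, restriction is the map $(B\iota)^*$ (so $\widetilde{\eta}=(B\iota)^*$ for $\iota\colon C_p\hookrightarrow\Sigma_p$), and $\mathrm{tr}^G_H$ is the wrong-way map associated to the span $BH\xleftarrow{\mathrm{id}}BH\xrightarrow{B\iota}BG$. All spaces in sight are $\pi$-finite (Corollary~\ref{cor:closedunderpb}), so by functoriality of $E^{B(-)}$ on spans the composite $\widetilde{\eta}\circ\mathrm{tr}^{\Sigma_p}_{\Sigma_r\times\Sigma_t}$ is the map associated to the composite span, obtained by pulling back the cospan $B(\Sigma_r\times\Sigma_t)\to B\Sigma_p\leftarrow BC_p$. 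Combining this with the homotopy decomposition
\[
BC_p\times_{B\Sigma_p}B(\Sigma_r\times\Sigma_t)\ \simeq\ \coprod_{[g]\in C_p\backslash\Sigma_p/(\Sigma_r\times\Sigma_t)}B\bigl(C_p\cap g(\Sigma_r\times\Sigma_t)g^{-1}\bigr)
\]
and the additivity of wrong-way maps over disjoint unions yields the double coset formula
\[
\widetilde{\eta}\bigl(\mathrm{tr}^{\Sigma_p}_{\Sigma_r\times\Sigma_t}(y)\bigr)\ =\ \sum_{[g]}\ \mathrm{tr}^{C_p}_{C_p\cap g(\Sigma_r\times\Sigma_t)g^{-1}}\bigl(\rho_g(y)\bigr),
\]
where $\rho_g$ is the appropriate restriction-and-conjugation map.

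The arithmetic is then immediate: $\bigl|C_p\cap g(\Sigma_r\times\Sigma_t)g^{-1}\bigr|$ divides $\gcd\bigl(p,\,r!\,t!\bigr)$, which equals $1$ since $r,t<p$ and $p$ is prime; hence every intersection group is trivial, each summand equals $\mathrm{tr}^{C_p}_{\{e\}}(z_g)$ for some $z_g\in\pi_0 E$, and so lies in the image of the transfer $\mathrm{tr}^{C_p}_{\{e\}}\colon\pi_0 E\to\pi_0 E^{BC_p}$. That image is contained in $T_{C_p}$, and $T_{C_p}$, being an ideal, is closed under the finite sum, so $\widetilde{\eta}(\mathrm{tr}^{\Sigma_p}_{\Sigma_r\times\Sigma_t}(y))\in T_{C_p}$. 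Consequently $\widetilde{\eta}$ descends to a well-defined $\eta\colon\pi_0 E^{B\Sigma_p}/T_{\Sigma_p}\to\pi_0 E^{BC_p}/T_{C_p}$, and it is a ring homomorphism because it is the unique factorization of the ring homomorphism $\pi_0 E^{B\Sigma_p}\xrightarrow{\widetilde{\eta}}\pi_0 E^{BC_p}\to\pi_0 E^{BC_p}/T_{C_p}$ through the quotient.

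The main obstacle is not the arithmetic but making the double coset formula rigorous in this higher semiadditive framework: one needs that the Mackey structure on $E^{B(-)}$ coming from Proposition~\ref{prop:canmonoid} satisfies base change against the cospan above, and that under the decomposition of $BC_p\times_{B\Sigma_p}B(\Sigma_r\times\Sigma_t)$ into classifying spaces of intersection subgroups the push-forward along the first projection is the sum of the transfers from the individual subgroups. Both are formal consequences of the span-functoriality recalled after Definition~\ref{def:transfer} together with the compatibility of wrong-way maps with disjoint unions, but they deserve to be spelled out; alternatively one may simply cite the double coset formula for transfers in $K(n)$-local (or equivariant stable) homotopy theory.
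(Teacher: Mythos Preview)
Your proposal is correct and follows essentially the same route as the paper: both compose the transfer span with the restriction span, identify the resulting pullback $BC_p\times_{B\Sigma_p}B(\Sigma_r\times\Sigma_t)$ with a disjoint union over double cosets, and then observe that each intersection $C_p\cap g(\Sigma_r\times\Sigma_t)g^{-1}$ is trivial (the paper argues via transitive versus intransitive permutations, you via the divisibility $\gcd(p,r!t!)=1$), so every summand is a transfer from the trivial subgroup. The only cosmetic difference is that the paper phrases everything directly in the span category and even computes the number of double cosets as $\binom{p}{r}/p$, whereas you invoke the classical double coset formula and (rightly) flag that its validity here rests on the span functoriality established after Definition~\ref{def:transfer}.
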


\begin{proof}
We show that the composite
\[
E^{B(\Sigma_r \times \Sigma_t)} \longrightarrow E^{B\Sigma_p} \xrightarrow{\;\widetilde{\eta}\;} E^{BC_p},
\]
given by the transfer map followed by $\widetilde{\eta}$, factors through the image of the transfer map
\[
E^{B\explicitset{e}} \longrightarrow E^{BC_p}.
\]

The map $\widetilde{\eta}$ is induced by the span
\[
B\Sigma_p \longleftarrow BC_p \longrightarrow BC_p,
\]
while the transfer map for $E^{B\Sigma_p}$ is induced by the span
\[
B(\Sigma_r \times \Sigma_t) \longleftarrow B(\Sigma_r \times \Sigma_t) \longrightarrow B\Sigma_p.
\]
Computing their composite in the span category yields the diagram

\[
\begin{tikzcd}
	&& {(\Sigma_r\times \Sigma_t) \backslash \Sigma_p/C_p} \\
	& {B\Sigma_r\times \Sigma_t} && {BC_p} \\
	{B\Sigma_r\times \Sigma_t} && {B\Sigma_p} && {BC_p}
	\arrow[from=2-2, to=3-1, equal]
	\arrow[from=2-2, to=3-3]
	\arrow[from=1-3, to=2-2]
	\arrow[from=1-3, to=2-4]
	\arrow[from=2-4, to=3-3]
	\arrow[from=2-4, to=3-5, equal]
	\arrow["\lrcorner"{anchor=center, pos=0.125, rotate=-45}, draw=none, from=1-3, to=3-3]
\end{tikzcd}
\]

The space $(\Sigma_r \times \Sigma_t)\backslash \Sigma_p / C_p$ appearing at the top is the double coset space, with quotients taken in the $\infty$-categorical sense.  
If $H,K \leq G$ are subgroups of a finite group, then the size of a double coset $HgK$ is
\[
\cardinality{H}\,[K : K \cap g^{-1}Hg].
\]
Taking $H = C_p$ and $K = \Sigma_r \times \Sigma_t$, we find that every double coset has size $p r! t!$. Indeed, $\Sigma_r \times \Sigma_t$ contains no transitive elements, while every nontrivial element of $g^{-1} C_p g$ is transitive. Since the action of $C_p$ on $(\Sigma_r \times \Sigma_t)\backslash \Sigma_p$ is free, the double coset space is homotopy equivalent to a finite set of cardinality $\binom{p}{r}/p$.

It follows that the composite span is equal to $\binom{p}{r}/p$ copies of the span
\[
B(\Sigma_r \times \Sigma_t) \longleftarrow * \longrightarrow BC_p.
\]
The right-hand map in this span is the transfer map defining $T_{C_p}$. Hence the image of $T_{\Sigma_p}$ under $\widetilde{\eta}$ is contained in $T_{C_p}$.
\end{proof}

\begin{corollary}
The composite
\[
\psi^{C_p} \colon \pi_0E \to \pi_0 E^{BC_p} \to \pi_0 E^{BC_p}/T_{C_p}
\]
is a ring homomorphism.
\end{corollary}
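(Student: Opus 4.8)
The plan is to deduce the statement formally from three facts already in hand: the factorization $\widetilde{\psi^{C_p}} = \widetilde{\eta}\circ\widetilde{\psi^p}$, the inclusion $\widetilde{\eta}(T_{\Sigma_p}) \subseteq T_{C_p}$ proved in the preceding proposition, and the fact that $\psi^p \colon \pi_0 E \to \pi_0 E^{B\Sigma_p}/T_{\Sigma_p}$ is a ring homomorphism.

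First I would record that $\widetilde{\eta}$ is itself a ring homomorphism. It is the map on $\pi_0$ of the cotensor $E^{(-)}$ applied to the map of cocommutative coalgebras $\suspension^\infty_+ BC_p \to \suspension^\infty_+ B\Sigma_p$ induced by the group inclusion $C_p \hookrightarrow \Sigma_p$; cotensoring a fixed commutative algebra against a map of coalgebras respects the induced commutative ring structures, so $\widetilde{\eta}$ is both multiplicative and additive. Write $q_{\Sigma_p}$ and $q_{C_p}$ for the two quotient projections. By the preceding proposition $\widetilde{\eta}(T_{\Sigma_p}) \subseteq T_{C_p}$, which is precisely the assertion that there is a (necessarily unique, necessarily ring) homomorphism $\eta$ with $q_{C_p}\circ\widetilde{\eta} = \eta\circ q_{\Sigma_p}$.

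Next I would chase the factorization through this relation. Unwinding the definition, $\psi^{C_p} = q_{C_p}\circ\widetilde{\psi^{C_p}}$, and therefore
\[
\psi^{C_p} \;=\; q_{C_p}\circ\widetilde{\eta}\circ\widetilde{\psi^p} \;=\; \eta\circ q_{\Sigma_p}\circ\widetilde{\psi^p} \;=\; \eta\circ\psi^p .
\]
Both $\eta$ and $\psi^p$ are ring homomorphisms, hence so is their composite $\psi^{C_p}$, which is exactly the claim.

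I do not expect any real obstacle here: all of the genuine content has been absorbed into the preceding proposition, whose double-coset computation is what forces $\widetilde{\eta}(T_{\Sigma_p})$ into $T_{C_p}$. If one wanted a proof that does not pass through $\psi^p$, one could instead imitate the $\Sigma_p$ argument directly — analyze the additivity defect $\widetilde{\psi^{C_p}}(x+y)-\widetilde{\psi^{C_p}}(x)-\widetilde{\psi^{C_p}}(y)$ and show it lies in $T_{C_p}$, using the analogue of \cite{HINFTY} for $C_p$ together with restriction of transfers along $C_p\hookrightarrow\Sigma_p$ — but that route only reproves the same combinatorics, so I would keep the factorization proof.
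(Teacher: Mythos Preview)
Your proof is correct and matches the paper's approach exactly: the paper's one-line proof simply states that $\psi^{C_p}$ is the composite of the ring homomorphisms $\eta$ and $\psi^p$, which is precisely the factorization $\psi^{C_p}=\eta\circ\psi^p$ you derived.
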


\begin{proof}
This map is the composite of the ring homomorphisms $\eta$ and $\psi^{p}$.
\end{proof}

The ring homomorphisms $\psi^{C_p}$, $\psi^{m}$, and $\eta$ admit geometric descriptions due to Rezk \cite{REZ}. These descriptions are the subject of the next subsection and will be used to compute the corresponding power operations.

\subsection{The moduli space of elliptic curves and the total power operation in $\tmnt{2}$}

By construction, the ring $\pi_0E$ together with its associated formal group is the universal deformation ring of a height $2$ formal group law. In \cite{STR} and later in \cite{REZ}, Strickland and Rezk gave a detailed analysis of power operations in the categories $\tmnt{n}$. Their results are very general; here we recall only the statements needed for our purposes.

\begin{proposition}\label{prop:rezkstrickland} Let $E$ be Morava $E$-theory of any height $n$.
\begin{enumerate}
    \item\label{prop:rezkstrickland:goodgroups} \cite[Sections~3.6 \& 8.6]{STR}  
    The rings
    \[
    \pi_0 E^{B\Sigma_p},\quad
    \pi_0 E^{B\Sigma_p}/T_{\Sigma_p},\quad
    \pi_0 E^{BC_p},\quad
    \pi_0 E^{BC_p}/T_{C_p}
    \]
    are finite free $\pi_0(E)$-modules.

    \item \cite[Section~9.2]{STR}
    The formal scheme $\spf\parenth{\pi_0 E^{B\Sigma_p}/T_{\Sigma_p}}$ classifies degree $p$ isogenies of the Quillen formal group associated to $E$. Equivalently, a local morphism of rings
    \[
    \pi_0 E^{B\Sigma_p}/T_{\Sigma_p} \to R
    \]
    under $\pi_0(E)$ corresponds to a formal group $G$ over $R$ together with a degree $p$ isogeny
    \[
    Q \times_{\spf \pi_0 E} \spf R \to G,
    \]
    where $Q$ denotes the Quillen formal group over $\pi_0(E)$.

    \item \cite[Proof of Theorem~3.19]{REZ}
    The ring $\pi_0 E^{BC_p}/T_{C_p}$ represents the universal point of order $p$ of the Quillen formal group associated to $E$. Moreover,
    \[
    \pi_0 E^{BC_p} \cong \pi_0(E)[[x]]/[p](x),
    \]
    where $[p](x)$ is the $p$-series of the Quillen formal group.

    \item\label{prop:rezkstrickland:agreeswitheta}
    The ring homomorphism
    \[
    \eta \colon \pi_0 E^{B\Sigma_p}/T_{\Sigma_p} \to \pi_0 E^{BC_p}/T_{C_p},
    \]
    induced by the inclusion $C_p \hookrightarrow \Sigma_p$, sends a point of order $p$ given by $x$ to the isogeny
    \[
    Q \to Q/x,
    \]
    where $Q/x$ denotes the quotient of $Q$ by the order $p$ subgroup generated by $x$.
\end{enumerate}
\end{proposition}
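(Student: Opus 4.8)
The plan is to establish the four assertions roughly in order of increasing depth, bootstrapping from explicit cohomology computations to the modular descriptions; parts~(1)--(4) are the theorems of Strickland \cite{STR} and Rezk \cite{REZ} cited in the statement, and what follows is the architecture I would use to organize and combine them. I would begin with the concrete part of~(3), the identification $\pi_0 E^{BC_p} \cong \pi_0(E)[[x]]/[p](x)$. The space $BC_p$ is the infinite lens space, and the projection $BC_p \to \complexprojective$ is a circle bundle classified by the $p$-th power map $S^1 \to S^1$, whose $E$-theoretic Euler class is exactly $[p](t)$ for $t$ the orientation class of $E^0\complexprojective = \pi_0(E)[[t]]$. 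Since $E$ is even periodic the Gysin sequence degenerates and yields the claimed presentation with $x$ the image of $t$; this exhibits $\spf \pi_0 E^{BC_p}$ as the $p$-torsion subscheme $Q[p]$ of the Quillen formal group $Q$.

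Next, part~(1). The module $\pi_0 E^{BC_p}$ is finite free over $\pi_0(E)$ of rank $p^n$ because $[p](x)$ is, up to a unit, a Weierstrass polynomial of degree $p^n$ by definition of height $n$. For $\Sigma_p$ the subgroup $C_p$ is a $p$-Sylow of index $(p-1)!$, a unit in $\pi_0(E)$; hence $\mathrm{tr}\circ\mathrm{res}$ on $\pi_0 E^{B\Sigma_p}$ is multiplication by a unit, so $\pi_0 E^{B\Sigma_p}$ is a retract of the finite free module $\pi_0 E^{BC_p}$, hence projective, hence (over the local ring $\pi_0 E$) finite free, and the same argument shows $E^{\mathrm{odd}}(B\Sigma_p)=0$. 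Freeness of the two transfer-ideal quotients does not follow formally; here I would invoke Strickland's explicit bases from \cite{STR}, or else deduce it a posteriori from the flatness built into the modular interpretations in~(2) and~(3).

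For part~(2) I would appeal to Strickland's theorem that $\spf(\pi_0 E^{B\Sigma_p}/T_{\Sigma_p})$ represents the scheme $\mathrm{Sub}_p(Q)$ of order-$p$ subgroup schemes of $Q$, equivalently degree-$p$ isogenies out of $Q$. The shape of that proof is: the total power operation provides a ring map $\pi_0(E) \to \pi_0 E^{B\Sigma_p}/T_{\Sigma_p}$ under which the target, via the $\Sigma_p$-action permuting the $p$ tensor factors, carries the universal example of a formal group equipped with a degree-$p$ isogeny, and one verifies the universal property by deformation theory over Lubin--Tate space. This is the substantial input, and I do not expect a short route to it. Granting it, the remaining formula of~(3) is immediate: dividing $\pi_0 E^{BC_p} = \mathcal{O}_{Q[p]}$ by the transfer ideal $T_{C_p}$, generated by the image of $\mathrm{tr}_e^{C_p}(1)$, which works out to the reduced $p$-series $[p](x)/x$ up to a unit, cuts out the closed subscheme of points of exact order $p$, since every nonzero point of $Q[p]$ has exact order $p$.

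Finally, part~(4). The map $\eta$ is induced by restriction along $C_p \hookrightarrow \Sigma_p$, so on $\spf$ it corresponds to a morphism from the universal order-$p$ point of $Q$ to the universal degree-$p$ isogeny of $Q$, hence to a natural transformation of the functors these represent. It therefore suffices to identify that transformation on $R$-points, and the natural candidate --- send a point $x$ of order $p$ to the isogeny $Q \to Q/\langle x\rangle$ with kernel the order-$p$ subgroup generated by $x$ --- is forced by the compatibility of the power operation with restriction to subgroups, for which I would rely on the portion of \cite{REZ} cited in the statement. The main obstacle throughout is part~(2): the modular interpretation of $\pi_0 E^{B\Sigma_p}/T_{\Sigma_p}$ is the heart of Strickland's work and is not accessible by the soft higher-semiadditivity arguments available to us, whereas everything else is either a standard Gysin computation or formal once~(2) is in hand.
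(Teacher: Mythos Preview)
Your proposal is correct, and considerably more substantive than what the paper actually does. In the paper, parts~(1)--(3) are not proved at all beyond the citations already embedded in the statement; the only proof environment addresses part~(4), and it consists of a single sentence pointing to \cite[Proposition~8.1]{StapletonHuan} for a more general result phrased in compatible language. So the paper's strategy is pure delegation, whereas you supply the standard architecture: the Gysin sequence for the presentation of $\pi_0 E^{BC_p}$, the Sylow retract argument for freeness of $\pi_0 E^{B\Sigma_p}$, the honest acknowledgment that freeness of the transfer quotients and the modular interpretation of $\mathrm{Sub}_p(Q)$ are the hard Strickland inputs, and the naturality argument for~(4). One small mismatch: for part~(4) you say you would rely on the portion of \cite{REZ} cited in the statement, but the statement carries no citation for~(4); the paper supplies one to Huan--Stapleton rather than to Rezk, precisely because the identification of $\eta$ with the ``generate a subgroup'' map is cleaner there. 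Your sketch of~(4) via representability and naturality is the right shape, but pinning down that the restriction map really induces the expected transformation on moduli functors is exactly the content being outsourced.
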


\begin{proof}[Proof of \ref{prop:rezkstrickland:agreeswitheta}]
See \cite[Proposition~8.1]{StapletonHuan} for a much more general statement which is closer to our language than  older treatments.

\end{proof}

This provides a concrete method for computing these maps for the case of height $n=2$. Let $C/\finitefield_q$ be a supersingular elliptic curve, and let $X$ denote the moduli space of elliptic curves. We write
\[
[C] \colon \spec \finitefield_q \to X
\]
for the morphism classifying $C$.

Let $X_0(p)$ be the moduli space of elliptic curves equipped with a finite subgroup of order $p$ (equivalently, a pair of elliptic curves $C,C'$ together with an isogeny $C \to C'$ of degree $p$), and let $X_1(p)$ be the moduli space of elliptic curves equipped with a point of order $p$.

There are natural morphisms
\[
X_0(p) \to X
\quad\text{and}\quad
X_1(p) \to X
\]
forgetting the level structure. There is also a morphism
\[
X_1(p) \to X_0(p)
\]
sending a point of order $p$ to the subgroup it generates.

The completion of $X$ at $C$, denoted $X_C^\wedge$, is given by $\spf \pi_0(E)$, since $\pi_0(E)$ is the universal deformation ring of a supersingular elliptic curve, equivalently of its formal group law. It follows that:

\begin{corollary}\label{cor:complofmodcurves}
\begin{enumerate}
    \item The formal scheme $\spf\parenth{\pi_0 E^{B\Sigma_p}/T_{\Sigma_p}}$ is the completion of $X_1(p)$ at the preimage of $[C]$ under the morphism $X_1(p)\to X$.

    \item The ring $\pi_0 E^{BC_p}/T_{C_p}$ is the completion of $X_0(p)$ at the preimage of $[C]$ under the morphism $X_0(p)\to X$.

    \item The ring homomorphism
    \[
    \eta \colon \pi_0 E^{B\Sigma_p}/T_{\Sigma_p} \to \pi_0 E^{BC_p}/T_{C_p}
    \]
    is the completion of the morphism $X_1(p)\to X_0(p)$ at the preimages of $[C]$.
\end{enumerate}
\end{corollary}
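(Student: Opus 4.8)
The plan is to deduce all three statements by combining Proposition~\ref{prop:rezkstrickland} with the identification $X_C^\wedge = \spf \pi_0(E)$, using that formal completion commutes with the base change along $[C]\colon \spec\finitefield_q \to X$ in an appropriate sense. For item (1), recall from Proposition~\ref{prop:rezkstrickland}\eqref{prop:rezkstrickland:goodgroups} that $\pi_0 E^{B\Sigma_p}/T_{\Sigma_p}$ is a finite free $\pi_0(E)$-module, and from part (2) of that proposition that it classifies degree $p$ isogenies out of the Quillen formal group $Q$ over $\pi_0(E)$. Since $Q$ is the universal deformation of the formal group of $C$, a degree $p$ isogeny $Q \to G$ over a complete local $\pi_0(E)$-algebra $R$ is the same datum as a deformation to $R$ of an elliptic curve near $C$ together with a degree $p$ isogeny; this is exactly a point of $X_1(p)$ lying over the formal neighbourhood of $[C]$ in $X$. (Here one uses the Serre--Tate theorem: deforming the supersingular elliptic curve $C$ is the same as deforming its $p$-divisible group, i.e. its formal group, so the deformation functors of $C$ in $X$ and of $\widehat{C_e}$ agree, and similarly with level structure.) Hence $\spf(\pi_0 E^{B\Sigma_p}/T_{\Sigma_p})$ represents the functor of deformations of $(C, \text{subgroup of order }p)$, which is by definition the completion of $X_1(p)$ at the fibre over $[C]$. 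Note the fibre over $[C]$ may consist of several points, corresponding to the several order-$p$ subgroups of $C$ over $\overline{\finitefield}_q$; the completion is taken at this finite closed subscheme, which matches the fact that $\pi_0 E^{B\Sigma_p}/T_{\Sigma_p}$ need not be local over $\pi_0(E)$.

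Item (2) follows the same pattern using Proposition~\ref{prop:rezkstrickland}(3): $\pi_0 E^{BC_p}/T_{C_p}$ represents the universal point of order $p$ of $Q$, so by Serre--Tate it represents deformations of $(C, \text{point of order }p)$, which is the completion of $X_0(p)$ at the fibre over $[C]$. (One must keep track of the potentially confusing convention in the excerpt that $X_0(p)$ here denotes the moduli of curves-with-order-$p$-subgroup and $X_1(p)$ the moduli of curves-with-order-$p$-point; I would either fix this or flag it, since the standard convention is the opposite. I will simply follow the excerpt's convention.) For item (3), the morphism $X_1(p)\to X_0(p)$ of the excerpt sends a point of order $p$ to the subgroup it generates; by Proposition~\ref{prop:rezkstrickland}\eqref{prop:rezkstrickland:agreeswitheta}, $\eta$ does exactly this on the level of the functors of points, sending the universal order-$p$ point $x$ to the isogeny $Q \to Q/x$. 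Since items (1) and (2) identify the source and target of $\eta$ with the asserted completions compatibly with their moduli interpretations, and $\eta$ induces the map $x \mapsto (Q \to Q/x)$ on points, the map $\eta$ is the completion of $X_1(p)\to X_0(p)$ at the preimages of $[C]$ by the functoriality of completion.

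The main obstacle is making precise the passage from "the Quillen formal group over $\pi_0(E)$ is the universal deformation of the formal group of $C$" to "$\spf \pi_0(E)$ is the completion of the moduli stack $X$ at $[C]$" and, more importantly, checking that this identification is compatible with the level structures — i.e. that forming the completion of $X_1(p)$ or $X_0(p)$ at the fibre over $[C]$ agrees with the deformation-of-formal-group description on the $E$-theory side. This is where Serre--Tate theory enters: since $C$ is supersingular its $p$-divisible group is connected and equals its formal group, so the deformation theory of $C$ (with or without level structure) is entirely captured by the formal group, and the relevant moduli completions are representable by complete local rings that match $\pi_0 E^{B\Sigma_p}/T_{\Sigma_p}$ and $\pi_0 E^{BC_p}/T_{C_p}$ respectively. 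A secondary point to be careful about is that the fibre of $X_i(p) \to X$ over $[C]$ is a finite but generally non-reduced, non-connected scheme, and "completion at the preimage" should be read as completion along this closed subscheme; this matches the non-local finite free $\pi_0(E)$-algebras appearing in Proposition~\ref{prop:rezkstrickland}\eqref{prop:rezkstrickland:goodgroups}. I would spend most of the write-up on these compatibility checks and treat the rest as a formal consequence of Proposition~\ref{prop:rezkstrickland}.
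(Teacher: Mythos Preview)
Your approach is the same as the paper's: invoke that $C$ is supersingular so that (by Serre--Tate) its deformation theory, with or without level structure, is governed entirely by its formal group, and then read off the moduli interpretations from Proposition~\ref{prop:rezkstrickland}. The paper's proof says exactly this in two sentences, and your extra care about the fibre over $[C]$ being a possibly non-connected finite scheme is a reasonable elaboration.

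There is, however, a genuine labeling error in your write-up. The paper's definitions of $X_0(p)$ (curves with an order-$p$ subgroup) and $X_1(p)$ (curves with an order-$p$ point) are the \emph{standard} conventions, not swapped as you claim in your parenthetical. What you actually detected is that items~(1) and~(2) in the \emph{statement} of the corollary have $X_0(p)$ and $X_1(p)$ interchanged: since $\pi_0 E^{B\Sigma_p}/T_{\Sigma_p}$ classifies degree-$p$ isogenies and $\pi_0 E^{BC_p}/T_{C_p}$ classifies order-$p$ points, the former should be the completion of $X_0(p)$ and the latter of $X_1(p)$. The paper's own proof confirms this reading, matching subgroups to $X_0(p)$ and points to $X_1(p)$ before citing Proposition~\ref{prop:rezkstrickland}. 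Consequently your sentence ``a degree $p$ isogeny \dots\ is exactly a point of $X_1(p)$'' is false under the stated definitions; it should read $X_0(p)$, and likewise in item~(2). Once you correct the labels (or flag the swap in the statement rather than in the definitions), your argument is complete and matches the paper's.
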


\begin{proof}
Since $C$ is supersingular, the deformation theory of $C$ is controlled entirely by its formal group. The formal completions of $X$, $X_0(p)$, and $X_1(p)$ at the points lying over $[C]$ therefore classify, respectively, deformations of the formal group of $C$, finite subgroup schemes of order $p$, and points of order $p$ in these deformations.

By Proposition~\ref{prop:rezkstrickland}, the rings $\pi_0 E^{B\Sigma_p}/T_{\Sigma_p}$ and $\pi_0 E^{BC_p}/T_{C_p}$ represent precisely these deformation problems. Compatibility of the forgetful maps with completion then identifies $\eta$ with the completed morphism $X_1(p)\to X_0(p)$.
\end{proof}

\subsection{The trace map and integration}\label{suse:traceandint}

To compute the $\delta$ operation on $E$, we first study integration over $BC_p$ with coefficients in $E$. For this, we recall the algebraic notion of trace.

\begin{definition}
    Let $A$ be a commutative ring, and let $R$ be an $A$-algebra whose underlying $A$-module is finite projective. The trace map
    \[
    \tr_{R/A} \colon R \to A
    \]
    is the $A$-linear map defined as the composite
    \[
    R \xrightarrow{a} \en_A(R) \cong R \otimes_A R^\vee \to A,
    \]
    where $a$ is the action of $R$ on itself by left multiplication and $R^\vee = \hom_A(R,A)$.
\end{definition}

We will need the following identification to perform our computation.

\begin{proposition}\label{prop:trasspan}
    The map
    \[
    \tau \colon \pi_0 E^{BC_p} \to \pi_0(E)
    \]
    induced by the span
    \[
    BC_p \xleftarrow{\id} BC_p \to *
    \]
    is equal to $\frac{1}{p}\,\tr_{\pi_0 E^{BC_p}/\pi_0(E)}$.
\end{proposition}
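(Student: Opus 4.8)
The plan is to identify both sides as maps $\pi_0 E^{BC_p} \to \pi_0(E)$ arising from spans and compare them using the explicit description of $\pi_0 E^{BC_p}$ as a finite free $\pi_0(E)$-algebra (Proposition~\ref{prop:rezkstrickland}~\eqref{prop:rezkstrickland:goodgroups}). First I would recall that under the equivalence of Proposition~\ref{prop:canmonoid}, the span $BC_p \xleftarrow{\id} BC_p \to *$ corresponds to the composite $E^{BC_p} = \lim_{BC_p} E \xrightarrow{\nm_{BC_p}^{-1}} \colim_{BC_p} E \to E$, i.e. to the wrong-way "integration" map $\int_{BC_p}\colon E^{BC_p}\to E$ applied to $\pi_0$. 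The key structural input is that $\spectra_{K(n)}$, being $\infty$-semiadditive and in fact symmetric monoidal with $E$ a commutative algebra, allows one to decompose this integration using the covering $\explicitset{e}\to C_p$, i.e. the map of groupoids $* \to BC_p$ which is a $p$-fold covering. This exhibits $BC_p$ as a "$C_p$-quotient of a point," and the general machinery relating $\nm_{BC_p}$ to transfers lets us factor the span.

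The main computation is the following span-level identity. The map $*\to BC_p$ (the universal $C_p$-torsor) has an associated transfer $\mathrm{tr}\colon E^{B\explicitset{e}} = E \to E^{BC_p}$ induced by the span $BC_p \xleftarrow{p} * \xrightarrow{\id} *$ — wait, more precisely induced by $* \leftarrow * \rightarrow BC_p$ read backwards — and composing the defining span of $\tau$ with this transfer, or rather computing $\tau$ directly in the span category, one gets the composite $BC_p \xleftarrow{\id} BC_p \xrightarrow{\pi} *$ whose pullback against itself over $*$ is $BC_p\times_* BC_p \simeq BC_p \times C_p$ (using that the loop space of $BC_p$ is $C_p$), reflecting that $\tau$ composed with the covering-to-$BC_p$ map recovers $p$ times the point. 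Concretely, the composite
\[
E \xrightarrow{\ \mathrm{res}\ } E^{BC_p} \xrightarrow{\ \tau\ } E
\]
(where $\mathrm{res}$ is restriction along $\explicitset{e}\to C_p$ is the wrong-way map, i.e.\ the unit-induced $E\to E^{BC_p}$) equals multiplication by $|C_p| = p$, since it is $\cardinality{C_p}_E = \int_{C_p}\id$ for the finite set $C_p$, which is $p$. Dually, the trace map $\tr_{\pi_0 E^{BC_p}/\pi_0(E)}$ precomposed with the unit $\pi_0(E)\to \pi_0 E^{BC_p}$ is also multiplication by the rank, which is $p$ (since $\pi_0 E^{BC_p}\cong \pi_0(E)[[x]]/[p](x)$ has rank $p^{n}$... here I must be careful: the rank of $\pi_0 E^{BC_p}$ over $\pi_0 E$ is $p^n$, not $p$, so actually the normalization is $\frac{1}{p}\tr$, consistent with the claim, and I would track the factor through the HKR-type count of order-$p$ points — there are $p^n-1$ nonzero ones plus the identity, wait, $[p](x)$ has degree $p^n$).

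So the real strategy is: (1) express $\tau = \int_{BC_p}$ (on $\pi_0$) as a genuine $\pi_0(E)$-linear map, which follows because everything in sight is a map of $E$-modules and $\pi_0$ is lax monoidal; (2) observe that both $\tau$ and $\frac{1}{p}\tr$ are $\pi_0(E)$-linear maps out of the finite free module $\pi_0 E^{BC_p}$, so it suffices to check they agree after a faithfully flat base change, e.g.\ after inverting $p$ and passing to a splitting field, where $\spf \pi_0 E^{BC_p}/T_{C_p}$ becomes étale and $\pi_0 E^{BC_p}$ becomes a product of copies of $\pi_0(E)[1/p]$-algebras indexed by the $p$-torsion points; (3) in that split situation, the integration map over $BC_p$ is computed by the standard ambidexterity formula as a weighted sum over the $p$ torsion points (or over components), with the weight on each being $1/p$ coming from $|BC_p| = 1/p$, while the trace is the honest sum over the same index set — this is where the factor $\frac1p$ and the identification become transparent. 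The hard part will be step (3): making precise the "split" description of $\pi_0 E^{BC_p}$ after base change and matching the ambidexterity/character-theoretic formula for $\int_{BC_p}$ with the coordinate description of the trace, i.e.\ checking that the idempotent decomposition used to compute the trace is exactly the one through which the norm map factors. I expect this to reduce, via HKR character theory (previewed in the next section) or via Strickland's explicit presentation, to the elementary statement that for the finite set $C_p$ one has $|C_p|_{\mathds 1} = p$ and $|BC_p|_{\mathds 1} = |C_p|^{-1} = 1/p$ in the relevant ring, combined with the fact that a span $X \xleftarrow{\id} X \xrightarrow{f} Y$ always induces the "$f$-pushforward," whose $\pi_0$-level formula over a finite free extension is $\frac{1}{|\text{fiber}|}$ times the algebraic trace.
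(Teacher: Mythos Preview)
Your overall strategy --- pass to a faithfully flat extension over which $\pi_0 E^{BC_p}$ splits as a product indexed by the $p$-torsion of the formal group, then compare the two $\pi_0(E)$-linear maps on the resulting idempotents --- is exactly what the paper does. However, your execution of step~(3) contains a genuine error and a genuine gap.

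The error is in the cardinalities. The composite $E \to E^{BC_p} \xrightarrow{\tau} E$ is $\cardinality{BC_p}_E$, not $\cardinality{C_p}_E$; and in $\spectra_{K(n)}$ one has $\cardinality{BC_p} = p^{n-1}$ by \cite[Lemma~5.3.4]{CSY1}, not $1/p$ (the latter is the height-$0$ homotopy cardinality). That this happens to equal $p$ at height~$2$ is a coincidence your reasoning does not see. Since the rank of $\pi_0 E^{BC_p}$ over $\pi_0 E$ is $p^n$, the correct ratio $p^{n-1}/p^n = 1/p$ still emerges, but your argument as written does not produce it.

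The gap: even granting $\tau(1) = p^{n-1}$, you still need to know that $\tau$ assigns the \emph{same} weight to every idempotent $e_g$ in the split algebra, so that each weight is $p^{n-1}/p^n = 1/p$. You assert this (``weighted sum with weight $1/p$ on each'') without justification, and neither an appeal to a ``standard ambidexterity formula'' nor to HKR supplies it. This equality of weights is the actual content of the proof. The paper establishes it using the Hopf algebra structure: a shear $2$-isomorphism between two spans $BC_p \leftarrow BC_p\times BC_p \to BC_p$ forces the identity $(\tau\otimes\id)\circ m = \tau \otimes u$ on the split Hopf algebra, and comparing coefficients in the basis $\{e_g\}$ shows all $c_g := \tau(e_g)$ coincide. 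Only then does the normalization $\sum_g c_g = \tau(1) = p^{n-1}$ pin down $c_g = 1/p$. Some argument genuinely using the group structure on $BC_p$, not merely its cardinality, is required here.
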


\begin{proof}
We recall that $\pi_0 E^{BC_p}$ is a Hopf algebra classifying the $p$-torsion subgroup of the Quillen formal group, and that it is torsion-free (see Proposition~\ref{prop:rezkstrickland}). By \cite[Theorem~3.38]{MILNE}, there exists a faithfully flat $\pi_0(E)$-algebra $C$ such that
\[
\pi_0 E^{BC_p} \otimes_{\pi_0(E)} C
\]
is isomorphic to the Hopf algebra classifying a constant group
\[
G \cong (\integers/p)^n.
\]

This Hopf algebra is the algebra of functions $G \to C$. For each $g \in G$, let $e_g$ denote the indicator function of $g$. The elements $\{e_g\}_{g\in G}$ form a basis of $\pi_0 E^{BC_p} \otimes_{\pi_0(E)} C$. The unit is therefore $\sum_{g\in G} e_g$, and multiplication is pointwise. The coproduct is given by
\[
e_g \longmapsto \sum_{h_1+h_2=g} e_{h_1}\otimes e_{h_2},
\]
and the counit is
\[
\sum_{g\in G} a_g e_g \longmapsto a_0.
\]

The coalgebra structure on $\pi_0 E^{BC_p}$ arises from the group structure on $BC_p$, while the algebra structure comes from the cohomology ring structure. Thus, the span
\[
* \leftarrow BC_p \rightarrow BC_p
\]
corresponds to the unit of the algebra. Likewise, the comultiplication
\[
m \colon \pi_0 E^{BC_p} \to \pi_0 E^{BC_p} \otimes \pi_0 E^{BC_p}
\]
is induced by the span
\[
BC_p \xleftarrow{+} BC_p \times BC_p \to BC_p \times BC_p,
\]
where the left map is the group law on $BC_p$.

Write
\[
\tau(e_g) = c_g \in C.
\]
Since both $\tau$ and $\frac{1}{p}\tr$ are $\pi_0(E)$-linear, it suffices to show that $c_g = 1/p$ for all $g\in G$.

Consider the following $2$-isomorphism of morphisms in $\spaan(\spaces_{1-\fin})$:
\[
\begin{tikzcd}
	& {BC_p\times BC_p} \\
	{BC_p} && {BC_p} \\
	& {BC_p\times BC_p}
	\arrow["{+}"', from=1-2, to=2-1]
	\arrow["{\pi_2}", from=1-2, to=2-3]
	\arrow["{\pi_1}", from=3-2, to=2-1]
	\arrow["{\pi_2}"', from=3-2, to=2-3]
	\arrow["s"{description}, from=1-2, to=3-2]
\end{tikzcd}
\]
where $s$ is the shear map, defined by addition in the first coordinate and projection to the second. Since $BC_p$ is a group object, $s$ is an isomorphism.

By symmetric monoidal functoriality of $E^{(-)}$, the maps induced by these two spans are homotopic, and products are sent to tensor products (this is the Künneth property in $\tmnt{2}$). The upper span induces the map $(\tau\otimes \id)\circ m$, while the lower span induces $\tau\otimes u$, where $u$ is the unit of the algebra.

For any $h\in G$, we therefore obtain
\begin{align*}
(\tau\otimes \id)\circ m(e_h)
&= (\tau\otimes \id)\parenth{\sum_{g_1+g_2=h} e_{g_1}\otimes e_{g_2}}
 = \sum_{g_1+g_2=h} c_{g_1} e_{g_2}, \\
(\tau\otimes u)(e_h)
&= \sum_{g\in G} c_h\, e_g.
\end{align*}
Comparing coefficients of the basis $\{e_g\}$ shows that all $c_g$ are equal.

It remains to compute $c_0$. Since
\[
(\tau\otimes C)(1) = \sum_{g\in G} c_0 = \cardinality{G}\, c_0,
\]
it suffices to compute $\tau(1)$. By definition, $\tau(1)$ is the cardinality
\[
\cardinality{BC_p} \colon
E \xrightarrow{1} E^{BC_p} \xrightarrow{\sim} E\otimes BC_p \to E,
\]
which equals $p^{n-1}$ by \cite[Lemma~5.3.4]{CSY1}. Since $\cardinality{G}=p^n$, we conclude that $c_0 = 1/p$, and hence $c_g=1/p$ for all $g\in G$.
\end{proof}

\begin{proposition}\label{prop:alphaistraceoftotalpo}
    Let $\tilde{\psi^{C_p}}:\pi_0E\rightarrow \pi_0 E^{BC_p}$ be the total power operation of $C_p$, then $\alpha=\frac{1}{p}\tr\circ\; \tilde{\psi^{C_p}}$.
\end{proposition}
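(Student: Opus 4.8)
The plan is to unwind the definitions of $\alpha$ and $\widetilde{\psi^{C_p}}$ until they share a common ``first half,'' and then recognize the discrepancy between their ``second halves'' as precisely the map $\tau$ of Proposition~\ref{prop:trasspan}; the identity $\tau=\tfrac1p\tr$ established there then closes the argument. Since $L=R=E$ is the unit of $\tmnt{n}$, the object $E^{\otimes p}$ with its cyclic $C_p$-action is equivalent to the trivial $C_p$-object on $E$, the multiplication $\mu\colon E^{\otimes p}\to E$ being a $C_p$-equivariant equivalence (equivariant because $E$ is commutative, so $\mu$ is even $\Sigma_p$-invariant). By \cite[Lemma~4.2.10]{CSY1} the comultiplication map $E\to(E^{\otimes p})^{hC_p}$, composed with the equivalence $\mu^{hC_p}\colon(E^{\otimes p})^{hC_p}\xrightarrow{\ \sim\ }E^{hC_p}=E^{BC_p}$, is the unit $u\colon E\to E^{BC_p}$ of the $(\pi^*,\pi_*)$-adjunction along $\pi\colon BC_p\to*$. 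Let $z\colon E\to(E^{\otimes p})^{hC_p}$ denote the composite of the comultiplication with $(x^{\otimes p})^{hC_p}$; then reading off the two definitions,
\[
\widetilde{\psi^{C_p}}(x)\;=\;\mu^{hC_p}\circ z,
\qquad
\alpha(x)\;=\;\varepsilon\circ\mu_{hC_p}\circ\nm_{BC_p}^{-1}\circ z,
\]
where $\varepsilon\colon E_{hC_p}\to E$ is the counit of $\pi_!\dashv\pi^*$, and where I have used that the ``product'' morphism in the definition of $\alpha$ is $\varepsilon\circ\mu_{hC_p}$ for $\mu$ taken equivariantly, and that the last morphism defining $\widetilde{\psi^{C_p}}$ is $\mu^{hC_p}$.

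The next step is to invoke naturality of the norm $\nm_{BC_p}\colon(-)_{hC_p}\Rightarrow(-)^{hC_p}$ (Definition~\ref{def:normmap}) applied to the equivariant map $\mu$, which gives $\mu_{hC_p}\circ\nm_{BC_p}^{-1}=\nm_{BC_p}^{-1}\circ\mu^{hC_p}$ and hence
\[
\alpha(x)\;=\;\varepsilon\circ\nm_{BC_p}^{-1}\circ\mu^{hC_p}\circ z\;=\;\parenth{\varepsilon\circ\nm_{BC_p}^{-1}}\circ\widetilde{\psi^{C_p}}(x).
\]
It remains to identify $\varepsilon\circ\nm_{BC_p}^{-1}\colon E^{BC_p}\to E$ with $\tau$. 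By definition $\tau$ is the value on the span $BC_p\xleftarrow{\ \id\ }BC_p\to*$ of the unique $\infty$-commutative monoid structure on $E$; unwinding that structure, the backward leg $\id$ contributes the identity of $E^{BC_p}$ while the forward leg $BC_p\to*$ contributes the composite $E^{BC_p}=\lim_{BC_p}E\xrightarrow{\nm_{BC_p}^{-1}}\colim_{BC_p}E=E_{hC_p}\xrightarrow{\varepsilon}E$, which is exactly $\varepsilon\circ\nm_{BC_p}^{-1}$. Feeding in $\tau=\tfrac1p\tr_{\pi_0E^{BC_p}/\pi_0(E)}$ from Proposition~\ref{prop:trasspan} yields $\alpha=\tfrac1p\tr\circ\widetilde{\psi^{C_p}}$. (One can organize the same computation by starting from the corollary $\alpha(f)=\int_{BC_p}f^{\otimes p}$ for unit $L,R$ and checking, via Definition~\ref{def:integration} and the adjunction identification $\pi_0\map_{\fun(BC_p,\tmnt{n})}(\pi^*E,\pi^*E)\simeq\pi_0E^{BC_p}$, that $f^{\otimes p}$ corresponds to $\widetilde{\psi^{C_p}}(f)$ and $\int_{BC_p}$ to $\tau$.)

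The step I expect to demand the most care is making that single naturality square do all the work: one must verify that $\alpha$ and $\widetilde{\psi^{C_p}}$ really are built from literally the same morphism $z$, and that their second halves are $\varepsilon\circ\mu_{hC_p}\circ\nm_{BC_p}^{-1}$ and $\mu^{hC_p}$ respectively, so that they differ only in the order of ``norm'' and ``multiply.'' This is bookkeeping with the Kan-extension adjunctions defining the structure maps rather than a genuine difficulty; all the substantive content — notably $\tau(1)=\cardinality{BC_p}=p^{n-1}$ and $\tau=\tfrac1p\tr$ — is already contained in Proposition~\ref{prop:trasspan}.
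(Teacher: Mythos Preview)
Your proof is correct and takes essentially the same approach as the paper: invoke Proposition~\ref{prop:trasspan} to identify the map $\pi_0E^{BC_p}\to\pi_0E$ with $\tfrac1p\tr$, and observe that $\alpha$ is the composite of this map with $\widetilde{\psi^{C_p}}$. The paper's own proof is two sentences asserting exactly this; your norm-naturality square and the identification $\tau=\varepsilon\circ\nm_{BC_p}^{-1}$ simply spell out the bookkeeping the paper leaves implicit in the phrase ``$\alpha$ was defined as the composition of this map with the total power operation.''
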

\begin{proof}
    By \ref{prop:trasspan} we have that the map $\pi_0R^{BC_p}\rightarrow R_0$ is the trace. $\alpha$ was defined as the composition of this map with the total power operation.
\end{proof}

\section{The character of the total power operation}\label{sec:charoftoalpower}

We now have most of the ingredients needed to state a formula for $\alpha$ on $\pi_0E$ at height $2$ and prime $p=3$. In Proposition~\ref{prop:alphaistraceoftotalpo} we saw that
\[
\alpha(x) = \int_{BC_p} \widetilde{\psi^{C_p}}(x).
\]
The operation $\psi^{C_3}$ is already understood. The remaining task is to analyze the part of $\widetilde{\psi^{C_3}}$ that is killed upon passage to the quotient by the transfer ideal.

In this section, we prove the following result.

\begin{proposition}\label{thm:charactercomputation}
There is a natural isomorphism
\[
\pi_0E^{B\Sigma_p} \otimes \rationals
\cong
\pi_0E \otimes \rationals
\times
\parenth{\pi_0E^{B\Sigma_p}/T_{\Sigma_p} \otimes \rationals}.
\]
Under this identification, the rationalization of the total power operation acts by
\[
x \longmapsto x^p
\]
on the first factor, and by the rationalization of $\psi^p$ on the second factor.
\end{proposition}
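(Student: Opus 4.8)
The plan is to produce the splitting by exhibiting a pair of complementary idempotents in the ring $\pi_0E^{B\Sigma_p}\otimes\rationals$, one projecting onto the "diagonal'' copy of $\pi_0 E\otimes\rationals$ and the other onto $\pi_0E^{B\Sigma_p}/T_{\Sigma_p}\otimes\rationals$. The geometric source of this decomposition is the well-known identification (for $p$ prime, rationally) of the moduli space $X_0(p)$ of degree-$p$ isogenies with a union of two components: the "trivial'' isogeny component, on which the source and target elliptic curves coincide and the isogeny is multiplication by... — concretely, rationally $\pi_0E^{B\Sigma_p}$ has the restriction map to $\pi_0 E$ (forgetting to the identity-coset term, corresponding to the subgroup $\Sigma_1\times\Sigma_{p-1}$ degeneration) and the quotient map to $\pi_0E^{B\Sigma_p}/T_{\Sigma_p}$, and after inverting $p$ these two together give an isomorphism onto the product. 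First I would invoke Proposition~\ref{prop:rezkstrickland}(\ref{prop:rezkstrickland:goodgroups}), which tells us $\pi_0E^{B\Sigma_p}$ and $\pi_0E^{B\Sigma_p}/T_{\Sigma_p}$ are finite free over $\pi_0E$, so everything in sight is a finite free $\pi_0E$-module and "$\otimes\rationals$'' just means inverting $p$ (since $\pi_0E$ is already a $\integers_p$-algebra).

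Next I would set up the two maps explicitly. One is the quotient map $q\colon\pi_0E^{B\Sigma_p}\to \pi_0E^{B\Sigma_p}/T_{\Sigma_p}$. The other is the ring map $r\colon\pi_0E^{B\Sigma_p}\to\pi_0 E$ induced by the restriction along $B(\Sigma_1\times\Sigma_{p-1})\to B\Sigma_p$ composed with the augmentation $\pi_0E^{B\Sigma_{p-1}}\to \pi_0E$ — equivalently, the ring map classifying the "identity'' degree-$p$ isogeny with kernel a chosen point, which rationally is a genuine component. I would then show that the transfer element $\operatorname{tr}_{\Sigma_1\times\Sigma_{p-1}}(1)\in\pi_0E^{B\Sigma_p}$, which by the double-coset / Frobenius reciprocity bookkeeping already used in the proof of the $\eta$-compatibility proposition equals (on the nose, rationally) a unit multiple of an idempotent after inverting $p$: concretely $\operatorname{tr}(1)$ acts as multiplication by the degree of the covering, which is $p$, on the trivial component and by $0$ on the rest, so $e:=\frac1p\operatorname{tr}_{\Sigma_1\times\Sigma_{p-1}}(1)$ is idempotent in $\pi_0E^{B\Sigma_p}\otimes\rationals$ with $eR\cong\pi_0E\otimes\rationals$ via $r$ and $(1-e)R$ the quotient by the transfer ideal (note $T_{\Sigma_p}$ is rationally generated by this single transfer after accounting that all the $\Sigma_r\times\Sigma_t$ transfers with $r,t<p$ land in the ideal generated by the $\Sigma_1\times\Sigma_{p-1}$ one, since rationally the double cosets collapse as in the earlier computation). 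This gives the ring isomorphism of the statement.

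For the action of the total power operation: by definition $\widetilde{\psi^p}$ is a ring map (before quotienting it is multiplicative and the additivity defect lies in $T_{\Sigma_p}$, by \cite{HINFTY}), so its composite with $q$ is exactly $\psi^p$, giving the second factor for free. For the first factor I must compute $r\circ\widetilde{\psi^p}$. Unwinding the definitions: $\widetilde{\psi^p}(x)$ is the class of $x^{\otimes p}\colon E^{\otimes p}\to R^{\otimes p}$ in $(R^{\otimes p})^{h\Sigma_p}\to R^{B\Sigma_p}$, and restricting along $B(\Sigma_1\times\Sigma_{p-1})$ then augmenting picks out — via the compatibility of power operations with the "restriction to a point'' maps, i.e. the fact that the total power operation followed by restriction to the trivial subgroup recovers the $p$-th power operation $x\mapsto x^p$ (this is the standard "$\psi$ lifts Frobenius / reduces to $p$-th power on the augmentation'' statement, essentially \cite{HINFTY} again, or directly from $E^{\otimes p}\to E$ being the multiplication) — the element $x^p$. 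Hence $r(\widetilde{\psi^p}(x))=x^p$. The main obstacle I anticipate is being careful about exactly which transfer / restriction maps assemble into the idempotent $e$ and verifying that $T_{\Sigma_p}\otimes\rationals$ is precisely the ideal $(1-e)$ rather than something smaller — this requires re-running the double-coset cardinality computation from the $\eta$-proposition at the level of all the $\Sigma_r\times\Sigma_t$ and checking, rationally, that their transfer images generate the same ideal as the single $\Sigma_1\times\Sigma_{p-1}$ transfer; the rest is formal bookkeeping with the span category and the Künneth isomorphism in $\tmnt{2}$.
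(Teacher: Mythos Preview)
Your approach is genuinely different from the paper's. The paper does not build idempotents by hand; it base-changes to the splitting algebra $C_0$ and invokes HKR character theory (Theorem~\ref{thm:HKR}) to obtain a product decomposition of $\pi_0 E^{B\Sigma_p}\otimes_{\pi_0E}C_0$ indexed by conjugacy classes of maps $\integers_p^n\to\Sigma_p$, then cites the Barthel--Stapleton identification of the transfer ideal with the factor indexed by the trivial homomorphism (Proposition~\ref{prop:transideal}). Since that factor is fixed by the $GL_n(\integers_p)$-action, the splitting descends to $\pi_0E\otimes\rationals$. The action of $\widetilde{\psi^p}$ on each factor is then read off from the explicit Barthel--Stapleton formula for $\Psi$ (Proposition~\ref{prop:psidiagram} and Definition~\ref{def:formulapsi}): on the trivial factor it is $x\mapsto\prod_{i=1}^p(\phi_{\{e\}})_*x=x^p$. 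Your route via the restriction map $r$ and the quotient $q$ is more elementary, avoids HKR and Barthel--Stapleton entirely, and your identification $r\circ\widetilde{\psi^p}(x)=x^p$ is the correct standard fact.

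The genuine gap is that your idempotence argument is circular as stated. You justify that $e=\tfrac1p\operatorname{tr}_{\Sigma_{p-1}}^{\Sigma_p}(1)$ is idempotent by saying it ``acts as multiplication by $p$ on the trivial component and by $0$ on the rest,'' but the existence of these two components is precisely the splitting you are trying to establish. If you try instead to verify $e^2=e$ directly via the Mackey double coset formula you will find $\operatorname{tr}_{\Sigma_{p-1}}^{\Sigma_p}(1)^2=\operatorname{tr}_{\Sigma_{p-1}}^{\Sigma_p}(1)+\operatorname{tr}_{\Sigma_{p-2}}^{\Sigma_p}(1)$, and relating the second term back to the first is not formal. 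A cleaner fix that stays within your framework is a rank count: by Strickland's theorem $\pi_0E^{B\Sigma_p}/T_{\Sigma_p}$ is free of rank $\tfrac{p^n-1}{p-1}$ (the number of order-$p$ subgroups of $Q[p]$), while $\pi_0E^{B\Sigma_p}$ is free of rank one more; hence $T_{\Sigma_p}$ is free of rank $1$, the restriction $r|_{T_{\Sigma_p}}$ hits $p\in\pi_0E$ and is therefore a rational isomorphism, and the short exact sequence $0\to T_{\Sigma_p}\to\pi_0E^{B\Sigma_p}\to\pi_0E^{B\Sigma_p}/T_{\Sigma_p}\to 0$ together with $r|_{T_{\Sigma_p}}$ assembles $(r,q)$ into a block-triangular map with rationally invertible diagonal blocks. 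This also dissolves your anticipated obstacle about the various $\Sigma_r\times\Sigma_t$ transfers, since you never need to compare them.
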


To establish this decomposition, we use HKR character theory. HKR character theory provides a description of the rationalized Morava $E$-theory of finite groups in terms of group-theoretic data. We apply it here to compute the rationalization of the total power operation, which in turn allows us to extract the trace of $\widetilde{\psi^{C_p}}$.

\subsection{HKR character theory}

Let $G$ be a finite group.

\begin{definition}\label{def:nclasses}
    Let $G_n$ denote the set of $n$-tuples of pairwise commuting $p$-torsion elements of $G$. Equivalently,
    \[
    G_n = \hom(\integers_p^n, G).
    \]
    This set carries a natural action of $G$ by conjugation.
\end{definition}

\begin{definition}
    The height $n$ generalized class functions of $G$ with coefficients in a ring $C$ are the $C$-algebra of functions on the quotient
    \[
    \hom(\integers_p^n, G)/G,
    \]
    where $G$ acts by conjugation. We denote this algebra by $\cl_n(G,C)$. If $R$ is equipped with the trivial $G$-action, then
    \[
    \hom_{G\text{-}\catname{Set}}(G_n,R) = \cl_n(G,R).
    \]
\end{definition}

\begin{example}
    Since $C_p$ is an abelian $p$-group, conjugation is trivial. Therefore $\cl_2(C_p)$ is a free module of rank
    \[
    \cardinality{\hom(\integers_p^2,C_p)} = \cardinality{C_p^2} = p^2.
    \]
    Indeed, for any ordered pair of elements of $C_p$ there is a unique group homomorphism
    \[
    \integers_p^2 \to C_p
    \]
    sending $(1,0)$ and $(0,1)$ to the given elements.

    Note that this agrees with the rank of $\pi_0E^{BC_p}$ in height $n=2$.
\end{example}

\begin{example}
    The $C$-module $\cl_2(\Sigma_p)$ has rank $p+2$. Indeed, to specify a homomorphism
    \[
    \integers_p^2 \to \Sigma_p
    \]
    we first choose the image of $(1,0)$. This element may be trivial or nontrivial.

    If $(1,0)$ is trivial, then $(0,1)$ is either trivial (one possibility) or a $p$-cycle (one additional possibility, unique up to conjugation).

    If $(1,0)$ is nontrivial, then by conjugation we may assume it is the cycle $(1\,2\,\dots\,p)$. In this case $(0,1)$ must be sent to a power of this cycle, giving $p$ distinct possibilities.

    Altogether, this yields $1+1+p = p+2$ conjugacy classes, and hence $\cl_2(\Sigma_p)$ has rank $p+2$.

    Note that this also agrees with the rank of $\pi_0 E^{B\Sigma_p}$ in height $n=2$.
\end{example}

HKR character theory identifies a suitable base change of $\pi_0 E^{BG}$ with $\cl_n(G,C)$, providing a powerful computational tool. We now formulate this theory precisely.

Over an algebraically closed field of characteristic $0$, all formal groups are étale. Thus there exists a faithfully flat $\pi_0(E)$-algebra over which the Quillen formal group $Q$ becomes isomorphic to $(\rationals_p/\integers_p)^n$. There is a universal algebra carrying such an isomorphism in the following sense.

\begin{definition}\cite[Corollary~6.8(i)]{HKR}
    There exists a rational $\pi_0(E)$-algebra $C_0$, called the splitting algebra, such that for any rational $\pi_0(E)$-algebra $R$ there is a natural identification
    \[
    \hom_{\pi_0(E)\text{-}\catname{Alg}}(C_0,R)
    =
    \iso\parenth{(\rationals_p/\integers_p)^n,\; Q\times \spf R}.
    \]
\end{definition}

\begin{proposition}\label{prop:actionongln}
    The algebra $C_0$ carries a natural left action of $Gl_n(\integers_p)$.
\end{proposition}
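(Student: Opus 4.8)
The plan is to exhibit the action functorially, using the universal property of $C_0$ rather than any explicit presentation. Recall that $C_0$ corepresents the functor sending a rational $\pi_0(E)$-algebra $R$ to the set $\iso\parenth{(\mathbb{Q}_p/\mathbb{Z}_p)^n,\; Q\times\spf R}$ of isomorphisms of the constant formal group $(\mathbb{Q}_p/\mathbb{Z}_p)^n$ with the base-changed Quillen formal group. The key observation is that $GL_n(\mathbb{Z}_p)$ acts on $(\mathbb{Q}_p/\mathbb{Z}_p)^n$ by automorphisms of $p$-divisible groups: an element $g\in GL_n(\mathbb{Z}_p)$ gives a $\mathbb{Z}_p$-linear automorphism of $\mathbb{Z}_p^n$, hence — by taking $\mathbb{Q}_p/\mathbb{Z}_p$-points, or equivalently by the identification $(\mathbb{Q}_p/\mathbb{Z}_p)^n=\colim_k (p^{-k}\mathbb{Z}_p/\mathbb{Z}_p)^n$ — an automorphism of the ind-finite group scheme $(\mathbb{Q}_p/\mathbb{Z}_p)^n$ over any base. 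First I would make precise that $GL_n(\mathbb{Z}_p)$, as a profinite group, acts continuously on $(\mathbb{Q}_p/\mathbb{Z}_p)^n$, which is legitimate since each finite layer $(p^{-k}\mathbb{Z}_p/\mathbb{Z}_p)^n$ has automorphism group $GL_n(\mathbb{Z}/p^k)$ and the action factors through this finite quotient.

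Next I would define the action on the functor of points: given $g\in GL_n(\mathbb{Z}_p)$ and an isomorphism $\phi\colon (\mathbb{Q}_p/\mathbb{Z}_p)^n\xrightarrow{\sim} Q\times\spf R$, set $g\cdot\phi = \phi\circ g^{-1}$, which is again an isomorphism of formal groups over $R$. One checks immediately that this is a left action and that it is natural in $R$, since base change commutes with precomposition. By the Yoneda lemma, a natural automorphism of the functor corepresented by $C_0$ is the same as an automorphism of the $\pi_0(E)$-algebra $C_0$; since the assignment $g\mapsto(\phi\mapsto\phi\circ g^{-1})$ is a group homomorphism $GL_n(\mathbb{Z}_p)\to\aut(\text{functor})$, we obtain the desired group homomorphism $GL_n(\mathbb{Z}_p)\to\aut_{\pi_0(E)\text{-}\catname{Alg}}(C_0)$, i.e. a left action of $GL_n(\mathbb{Z}_p)$ on $C_0$ by $\pi_0(E)$-algebra automorphisms. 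Dually, on $\spf C_0$ this is a right action, matching the convention that $GL_n(\mathbb{Z}_p)$ acts on the "basis-choosing" scheme.

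The main point requiring care — and the only real obstacle — is the continuity/algebraicity of the action: one must verify that each $g$ genuinely induces an algebra map and not merely a set map, and that the construction is compatible with the colimit presentation $C_0=\colim C_{0,k}$ over the finite torsion levels (if one uses such a presentation, as in \cite{HKR}). This is handled by noting that the action of $GL_n(\mathbb{Z}_p)$ on $\iso\parenth{(\mathbb{Q}_p/\mathbb{Z}_p)^n, Q\times\spf R}$ restricts compatibly to the levelwise isomorphism sets $\iso\parenth{(p^{-k}\mathbb{Z}_p/\mathbb{Z}_p)^n, Q[p^k]\times\spf R}$, on each of which $GL_n(\mathbb{Z}_p)$ acts through its finite quotient $GL_n(\mathbb{Z}/p^k)$ by honest automorphisms of finite group schemes; passing to the colimit gives the action on $C_0$. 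Everything else is a formal consequence of Yoneda. I would keep the write-up short, citing \cite[Corollary~6.8]{HKR} for the existence and universal property of $C_0$ and simply recording that the evident precomposition action is natural.
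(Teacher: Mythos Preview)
Your approach is essentially the same as the paper's: both use the Yoneda lemma to reduce the action on $C_0$ to the evident precomposition action of $GL_n(\integers_p)$ on the functor $R\mapsto\iso\bigl((\rationals_p/\integers_p)^n,\,Q\times\spf R\bigr)$, coming from the canonical action on $(\rationals_p/\integers_p)^n\cong\hom(\integers_p^n,\rationals_p/\integers_p)$. Your discussion of continuity via the finite-level filtration is extra care the paper omits, and your left/right bookkeeping wobbles slightly (you define a left action on the functor and then call the induced action on $\spf C_0$ a right action, whereas these should coincide), but this is cosmetic and the argument is correct.
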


\begin{proof}
    By the Yoneda lemma, giving a left action of $Gl_n(\integers_p)$ on $C_0$ is equivalent to giving a right action on the functor
    \[
    R \longmapsto \iso\parenth{(\rationals_p/\integers_p)^n,\; Q\times \spf R}.
    \]
    By the Yoneda lemma again, this is equivalent to giving a left action of $Gl_n(\integers_p)$ on $(\rationals_p/\integers_p)^n$. Such an action is canonical, since
    \[
    (\rationals_p/\integers_p)^n \cong \hom(\integers_p^n,\rationals_p/\integers_p),
    \]
    and $Gl_n(\integers_p)$ acts on $\integers_p^n$ by change of basis.
\end{proof}

\begin{proposition}\cite[Corollary~6.8(iii)]{HKR}
    The subring of $Gl_n(\integers_p)$-fixed points of $C_0$ is equal to $\pi_0(E)\otimes \rationals$.
\end{proposition}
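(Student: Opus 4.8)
The plan is to reduce the statement to finite level, where $Gl_n(\integers/p^k)$ is finite and $\spec C_k$ is a torsor under it, and then invoke faithfully flat descent for torsors. Recall first the construction of the splitting algebra as a filtered colimit $C_0 = \colim_k C_k$, where $\spec C_k$ is the scheme of isomorphisms of finite group schemes over $\pi_0(E)\otimes\rationals$ from the constant group $\Lambda_k := (p^{-k}\integers_p/\integers_p)^n$ to $Q[p^k]$, the $p^k$-torsion subgroup scheme of the Quillen formal group (cut out by the $p^k$-series; cf.\ Proposition~\ref{prop:rezkstrickland}). Using $(\rationals_p/\integers_p)^n = \colim_k \Lambda_k$ and $Q[p^\infty] = \colim_k Q[p^k]$, an isomorphism $(\rationals_p/\integers_p)^n \xrightarrow{\sim} Q$ is the same datum as a compatible family of level-$k$ isomorphisms; this gives the presentation and identifies each transition map $C_k \to C_{k+1}$ with the injective ring map dual to restriction along $\Lambda_k \hookrightarrow \Lambda_{k+1}$. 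The $Gl_n(\integers_p)$-action of Proposition~\ref{prop:actionongln} preserves every $C_k$, and on $C_k$ it factors through the finite quotient $Gl_n(\integers_p)\twoheadrightarrow Gl_n(\integers/p^k)=\aut(\Lambda_k)$. Since the transition maps are injective, any element of $C_0$ fixed by $Gl_n(\integers_p)$ already lies in some $C_k$ and is fixed there, so
\[
C_0^{Gl_n(\integers_p)} \;=\; \colim_k\, C_k^{Gl_n(\integers/p^k)},
\]
and it suffices to show $C_k^{Gl_n(\integers/p^k)} = \pi_0(E)\otimes\rationals$ for every $k$.

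Next I would check that $\spec C_k \to \spec\!\big(\pi_0(E)\otimes\rationals\big)$ is a torsor under the finite group $Gl_n(\integers/p^k)$. Because $p$ is invertible in $\pi_0(E)\otimes\rationals$ and $Q$ has finite height, $Q[p^k]$ becomes finite \'etale of rank $p^{nk}$ after rationalising (the $p$-divisible group $Q[p^\infty]$ becomes \'etale), and all of its geometric fibres are isomorphic to $(\integers/p^k)^n$. Hence $\spec C_k$, being the scheme of isomorphisms $\Lambda_k \xrightarrow{\sim} Q[p^k]$, is finite \'etale and surjective over $\spec\!\big(\pi_0(E)\otimes\rationals\big)$ — indeed $Q[p^k]$ trivialises over $\spec C_k$ via the tautological isomorphism — while $\aut(\Lambda_k)=Gl_n(\integers/p^k)$ acts simply transitively on its geometric fibres; so $\spec C_k$ is a $Gl_n(\integers/p^k)$-torsor, and in particular $\pi_0(E)\otimes\rationals \to C_k$ is faithfully flat.

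Finally, for any finite group $G$ and any $G$-torsor $\spec B \to \spec A$ one has $B^G = A$: after the trivialising \'etale base change $A \to A'$ one has $B\otimes_A A' \cong \prod_{g\in G} A'$ with $G$ permuting the factors, so $(B\otimes_A A')^G = A'$, and faithfully flat descent upgrades this to $B^G = A$ — equivalently, combine the equalizer presentation $A = \mathrm{eq}\big(B \rightrightarrows B\otimes_A B\big)$ with the shear isomorphism $B\otimes_A B \cong \prod_{g\in G} B$. Applied with $A = \pi_0(E)\otimes\rationals$, $B = C_k$ and $G = Gl_n(\integers/p^k)$, this gives $C_k^{Gl_n(\integers/p^k)} = \pi_0(E)\otimes\rationals$ for all $k$, and passing to the colimit over $k$ yields $C_0^{Gl_n(\integers_p)} = \pi_0(E)\otimes\rationals$.

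The step I expect to be the main obstacle is the first one: one must unpack the explicit construction of the splitting algebra to present $C_0$ as a filtered colimit of finite-level torsors with compatible $Gl_n(\integers/p^k)$-actions, so that the profinite group $Gl_n(\integers_p)$ only ever acts through its finite quotients and is never fed directly into a descent argument. Granting that reduction, the torsor property — which rests only on $Q[p^k]$ becoming \'etale and geometrically constant after inverting $p$ — and the descent computation are both routine.
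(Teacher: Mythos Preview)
The paper does not supply its own proof of this proposition: it is stated as a citation of \cite[Corollary~6.8(iii)]{HKR} and left at that, so there is no argument in the paper to compare your proposal against. Your sketch is essentially a recapitulation of the HKR argument --- write $C_0$ as the filtered colimit of the finite-level rings $C_k$ classifying level-$p^k$ structures, observe that after inverting $p$ the $p^k$-torsion of $Q$ is finite \'etale with constant geometric fibres $(\integers/p^k)^n$, deduce that $\spec C_k$ is a $GL_n(\integers/p^k)$-torsor over $\spec(\pi_0(E)\otimes\rationals)$, and then use faithfully flat descent to identify the invariants at each finite level. This is correct, and your reduction of the profinite action to its finite quotients via the injectivity of the transition maps is exactly the right bookkeeping. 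The only place to be slightly careful is your claim that $C_0^{GL_n(\integers_p)} = \colim_k C_k^{GL_n(\integers/p^k)}$: you should note not only that an invariant of $C_0$ lies in some $C_k$ and is $GL_n(\integers/p^k)$-invariant there, but conversely that a $GL_n(\integers/p^k)$-invariant of $C_k$ remains $GL_n(\integers_p)$-invariant after mapping to $C_0$ (which is clear since the full group acts through the quotient on $C_k$). With that minor addition your argument is complete.
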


Thus there is a natural $GL_n(\integers_p)$-action on $\cl_n(G,C_0)$, given by the evident action on $G_n$ together with the action on scalars coming from Proposition~\ref{prop:actionongln}.

This allows us to formulate the main theorem of HKR character theory.

\begin{theorem}\cite[Theorem~C]{HKR}\label{thm:HKR}
    Let $E$ be a Morava $E$-theory of height $n$. There is a natural ring homomorphism
    \[
    \chi \colon \pi_0 E^{BG} \to \cl_n(G,C_0)
    \]
    for any finite group $G$, such that the induced map
    \[
    \chi \colon \pi_0 E^{BG} \otimes_{\pi_0(E)} C_0 \to \cl_n(G,C_0)
    \]
    is an isomorphism of rings.

    Via the natural $GL_n(\integers_p)$-action on $C_0$, the map
    \[
    \chi \colon \pi_0 E^{BG} \otimes \rationals \to \cl_n(G,C_0)
    \]
    lands in the fixed points and induces an isomorphism onto the subring of $GL_n(\integers_p)$-invariants.
\end{theorem}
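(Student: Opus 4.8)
The first isomorphism is the content of \cite[Theorem~C]{HKR}, so I would follow that argument; let me sketch the strategy and then explain how the rational refinement drops out by descent. First I would construct the character map $\chi$. A commuting $n$-tuple of $p$-power-torsion elements of $G$ is a homomorphism $\alpha\colon\integers_p^n\to G$, and its image $A=\im\alpha\subseteq G$ is a finite abelian $p$-group; restricting along $BA\to BG$ and then using the (yet to be proved) isomorphism for abelian groups gives an evaluation $\pi_0 E^{BG}\to\pi_0 E^{BA}\otimes_{\pi_0(E)}C_0\to C_0$, and since conjugate $\alpha$'s give conjugate maps $BA\to BG$ the result descends to a function on $G_n/G$, defining the ring homomorphism $\chi$. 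For abelian $G=A$ I would argue directly: by the Künneth isomorphism in $\tmnt{n}$ this reduces to $A=\integers/p^k$, where $\pi_0 E^{BA}\cong\pi_0(E)[[x]]/[p^k](x)$ as in Proposition~\ref{prop:rezkstrickland}, so $\spf\pi_0 E^{BA}$ is the $p^k$-torsion subgroup scheme $Q[p^k]$ of the Quillen formal group $Q$. By construction $C_0$ carries an isomorphism $(\rationals_p/\integers_p)^n\cong Q\times\spf C_0$; under it $Q[p^k]\times\spf C_0$ becomes the constant scheme on $\hom(\integers_p^n,\integers/p^k)=A_n$, and taking functions yields $\pi_0 E^{BA}\otimes_{\pi_0(E)}C_0\cong\cl_n(A,C_0)$, which unwinds to $\chi$. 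This part is a concrete formal-group computation.

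The plan for general $G$ is to reduce to the abelian case via an Artin-type induction theorem. The crucial input is that the sum of transfer (induction) maps from abelian subgroups, $\bigoplus_{A\le G\text{ abelian}}\pi_0 E^{BA}\otimes C_0\longrightarrow\pi_0 E^{BG}\otimes C_0$, is surjective. Granting this, $\chi\otimes C_0$ is injective by Frobenius reciprocity: if $\chi(x)=0$ then, by the abelian case, $\mathrm{res}^G_A(x)=0$ for every abelian $A$, and writing $1=\sum_A\mathrm{ind}_A^G(u_A)$ gives $x=\sum_A\mathrm{ind}_A^G(\mathrm{res}^G_A(x)\cdot u_A)=0$. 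Then I would finish by a rank count: $\cl_n(G,C_0)=\mapping(G_n/G,C_0)$ is free over $C_0$ of rank $\cardinality{G_n/G}$, and the induction statement together with the double-coset formula pins down $\pi_0 E^{BG}$ as a free $\pi_0(E)$-module of the same rank, so the injective ring map $\chi\otimes C_0$ is an isomorphism; equivalently, one observes that $G\mapsto\pi_0 E^{BG}\otimes C_0$ and $G\mapsto\cl_n(G,C_0)$ are both cohomological Mackey functors generated by their values on abelian subgroups, on which $\chi$ is an iso, which forces $\chi$ to be an iso in general.

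The hard part will be the Artin-type induction theorem itself — that inductions from abelian subgroups generate $\pi_0 E^{BG}$ after base change to $C_0$, with the attendant rank identity. This is where complex orientability and the genuine structure of $E$-cohomology of classifying spaces enter, rather than the formal bookkeeping above; I would take it from \cite{HKR}, where it is obtained by combining the double-coset formula with classical rational Artin induction, and I do not expect a significantly shorter route.

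For the rational refinement I would use faithfully flat descent along $C_0$ over $\pi_0(E)\otimes\rationals$. By Proposition~\ref{prop:actionongln} there is a $GL_n(\integers_p)$-action on $C_0$; one is given (as cited above, part of \cite[Corollary~6.8]{HKR}) that $C_0^{GL_n(\integers_p)}=\pi_0(E)\otimes\rationals$ and that $C_0$ is, in the appropriate pro-sense, a $GL_n(\integers_p)$-Galois extension of $\pi_0(E)\otimes\rationals$, in particular faithfully flat. One checks directly from the construction that $\chi$ is $GL_n(\integers_p)$-equivariant, the group acting on $\cl_n(G,C_0)$ both through the scalars $C_0$ and through its action on $G_n=\hom(\integers_p^n,G)$ by precomposition. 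Taking $GL_n(\integers_p)$-fixed points of the isomorphism $\pi_0 E^{BG}\otimes C_0\cong\cl_n(G,C_0)$ and applying Galois descent then identifies $\pi_0 E^{BG}\otimes\rationals$ with $\cl_n(G,C_0)^{GL_n(\integers_p)}$ and shows that $\chi$ on $\pi_0 E^{BG}\otimes\rationals$ has image exactly these invariants. This last step is routine once the Galois structure on $C_0$ is in hand.
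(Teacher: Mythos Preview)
The paper does not prove this theorem: it is stated as a citation of \cite[Theorem~C]{HKR}, and the text following it only recalls the construction of the character map $\chi$, without arguing that it is an isomorphism or addressing the rational refinement. So there is no proof in the paper for you to match.

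Your proposal goes further than the paper by sketching the actual argument from \cite{HKR}. The outline is broadly faithful to that source: define $\chi$ via restriction to images of commuting tuples (this matches the paper's recollection), verify the isomorphism for abelian $p$-groups by identifying $\spf\pi_0 E^{BA}$ with torsion in the Quillen formal group and using the defining trivialization over $C_0$, then bootstrap to general $G$ using an Artin-type induction theorem and a rank count, and finally obtain the rational statement by taking $GL_n(\integers_p)$-invariants. You are right that the induction step is where the real work lies and that it is reasonable to defer to \cite{HKR} for it. One small correction: in \cite{HKR} the induction result is not merely ``classical rational Artin induction plus double cosets'' but a generalized character-theoretic argument specific to complex-oriented theories with a suitable theory of Chern classes; and the passage to $GL_n(\integers_p)$-invariants uses that the action factors through finite quotients on each level rather than a literal profinite Galois descent, so you should be careful how you phrase that step.
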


The remainder of this subsection is devoted to recalling the definition of the character map~$\chi$.

To define the map
\[
\chi \colon \pi_0 E^{BG} \to \cl_n(G,C_0),
\]
it suffices to specify, for each conjugacy class $[f]$ with $f \in G_n$, a ring homomorphism
\[
\pi_0 E^{BG} \to C_0,
\]
since $\cl_n(G,C_0)$ is canonically a product of copies of $C_0$ indexed by the $G$-orbits in $G_n$.

Let $f \in \hom(\integers_p^n,G)$. The image
\[
\integers_p^n \to \im f
\]
is a finite abelian $p$-group. Although conjugation in $G$ changes the inclusion $\im f \subset G$, the induced maps
\[
E^{BG} \to E^{B\im f}
\]
are homotopic, since the corresponding maps of classifying spaces are homotopic. Thus, to define $\chi$, it suffices to construct a map
\[
\pi_0 E^{B\im f} \to C_0.
\]

\begin{proposition}
    The ring $C_0$ admits a canonical map from $\pi_0 E^{B(\integers/p^r)^n}$.
\end{proposition}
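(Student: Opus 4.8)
The plan is to produce the map by hand from the tautological splitting carried by $C_0$, after identifying $\pi_0 E^{B(\integers/p^r)^n}$ as the ring of functions on the $p^r$-torsion of the Quillen formal group $Q$. First I would record the $r$-fold version of Proposition~\ref{prop:rezkstrickland}(3), which holds by the same argument (see also \cite{STR}): if $x$ is the Euler class of the tautological line bundle on $B(\integers/p^r)$, then
\[
\pi_0 E^{B(\integers/p^r)} \cong \pi_0(E)[[x]]/[p^r](x),
\]
where $[p^r](x)$ is the $r$-fold iterate of the $p$-series of $Q$; this is a finite free $\pi_0(E)$-module, and its associated affine group scheme is $Q[p^r] = \ker\parenth{[p^r]\colon Q\to Q}$. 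Thus, writing $Q[p^r](R)$ for the group of $R$-points of this finite flat group scheme, a $\pi_0(E)$-algebra map $\pi_0 E^{B(\integers/p^r)}\to R$ is the same datum as an element of $Q[p^r](R)$. Since all the rings in sight are finite free over $\pi_0(E)$, the Künneth property in $\tmnt{n}$ --- already invoked in the proof of Proposition~\ref{prop:trasspan} --- gives $\pi_0 E^{B(\integers/p^r)^n}\cong\parenth{\pi_0 E^{B(\integers/p^r)}}^{\otimes_{\pi_0(E)}n}$, so that a $\pi_0(E)$-algebra map $\pi_0 E^{B(\integers/p^r)^n}\to R$ is precisely a group homomorphism $(\integers/p^r)^n\to Q[p^r](R)$.

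It therefore suffices to exhibit a canonical group homomorphism $(\integers/p^r)^n\to Q[p^r](C_0)$. By the definition of the splitting algebra, $C_0$ carries a tautological isomorphism $\phi\colon(\rationals_p/\integers_p)^n\xrightarrow{\sim}Q\times\spf C_0$. Restricting $\phi$ to $p^r$-torsion, and identifying $\parenth{(\rationals_p/\integers_p)^n}[p^r]\cong(\integers/p^r)^n$ using the distinguished generator $p^{-r}\bmod\integers_p$ of the cyclic group $(\rationals_p/\integers_p)[p^r]$ in each coordinate, produces an isomorphism $(\integers/p^r)^n\xrightarrow{\sim}Q[p^r](C_0)$, in particular a homomorphism, hence the desired map $\pi_0 E^{B(\integers/p^r)^n}\to C_0$. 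It is canonical because $\phi$ is tautological and the generator $p^{-r}\bmod\integers_p$ involves no choice. One can also realize $C_0$ as a suitable localization of $\colim_r \pi_0 E^{B(\integers/p^r)^n}$, in which description this map is simply the structure map into the colimit; but the universal-property route above is shorter.

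The only step requiring care is the restriction of $\phi$ to $p^r$-torsion: $Q$ is a priori a formal group, so one must check that this operation makes sense and is natural. This is exactly where one uses that $Q[p^r]$ is a genuine finite flat group scheme and that $C_0$, being rational, makes $Q\times\spf C_0$ étale; then $(Q\times\spf C_0)[p^r]$ is an étale form of the constant group $(\integers/p^r)^n$, rigidified by $\phi$, and the distinguished generator picks out a well-defined section. Granting this bookkeeping, naturality and canonicity are automatic and there is nothing left to compute.
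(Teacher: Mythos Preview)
Your proof is correct and follows essentially the same approach as the paper: both use the universal property of $\pi_0 E^{B(\integers/p^r)^n}$ as classifying $n$-tuples of $p^r$-torsion points in $Q$, and both produce the map by picking out the distinguished points $p^{-r}e_i \in (\rationals_p/\integers_p)^n$ under the tautological identification carried by $C_0$. Your version is more explicit about the K\"unneth reduction and the bookkeeping around restricting to $p^r$-torsion, while the paper simply asserts the universal property and names the points $e_i^r$; the content is the same.
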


\begin{proof}
    The ring $\pi_0 E^{B(\integers/p^r)^n}$ classifies $n$-tuples of points of order dividing $p^r$ in the Quillen formal group $Q$. Using the canonical identification
    \[
    (\rationals_p/\integers_p)^n \cong Q \times \spf C_0,
    \]
    we obtain $n$ distinguished $p^r$-torsion points corresponding to the elements
    \[
    e_i^r = (0,\dots,0,1/p^r,0,\dots,0) \in (\rationals_p/\integers_p)^n.
    \]
    There is a unique ring homomorphism
    \[
    \pi_0 E^{B(\integers/p^r)^n} \to C_0
    \]
    classifying these points.
\end{proof}

Since $\im f$ is finite, there exists $r$ sufficiently large such that the map
\[
\integers_p^n \to \im f
\]
factors through $(\integers/p^r)^n$. This yields a morphism
\[
\pi_0 E^{B\im f} \to \pi_0 E^{B(\integers/p^r)^n}.
\]
We therefore obtain a composite
\[
\chi_{[f]} \colon
\pi_0 E^{BG}
\to
\pi_0 E^{B\im f}
\to
\pi_0 E^{B(\integers/p^r)^n}
\to
C_0.
\]

It remains to check that this construction is independent of the choice of $r$. For $r+1 \ge r$, the projection
\[
\integers/p^{r+1} \to \integers/p^r
\]
induces a map
\[
\pi_0 E^{B(\integers/p^r)^n} \to \pi_0 E^{B(\integers/p^{r+1})^n}.
\]
On points, this map sends a $p^{r+1}$-torsion point $x$ to the $p^r$-torsion point $[p](x)$. In particular, it sends each $e_i^{r+1}$ to $e_i^r$, so the above composite is independent of $r$.

\begin{definition}
    The HKR character map is the ring homomorphism
    \[
    \chi \colon \pi_0 E^{BG} \to \cl_n(G,C_0).
    \]
    On the factor corresponding to the conjugacy class of
    \[
    f \colon \integers_p^n \to G,
    \]
    it is given by the composite
    \[
    \chi_{[f]} \colon
    \pi_0 E^{BG}
    \to
    \pi_0 E^{B\im f}
    \to
    \pi_0 E^{B(\integers/p^r)^n}
    \to
    C_0.
    \]
\end{definition}

\subsection{The character of the total power operation}

In \cite{BS}, the authors use HKR character theory to construct a map that allows one to calculate the rationalization of the total power operation for the ring $\pi_0 E^{BG}$. We devote the following discussion to stating the definition of this map in the case $G = \explicitset{e}$. Namely, we recall the definition of a map
\[
\Psi \colon C_0 \rightarrow \cl_n(\Sigma_m, C_0),
\]
for which:

\begin{proposition}\label{prop:psidiagram}\cite[Theorem 9.1]{BS}
    The following diagram commutes:
    \[
        \begin{tikzcd}
            \pi_0 E \ar[r, "\widetilde{\psi}"] \ar[d, "\chi"]
            & \pi_0 E^{B\Sigma_m} \ar[d, "\chi"]\\
            C_0 \ar[r, "\Psi"] & \cl_n(\Sigma_m, C_0)
        \end{tikzcd}
    \]
\end{proposition}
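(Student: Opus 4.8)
The plan is to verify commutativity of the square by unwinding the definition of the character map $\chi$ and the map $\Psi$ from \cite{BS}, and reducing the statement to a computation with torsion points of the Quillen formal group over the splitting algebra $C_0$. Since $\cl_n(\Sigma_m,C_0)$ is a finite product of copies of $C_0$ indexed by conjugacy classes $[f]$ of maps $f\colon\integers_p^n\to\Sigma_m$, it suffices to check commutativity after projecting to each such factor; that is, to show that for every $[f]$ the two composites $\pi_0 E\to C_0$, namely $\chi_{[f]}\circ\widetilde{\psi}$ and (the $[f]$-component of $\Psi$)$\circ\,\chi$, agree. The right-hand composite is, by the construction of $\Psi$ recalled above, again of the form $\chi_{[g]}$ for suitable data associated to $f$ — concretely, restricting $f$ to its image and then recording the induced torsion data on the Quillen formal group after base change along the relevant quotient map $X_1(p)$-style construction.

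First I would recall the key geometric input: by Proposition~\ref{prop:rezkstrickland} and Corollary~\ref{cor:complofmodcurves}, $\pi_0 E^{B\Sigma_m}$ (and its quotient by the transfer ideal) carries a modular interpretation in terms of isogenies and torsion structures on deformations of the Quillen formal group, and $\widetilde{\psi}$ is induced on these moduli by the operation ``pass to the quotient formal group along a chosen subgroup.'' Over $C_0$ the formal group $Q$ is identified with $(\rationals_p/\integers_p)^n$, so its $p$-power torsion is rigidified; a map $f\colon\integers_p^n\to\Sigma_m$ then produces, via the permutation action on $m$ points together with this rigidification, a canonical collection of torsion points of $Q\times\spf C_0$, and unwinding $\chi$ on $\pi_0 E^{B\Sigma_m}$ at the conjugacy class $[f]$ amounts to evaluating at exactly this collection. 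I would then observe that $\Psi$ is defined in \cite{BS} precisely so as to track how these torsion collections are produced from the single torsion datum classified by $\chi\colon\pi_0 E\to C_0$ — i.e.\ $\Psi$ encodes the decomposition of $\hom(\integers_p^n,\Sigma_m)$-orbits into orbit types and the corresponding sums/products of pulled-back torsion data.

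With both sides rewritten as ring maps $\pi_0 E\to C_0$ classifying torsion points, the commutativity becomes the statement that two such classifying maps coincide, which by the universal property defining $C_0$ (and the fact that $\pi_0 E^{B(\integers/p^r)^n}$ represents tuples of $p^r$-torsion points) reduces to checking equality of finitely many torsion points in $(\rationals_p/\integers_p)^n$ — a combinatorial identity about the permutation representation of $\integers_p^n$ on $m$ letters and the associated partition into orbits. I expect the main obstacle to be bookkeeping: matching the normalization in the definition of $\Psi$ from \cite{BS} (in particular the precise way orbit types contribute, and compatibility with the choice of $r$ in the construction of $\chi_{[f]}$) with the geometric description of $\widetilde{\psi}$ coming from Strickland and Rezk, and verifying independence of all the choices made ($r$, the representative $f$ of its conjugacy class, the inclusion $\im f\subset\Sigma_m$ up to conjugacy). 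Once the definitions are carefully aligned, the diagram commutes by construction, and indeed one can cite \cite[Theorem~9.1]{BS} directly; the role of this subsection is to make the identification transparent in the special case $G=\{e\}$ that we need.
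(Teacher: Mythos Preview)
The paper does not prove this proposition; it is stated with a direct citation to \cite[Theorem~9.1]{BS} and no further argument is given. Your proposal, by contrast, sketches an actual proof strategy (unwind $\chi$ and $\Psi$ factor by factor over conjugacy classes $[f]$, rewrite both composites as classifying maps for torsion data, and verify a combinatorial identity), which is roughly in the spirit of how \cite{BS} argues, and you correctly note at the end that one can simply cite the reference. So the discrepancy is not a mathematical gap but a mismatch of scope: the paper treats the result as a black box, while you outline how one would open it.

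That said, your sketch has a few imprecisions worth flagging if you intend it as more than a pointer. You invoke Corollary~\ref{cor:complofmodcurves} (moduli of elliptic curves), which is specific to height~$2$, whereas the proposition is stated and used at arbitrary height~$n$; the relevant modular input at this stage is Proposition~\ref{prop:rezkstrickland} alone. Also, the description of $\widetilde{\psi}$ as ``pass to the quotient formal group along a chosen subgroup'' is the geometric content of the \emph{additive} operation $\psi^p$ after killing transfers, not of the full total power operation $\widetilde{\psi}$ on $\pi_0 E^{B\Sigma_m}$; the latter genuinely involves all of $\Sum_m$, not just a single isogeny, and this is exactly what the formula in Definition~\ref{def:formulapsi} encodes. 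If you tighten those two points, your outline matches the shape of the argument in \cite{BS}; but for the purposes of this paper, the citation is all that is required.
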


\begin{remark}
    The formula we give for $\Psi$ here uses finite-index subgroups $H \subset \integers_p^n$, whereas the definition in \cite{BS} uses finite subgroups $H \subset (\rationals_p/\integers_p)^n$. These are in bijection with each other via the assignment sending a finite subgroup $H \subset (\rationals_p/\integers_p)^n$ to the image of the dual of the isogeny
    \[
        (\rationals_p/\integers_p)^n \longrightarrow (\rationals_p/\integers_p)^n/H .
    \]
\end{remark}

\begin{definition}
    Let
    \[
        \Sum_m = \explicitset{\sum H_i \mid H_i \leq \integers_p^n,\;\; \sum \cardinality{\integers_p^n : H_i} = m } .
    \]
    This is the set of formal sums of subgroups of $\integers_p^n$ whose indices sum to $m$.
\end{definition}

\begin{proposition}
    There is a natural isomorphism $(\Sigma_m)_n \rightarrow \Sum_m(\integers_p^n)$ in the sense of Definition~\ref{def:nclasses}.
\end{proposition}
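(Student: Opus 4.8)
The plan is to set up a $\Sigma_m$-equivariant bijection on the level of underlying sets and then check it descends to orbits. Recall that $(\Sigma_m)_n = \hom(\integers_p^n, \Sigma_m)$, and a homomorphism $f \colon \integers_p^n \to \Sigma_m$ is the same datum as an action of $\integers_p^n$ on the set $\{1,\dots,m\}$. First I would decompose such an action into orbits: the $\integers_p^n$-set $\{1,\dots,m\}$ splits canonically as a disjoint union of transitive $\integers_p^n$-sets, and a transitive $\integers_p^n$-set is of the form $\integers_p^n/H$ for a finite-index subgroup $H \leq \integers_p^n$, with $|\integers_p^n : H|$ equal to the size of the orbit. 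Since the orbit sizes must sum to $m$, this exhibits the orbit decomposition as an element of $\Sum_m$: the multiset of stabilizer subgroups $\{H_i\}$ with $\sum |\integers_p^n : H_i| = m$. This is the map $(\Sigma_m)_n \to \Sum_m$.

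Next I would check that this assignment is constant on $\Sigma_m$-conjugacy classes and that the induced map on the quotient $(\Sigma_m)_n/\Sigma_m \to \Sum_m$ is a bijection. Conjugating $f$ by $\sigma \in \Sigma_m$ relabels the points of $\{1,\dots,m\}$, hence permutes the orbits and conjugates each stabilizer $H_i$ inside $\integers_p^n$; but $\integers_p^n$ is abelian, so conjugation inside it is trivial, and the multiset $\{H_i\}$ is unchanged. For surjectivity, given $\{H_i\}$ with $\sum|\integers_p^n : H_i| = m$, form the $\integers_p^n$-set $\coprod_i \integers_p^n/H_i$, which has exactly $m$ elements; choosing any bijection with $\{1,\dots,m\}$ gives an $f$ mapping to $\{H_i\}$. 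For injectivity, two homomorphisms with the same multiset of stabilizers give isomorphic $\integers_p^n$-sets on $\{1,\dots,m\}$, and any isomorphism of $\integers_p^n$-sets is realized by an element of $\Sigma_m$, so the two homomorphisms are conjugate.

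Finally I would address naturality. The phrase "natural isomorphism in the sense of Definition~\ref{def:nclasses}" refers to functoriality in the $GL_n(\integers_p)$-action: $GL_n(\integers_p)$ acts on $(\Sigma_m)_n$ through its action on $\integers_p^n$ (precomposition) and on $\Sum_m$ through its action on the subgroup lattice of $\integers_p^n$. I would check that the orbit-decomposition map intertwines these: precomposing $f$ with $g \in GL_n(\integers_p)$ replaces each stabilizer $\stab(x) = f^{-1}(\stab_{\Sigma_m}(x))$ by $g^{-1}\stab(x)$, which is exactly the action on $\Sum_m$.

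I expect the only subtle point to be the claim that a transitive action of $\integers_p^n$ on a finite set has finite-index stabilizer and that $\Sum_m$ as defined (finite-index subgroups, not all subgroups) is the correct target — this is forced by finiteness of the orbits, since $|\integers_p^n : \stab(x)|$ is the orbit size, hence finite. Everything else is the standard orbit–stabilizer bookkeeping, transported to the pro-$p$ abelian group $\integers_p^n$; there are no convergence or topological issues because only finite-index (hence open) subgroups occur.
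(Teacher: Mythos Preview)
Your proposal is correct and follows essentially the same route as the paper: define the map by orbit decomposition and stabilizers, invoke orbit--stabilizer for the index condition, and use that conjugacy of homomorphisms into $\Sigma_m$ is equivalent to isomorphism of the associated $G$-sets. You are in fact more explicit than the paper on two points---you spell out surjectivity by constructing $\coprod_i \integers_p^n/H_i$ and you verify $GL_n(\integers_p)$-equivariance---whereas the paper leaves these implicit; the one minor wobble is your remark that conjugation by $\sigma\in\Sigma_m$ ``conjugates each stabilizer $H_i$ inside $\integers_p^n$'' and that abelianness saves you: in fact conjugating by $\sigma$ simply relabels points, so the stabilizer of $\sigma(x)$ under the new action equals the stabilizer of $x$ under the old one, and abelianness of $\integers_p^n$ plays no role there.
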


\begin{proof}
    The map is defined as follows. A morphism $f \colon \integers_p^n \rightarrow \Sigma_m$ determines an action of $\integers_p^n$ on the set $[m] = \explicitset{1,\ldots,m}$. Denote the orbits of this action by $O_1,\ldots,O_k$. We send the class $[f]$ to the formal sum $\sum \stab O_i$ of the stabilizers of these orbits. By the orbit--stabilizer theorem, we have
    \[
        \sum \cardinality{\integers_p^n : \stab O_i} = m .
    \]

    It is a classical result in the theory of permutation groups that two elements are conjugate if and only if they have the same orbit sizes. To see that the map above is an isomorphism, we need the following generalization: two homomorphisms $\phi,\psi \colon G \rightarrow \Sigma_m$ are conjugate if and only if they define isomorphic $G$-sets of size $m$.

    If $\sigma \colon [m]_\phi \rightarrow [m]_\psi$ is an isomorphism of $G$-sets, in the sense that
    \[
        \sigma(\phi(g)(i)) = \psi(g)(\sigma(i))
    \]
    for all $g \in G$ and $i \in [m]$, then by definition $\phi$ and $\psi$ are conjugate by $\sigma$. The converse direction follows by reversing this argument.
\end{proof}

\begin{proposition}\cite[Proposition 4.1]{BS}\label{prop:actionofmn}
    The ring $C_0$ carries a natural left action of the monoid $M_n^{\geq 0}$ of all matrices in $M_n(\integers_p)$ with nonvanishing determinant, extending the action from Proposition~\ref{prop:actionongln}.
\end{proposition}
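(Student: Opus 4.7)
The plan is to extend the Yoneda argument of Proposition~\ref{prop:actionongln}, allowing the underlying deformation to change. Since a non-invertible matrix induces an isogeny rather than an automorphism of the formal group, I first enlarge the viewpoint: view $C_0$ as a plain rational ring (forgetting its $\pi_0(E)$-algebra structure). In this guise $C_0$ represents the functor $F$ on rational rings sending $R$ to the set of pairs $(\alpha,\phi)$, where $\alpha\colon\pi_0(E)\to R$ is a ring map and $\phi\colon(\rationals_p/\integers_p)^n\xrightarrow{\sim} Q\times_{\pi_0(E),\alpha}\spf R$ is an isomorphism. By the Yoneda lemma, constructing a left action of $M_n^{\geq 0}$ on $C_0$ is equivalent to giving a natural right action on $F$.

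To build this action, take $A\in M_n^{\geq 0}$. Pontrjagin dualizing the injection $A\colon\integers_p^n\hookrightarrow\integers_p^n$ produces a surjection $A^\vee\colon(\rationals_p/\integers_p)^n\twoheadrightarrow(\rationals_p/\integers_p)^n$ with finite kernel $K_A\cong\integers_p^n/A\integers_p^n$. Given $(\alpha,\phi)\in F(R)$, set $Q_\alpha:=Q\times_{\pi_0(E),\alpha}\spf R$ and transport $K_A$ via $\phi$ to obtain a finite subgroup scheme $\phi(K_A)\subset Q_\alpha$. The quotient $Q_\alpha/\phi(K_A)$ is again a height $n$ formal group over $R$; since $R$ is rational and $\pi_0(E)$ is the Lubin--Tate ring representing deformations of the chosen supersingular formal group, there is a unique $\alpha'\colon\pi_0(E)\to R$ with $Q_\alpha/\phi(K_A)\cong Q_{\alpha'}$. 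Descending $\phi$ along the two quotients yields an isomorphism $(\rationals_p/\integers_p)^n/K_A\xrightarrow{\sim} Q_{\alpha'}$; precomposing with the inverse of the isomorphism $(\rationals_p/\integers_p)^n/K_A\xrightarrow{A^\vee}(\rationals_p/\integers_p)^n$ induced by $A^\vee$ gives the desired $\phi'$. Set $(\alpha,\phi)\cdot A:=(\alpha',\phi')$.

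Verifying that this is a right action is routine: naturality in $R$ follows from naturality of quotients and base change, while associativity $((\alpha,\phi)\cdot A)\cdot B=(\alpha,\phi)\cdot(AB)$ uses $(AB)^\vee=B^\vee A^\vee$, so $K_{AB}=(A^\vee)^{-1}(K_B)$, and the iterated quotient of $Q_\alpha$ first by $\phi(K_A)$ and then by the transported copy of $K_B$ agrees with $Q_\alpha/\phi(K_{AB})$. When $A\in Gl_n(\integers_p)$ the kernel $K_A$ is trivial, so $\alpha'=\alpha$ and $\phi'=\phi\circ(A^\vee)^{-1}$, recovering the action of Proposition~\ref{prop:actionongln}. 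The main obstacle is the Lubin--Tate step: one must check that $Q_\alpha/\phi(K_A)$ is a deformation of the reference supersingular formal group to which the universal property applies (this follows from the preservation of height under isogenies of $p$-divisible groups), and that the resulting assignment $\alpha\mapsto\alpha'$ depends functorially on $R$ and associatively on $A$.
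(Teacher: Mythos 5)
Your construction is essentially the same as the paper's sketch: for $A\in M_n^{\geq 0}$ you quotient the deformation by the image under the trivialization of the (finite) kernel of the dual isogeny $A^\vee$ on $(\rationals_p/\integers_p)^n$, reidentify the quotient with $(\rationals_p/\integers_p)^n$ via the first isomorphism theorem, and invoke the universal property of $C_0$ to obtain the new point; the case of invertible $A$ recovers the $GL_n(\integers_p)$-action exactly as in the paper. Your version adds the (correct) verification of associativity via $K_{AB}=(A^\vee)^{-1}(K_B)$, which the paper's sketch omits, but the approach is the same.
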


\begin{proof}[sketch of proof]
    Let $A \in M_n(\integers_p)$ have nonvanishing determinant. Then $A$ defines an isogeny
    \[
        [A] \colon (\rationals_p/\integers_p)^n \rightarrow (\rationals_p/\integers_p)^n .
    \]
    Let $M = \ker[A] \subset (\rationals_p/\integers_p)^n$ denote the kernel of the map induced by $A$ via its action on $(\rationals_p/\integers_p)^n$, obtained using the identification
    \[
        (\rationals_p/\integers_p)^n \cong \hom(\integers_p^n, \rationals_p/\integers_p) .
    \]

    Let $R$ be a complete local ring and let $p \colon C_0 \rightarrow R$ be a point classifying an isomorphism
    \[
        s \colon (\rationals_p/\integers_p)^n \xrightarrow{\sim} Q \times \spf R ,
    \]
    where $Q \times \spf R$ is a deformation. This induces an isomorphism
    \[
        (\rationals_p/\integers_p)^n/M \rightarrow (Q \times \spf R)/s(M) .
    \]

    There is a canonical isomorphism
    \[
        \psi^H \colon (\rationals_p/\integers_p)^n/M \rightarrow (\rationals_p/\integers_p)^n
    \]
    given by the first isomorphism theorem applied to the map $A$. Indeed, the short exact sequence
    \[
        0 \rightarrow H \rightarrow (\rationals_p/\integers_p)^n \xrightarrow{A} (\rationals_p/\integers_p)^n \rightarrow 0
    \]
    exhibits $(\rationals_p/\integers_p)^n$ as the quotient of $(\rationals_p/\integers_p)^n$ by $H$.

    We obtain a formal group $(Q \times \spf R)/s(M)$ over $R$ together with an isomorphism
    \[
        (\rationals_p/\integers_p)^n \rightarrow (Q \times \spf R)/s(M)
    \]
    given by the composition
    \[
        (\rationals_p/\integers_p)^n \rightarrow (\rationals_p/\integers_p)^n/M \rightarrow (Q \times \spf R)/s(M) .
    \]
    The formal group $(Q \times \spf R)/s(M)$ is again a deformation. Thus there is a corresponding $R$-point
    \[
        A^*p \colon C_0 \rightarrow R
    \]
    classifying $(Q \times \spf R)/s(M)$ together with the above isomorphism.

    Taking $R = C_0$, we obtain the desired action. Note that if $A$ is an isomorphism, then $M$ is trivial; in this case the action is given by composition with the canonical action on $(\rationals_p/\integers_p)^n$.
\end{proof}

For any subgroup $H \subset \integers_p^n$ of index $\leq p$, we choose a matrix $\phi_H \in M_n(\integers_p)$ with image $H$.

We can now give a formula for $\Psi$.

\begin{definition}\label{def:formulapsi}
    Given an element $a \in C_0$, set
    \[
        \Psi(a)\!\left(\sum H_i\right) = \prod (\phi_{H_i})_* a ,
    \]
    where $\phi_{H_i}$ is any matrix in $M_n^{\geq 0}$ with image $H_i$.
\end{definition}
\begin{remark}
    The map $\Psi$ depends strongly on the choice of the matrices $\phi_{H_i}$. However, Proposition~\ref{prop:psidiagram} holds for any such choice, essentially because the character lands in the fixed points of the $GL_n(\integers_p)$-action.
\end{remark}

In order to prove Theorem~\ref{thm:charactercomputation}, we need the following.

\begin{proposition}\label{prop:transideal}\cite[Proposition 6.2]{BS}
    The transfer ideal is $\pi_0 E^{B\Sigma_p} \cap J$, where $J$ is the factor of $\cl_n(\Sigma_p, C_0)$ corresponding to $\sum_{i=1}^p \integers_p^n$, a sum of $p$ copies of the whole group.
\end{proposition}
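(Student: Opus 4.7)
The approach is to establish the two inclusions $T_{\Sigma_p} \subset \pi_0 E^{B\Sigma_p} \cap J$ and $\pi_0 E^{B\Sigma_p} \cap J \subset T_{\Sigma_p}$ separately. The intersection on the right-hand side makes sense because $\chi$ is already injective on $\pi_0 E^{B\Sigma_p}$: this ring is torsion-free (being finite free over $\pi_0 E$ by Proposition~\ref{prop:rezkstrickland}), while after rationalization Theorem~\ref{thm:HKR} identifies it with the $GL_n(\integers_p)$-invariants of $\cl_n(\Sigma_p, C_0)$.

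For the first inclusion, it suffices to check that the character of any transferred element vanishes at every nontrivial conjugacy class of homomorphisms $\integers_p^n \to \Sigma_p$. A generator of $T_{\Sigma_p}$ has the form $\tr(y)$ for some $y \in \pi_0 E^{B(\Sigma_r \times \Sigma_t)}$ with $r+t = p$ and $r,t<p$. Compatibility of $\chi$ with the transfer-inducing span $B(\Sigma_r \times \Sigma_t) \leftarrow B(\Sigma_r\times\Sigma_t) \to B\Sigma_p$ yields a Mackey-type formula expressing $\chi(\tr(y))([f])$ as a sum, over $\Sigma_r\times\Sigma_t$-conjugacy classes of homomorphisms $g\colon \integers_p^n \to \Sigma_r\times\Sigma_t$ that become $\Sigma_p$-conjugate to $f$, of the terms $\chi(y)([g])$. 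For nontrivial $[f]$, the image of $f$ is a cyclic subgroup generated by a $p$-cycle, and a $p$-cycle preserves no proper partition of $\{1,\ldots,p\}$, hence lies in no conjugate of the Young subgroup $\Sigma_r \times \Sigma_t$. No lift $g$ can exist, the sum is empty, and $\chi(\tr(y))([f])=0$.

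For the reverse inclusion, the first inclusion lets us factor the composite $\pi_0 E^{B\Sigma_p} \to \cl_n(\Sigma_p, C_0) \to \cl_n(\Sigma_p, C_0)/J$ through the quotient by $T_{\Sigma_p}$, yielding
\[ \bar\chi\colon \pi_0 E^{B\Sigma_p}/T_{\Sigma_p} \longrightarrow \cl_n(\Sigma_p, C_0)/J. \]
Since $\chi$ is injective, the desired inclusion is equivalent to the injectivity of $\bar\chi$. By Proposition~\ref{prop:rezkstrickland}, the left-hand side represents degree-$p$ isogenies of the Quillen formal group and is a finite free $\pi_0(E)$-module. After tensoring with $C_0$, the Quillen formal group becomes isomorphic to $(\rationals_p/\integers_p)^n$, whose degree-$p$ subgroup schemes are indexed by the $(p^n-1)/(p-1)$ index-$p$ subgroups $H \subset \integers_p^n$. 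Each such $H$ determines both a $C_0$-point of $\pi_0 E^{B\Sigma_p}/T_{\Sigma_p}$ (the corresponding quotient isogeny) and a nontrivial conjugacy class in $(\Sigma_p)_n$ (any $f$ with $\ker f = H$). Unwinding the construction of $\chi$ via $p$-power torsion points, these two indexings match, and $\bar\chi \otimes C_0$ becomes the tautological isomorphism between two descriptions of the ring of $C_0$-valued functions on index-$p$ subgroups of $\integers_p^n$; in particular $\bar\chi$ is injective.

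The principal technical point is the transfer-character compatibility invoked in the second paragraph. While it follows conceptually from the naturality of $\chi$ with respect to spans of finite groupoids, writing it out carefully requires revisiting the construction of $\chi_{[f]}$ through $\pi_0 E^{B(\integers/p^r)^n} \to C_0$ and verifying a double-coset identity. An alternative is to appeal directly to a Frobenius-reciprocity form of HKR character theory; this is the route taken in the referenced proof \cite[Proposition~6.2]{BS}.
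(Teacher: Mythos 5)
The paper offers no proof of this proposition at all --- it is quoted directly from \cite[Proposition~6.2]{BS} --- so there is nothing internal to compare your argument against; what you have written is essentially the standard argument (and close in spirit to the cited one), and its structure is sound. The forward inclusion via the observation that a $p$-cycle preserves no proper partition of $\{1,\dots,p\}$, and the reverse inclusion via injectivity of the induced map $\bar\chi$ on $\pi_0E^{B\Sigma_p}/T_{\Sigma_p}$, are both correct. Two points deserve attention. First, the transfer--character compatibility is not a formality you should defer: it is a theorem of \cite{HKR} (their induction formula for generalized characters), and you should cite it as such rather than propose to rederive it, since the double-coset bookkeeping is exactly where the content sits; with that citation in hand the forward inclusion is complete. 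Second, your claim that $\bar\chi\otimes C_0$ ``becomes the tautological isomorphism'' is asserted rather than proved, and making it precise requires the duality between index-$p$ subgroups of $\integers_p^n$ and order-$p$ subgroups of $(\rationals_p/\integers_p)^n$ together with an unwinding of $\chi_{[f]}$ through Strickland's identification. You can sidestep this: by Theorem~\ref{thm:HKR} the map $\chi\otimes C_0$ is surjective, hence so is the induced map $\parenth{\pi_0E^{B\Sigma_p}/T_{\Sigma_p}}\otimes_{\pi_0E} C_0\to \cl_n(\Sigma_p,C_0)/J$; both sides are finite free $C_0$-modules of rank $(p^n-1)/(p-1)$ (Strickland's computation of the rank of $\pi_0E^{B\Sigma_p}/T_{\Sigma_p}$ on the left, the count of nontrivial conjugacy classes in $(\Sigma_p)_n$ on the right), and a surjection of finite free modules of equal rank over a commutative ring is an isomorphism. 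Injectivity of $\bar\chi$ itself then follows because $\pi_0E^{B\Sigma_p}/T_{\Sigma_p}$ is finite free over $\pi_0(E)$ and $\pi_0(E)\to C_0$ is injective, which is the reduction you already set up.
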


Now we can prove Theorem~\ref{thm:charactercomputation}.

\begin{proof}[Proof of Theorem~\ref{thm:charactercomputation}]
    By Theorem~\ref{thm:HKR}, we have a splitting
    \begin{equation}\label{eq:have}
        \pi_0 E^{B\Sigma_p} \otimes_{\pi_0 E} C_0
        \cong
        \pi_0 E \otimes_{\pi_0 E} C_0
        \times
        \parenth{\pi_0 E^{B\Sigma_p}/T_{\Sigma_p} \otimes_{\pi_0 E} C_0}.
    \end{equation}
    This comes from viewing the first factor $\pi_0 E \otimes_{\pi_0 E} C_0$ as $J$, and the second factor as
    $\pi_0 E^{B\Sigma_p}/T_{\Sigma_p} \otimes_{\pi_0 E} C_0$, which corresponds to sums involving proper subgroups.

    We first want to show that this is a base change of the desired splitting
    \begin{equation}\label{eq:want}
        \pi_0 E^{B\Sigma_p} \otimes \rationals
        \cong
        \pi_0 E \otimes \rationals
        \times
        \parenth{\pi_0 E^{B\Sigma_p}/T_{\Sigma_p} \otimes \rationals}.
    \end{equation}
    The left-hand side of~\eqref{eq:have} carries a $GL_n(\integers_p)$-action whose fixed points are precisely the left-hand side of~\eqref{eq:want}. To obtain the result, we first show that the splitting in~\eqref{eq:have} is respected by this action, namely that the action preserves each factor separately.

    Indeed, the action of $GL_n(\integers_p)$ on $(\Sigma_p)_n = \hom(\integers_p^n, \Sigma_p)$ fixes the trivial homomorphism, which corresponds to the first factor in the splitting of Theorem~\ref{thm:HKR}. Thus the action preserves $J$ as a subset. It follows that projection to the first factor on the right-hand side of~\eqref{eq:want} is given by evaluation at the trivial element of the formal group, while projection to the second factor is localization away from the trivial element.

    Furthermore, in the diagram of Proposition~\ref{prop:psidiagram}, the character maps $\chi$ land in the fixed points of the $GL_n(\integers_p)$-action by Theorem~\ref{thm:HKR}. We therefore obtain that the rationalization of $\widetilde{\psi^p}$ is given by the fixed points of $\Psi$, which is an equivariant map.

    On the second coordinate, we have the restriction of $\psi^p \otimes \rationals$ to
    $\pi_0 E^{B\Sigma_p}/T_{\Sigma_p} \otimes \rationals$, which agrees with $\psi^p$ by Proposition~\ref{prop:psidiagram}. On the first coordinate, we have the restriction of $\Psi$ to $J$, which by Definition~\ref{def:formulapsi} is given by
    \[
        x \longmapsto \prod_{i=1}^p (\phi_{\explicitset{e}})_* x = x^p .
    \]
\end{proof}

\section{Computation of $\delta$ at height $2$ and prime $3$}\label{sec:computation}

We have gathered all the tools needed to compute $\delta \colon \pi_0 E \rightarrow \pi_0 E$ at height $2$ and prime $3$. The following is a more precise version of Theorem~\ref{thm:computationofdelta}. In this section $E$ is morava $E$ theory of height $2$ over the prime $3$.

\begin{notation}\label{notation:ugly_stuff}
    \begin{enumerate}
        \item Let
    \[
    A=\begin{pmatrix}
        0 & 0 & 0 & 0 & 0 & 0 & 0 & 3 \\
        1 & 0 & 0 & 0 & 0 & 0 & 0 & 3 \, c \\
        0 & 1 & 0 & 0 & 0 & 0 & 0 & h - 9 \\
        0 & 0 & 1 & 0 & 0 & 0 & 0 & -9 \, c \\
        0 & 0 & 0 & 1 & 0 & 0 & 0 & -6 \, h + 12 \\
        0 & 0 & 0 & 0 & 1 & 0 & 0 & -{\left(h - 1\right)}c - 7 \, c \\
        0 & 0 & 0 & 0 & 0 & 1 & 0 & -3 \, h + 3 \\
        0 & 0 & 0 & 0 & 0 & 0 & 1 & -3 \, c
    \end{pmatrix},
    \]
    be the companion matrix of $f$.
    \item Let
    \begin{align*}
        B &=
        \frac{
            c(6 \, h^{2} - 111 \, h + 361 \, )\,A^7
            +(17 \, h^{3} - 334 \, h^{2} + 1377 \, h - 1124)\,A^6
        }{h-17}\\
        &+
        \frac{
            3\, c \parenth{5 \,  h^{3} - 99 \,  h^{2} + 416 \,  h - 354 \, } A^5
            +\parenth{3 \, h^{4} - 18 \, h^{3} - 566 \, h^{2} + 3134 \, h - 2617}A^4
        }{h-17}\\
        &+
        \frac{
            c\parenth{ h^{4} - 56 \, h^{3} + 855 \,  h^{2} - 4069 \,h + 5509 } A^3
            +\parenth{3 \, h^{4} - 114 \, h^{3} + 1323 \, h^{2} - 4564 \, h + 3032}A^2
        }{h-17}\\
        &+
        \frac{
            3 \, c\parenth{ h^{3} - 17 \,  h^{2} + 48 \,  h - 128} A
            +h^{4} - 44 \, h^{3} + 579 \, h^{2} - 2301 \, h + 1557
        }{h-17}.
    \end{align*}
    This is a polynomial in $A$ with coefficients in $\pi_0 E$, since $h-17$ is invertible in $\pi_0 E$.
    \item Let $x \in \pi_0 E = \integers_3[[h]][c]/(c^2+1-h)$ be regarded as a formal power series in the variable $h$, and let $x(B)$ denote the matrix-valued series obtained by substituting $h = B$.
    \end{enumerate}
\end{notation}

\begin{proposition}
    For any $x \in \pi_0 E = \integers_3[[h]][c]/(c^2+1-h)$, the series $x(B)$ converges to a matrix $x(B) \in M_8(\pi_0 E)$ in the $(3,h)$ adic topology.
\end{proposition}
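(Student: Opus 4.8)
The plan is to deduce the statement from a single claim, namely that the matrix $B$ is \emph{topologically nilpotent} in $M_8(\pi_0 E)$: that is, $B^n \to 0$ in the $(3,h)$-adic topology. First I would explain why this suffices. Writing any $x \in \pi_0 E$ in the $\integers_3[[h]]$-basis $\explicitset{1,c}$ of $\pi_0 E$ gives $x = \sum_{n\ge 0} x_n h^n$ with $x_n \in \integers_3 \oplus \integers_3 c \subseteq \pi_0 E$, and ``substituting $h = B$'' produces the series $x(B) = \sum_{n\ge 0} x_n B^n$ in $M_8(\pi_0 E)$. Set $\mathfrak m = (3,h)$. Topological nilpotence of $B$ means that for each $N$ there is $n_0$ with $B^n \in \mathfrak m^N M_8(\pi_0 E)$ whenever $n \ge n_0$; since $\mathfrak m^N M_8(\pi_0 E)$ is a $\pi_0 E$-submodule, we also get $x_n B^n \in \mathfrak m^N M_8(\pi_0 E)$ for $n \ge n_0$. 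Thus the terms of the series tend to $0$ and its partial sums form a Cauchy sequence. As $\pi_0 E$ is Noetherian, local and $(3,h)$-adically complete (it is the universal deformation ring, finite free over the complete ring $\integers_3[[h]]$), the module $M_8(\pi_0 E)$ is complete and separated for the $(3,h)$-adic topology, so the series converges to an element $x(B) \in M_8(\pi_0 E)$ — which is exactly the assertion.

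So the whole proof reduces to showing $B^n \to 0$. For this I would pass to the residue field: $\pi_0 E$ is local with maximal ideal $\mathfrak m = (3,h)$ and residue field $\finitefield_9$, and it is enough to show that the reduction $\bar B \in M_8(\finitefield_9)$ of $B$ modulo $\mathfrak m$ is nilpotent. Indeed, if $\bar B^{k} = 0$ then $B^{k} \in \mathfrak m\, M_8(\pi_0 E)$, hence $B^{kj} = (B^k)^j \in \mathfrak m^{j} M_8(\pi_0 E)$, and these shrink to $0$.

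To see that $\bar B$ is nilpotent I would first reduce the companion matrix $A$ modulo $\mathfrak m$. Every entry of the last column of $A$, namely $3,\ 3c,\ h-9,\ -9c,\ -6h+12,\ -(h-1)c-7c,\ -3h+3,\ -3c$, visibly lies in $(3,h)$, so $\bar A$ is the companion matrix of $t^8$ over $\finitefield_9$ — the matrix with $1$'s on the subdiagonal and $0$'s elsewhere — which satisfies $\bar A^8 = 0$. By construction (item (2) of Notation~\ref{notation:ugly_stuff}) $B$ is a polynomial $B = \sum_{i=0}^{7} \beta_i A^i$ with $\beta_i \in \pi_0 E$, the denominators $h-17$ being units since $17 \in \integers_3^{\times}$ and $h - 17 = -17(1 - h/17)$ with $h/17 \in \mathfrak m$. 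Reducing modulo $\mathfrak m$ gives $\bar B = P(\bar A)$ with $P(t) = \sum_{i=0}^7 \bar\beta_i t^i \in \finitefield_9[t]$, and its constant term $P(0) = \bar\beta_0$ is the reduction of the coefficient of $A^0$ in $B$, i.e.\ of $\dfrac{h^4 - 44h^3 + 579h^2 - 2301h + 1557}{h-17}$. Modulo $\mathfrak m$ the numerator becomes $1557 \equiv 0 \pmod 3$ (as $1557 = 3\cdot 519$) and the denominator becomes $-17 \equiv 1 \pmod 3$, so $\bar\beta_0 = 0$. Hence $P(t) = t\,Q(t)$ and $\bar B = \bar A\, Q(\bar A)$; since $\bar A$ commutes with $Q(\bar A)$ and $\bar A^8 = 0$, we get $\bar B^8 = \bar A^8\, Q(\bar A)^8 = 0$. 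Therefore $\bar B$ is nilpotent, $B$ is topologically nilpotent, and the proposition follows.

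The conceptual heart of the argument is that modulo $(3,h)$ the companion matrix $A$ degenerates to the nilpotent shift matrix, and that $B$ — a polynomial in $A$ whose constant term vanishes modulo $(3,h)$ — therefore lies in the nilpotent ideal generated by $\bar A$. The only genuine work, and the place where a slip is most likely, is the bookkeeping in the last paragraph: confirming that all eight entries of the last column of $A$ lie in $(3,h)$, correctly reading off the $A^0$-coefficient of $B$ from the four-line display, and checking the divisibilities $3 \mid 1557$ and $3 \nmid 17$.
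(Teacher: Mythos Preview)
Your proof is correct and follows essentially the same approach as the paper: reduce to showing $B$ is topologically nilpotent, check this by passing to the residue field $\pi_0 E/(3,h)$, observe that $\bar A$ is nilpotent because the last column of $A$ lies in $(3,h)$ (equivalently $f \equiv u^8$), and conclude that $\bar B = P(\bar A)$ is nilpotent because the constant term $\tfrac{1557}{-17}$ of $P$ vanishes modulo $(3,h)$. Your write-up is in fact more careful than the paper's, spelling out why topological nilpotence of $B$ yields convergence in the complete module $M_8(\pi_0 E)$ and giving the explicit bound $\bar B^8 = 0$.
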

\begin{proof}
    In order to show that $x(B)$ converges we must show that $B^n \xrightarrow{n\rightarrow \infty} 0$. A matrix's powers converge to $0$ in the adic topology if and only if, modulo each maximal ideal it is nilpotent.
    
    We first show that $A^n \xrightarrow{n\rightarrow \infty} 0$. Since $A$ is the companion matrix of $f$ and $f\equiv x^8\pmod{(3,h)}$ it is nilpotent modulo the maximal ideal. $B$ is nilpotent modulo the maximal ideal since the constant term of the polynomial from defining $B$ in terms of $A$ is \[\frac{h^{4} - 44 \, h^{3} + 579 \, h^{2} - 2301 \, h + 1557}{h-7} \equiv 0 \pmod{(3,h)}.\]
\end{proof}

The goal of this section is to prove the following Theorem.

\begin{theorem}\label{thm:computationofdelta}
    The operation $\delta$ is given by
    \[
        \delta(x) = 3\, x - \frac{x^3 + \tr x(B)}{3}.
    \]
\end{theorem}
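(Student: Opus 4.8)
The plan is to reduce $\delta$ to an algebraic trace and then evaluate that trace using the matrices of Notation~\ref{notation:ugly_stuff}. First I would unwind the definition $\delta(x)=\cardinality{BC_3}_E\cdot x-\alpha(x)$. Since $\cardinality{BC_p}_E$ is multiplication by $p^{n-1}$ (the value used in the proof of Proposition~\ref{prop:trasspan}, via \cite[Lemma~5.3.4]{CSY1}), at $n=2$, $p=3$ the first term is $3x$. By Proposition~\ref{prop:alphaistraceoftotalpo}, $\alpha(x)=\tfrac13\tr_{\pi_0E^{BC_3}/\pi_0E}\bigl(\widetilde{\psi^{C_3}}(x)\bigr)$, so it remains to prove
\[
\tr_{\pi_0E^{BC_3}/\pi_0E}\bigl(\widetilde{\psi^{C_3}}(x)\bigr)=x^3+\tr x(B).
\]
Both sides lie in $\pi_0E$, which is $3$-torsion-free, and $\pi_0E^{BC_3}$ is a finite free (hence torsion-free) $\pi_0E$-module by Proposition~\ref{prop:rezkstrickland}; so it suffices to verify this identity after $-\otimes\rationals$, where there is more structure to exploit.

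Second, I would split off the identity point. By Proposition~\ref{prop:rezkstrickland}, $\pi_0E^{BC_3}\cong\pi_0E[[x]]/[3](x)$, and $[3](x)$ has a simple zero at $x=0$ after inverting $3$, so Weierstrass preparation and the Chinese remainder theorem give
\[
\pi_0E^{BC_3}\otimes\rationals\ \cong\ \bigl(\pi_0E\otimes\rationals\bigr)\ \times\ \bigl(\pi_0E^{BC_3}/T_{C_3}\otimes\rationals\bigr),
\]
the first projection being the augmentation $x\mapsto 0$ and the second being reduction modulo $T_{C_3}$, which by Proposition~\ref{prop:rezkstrickland} is $\pi_0E[x]/(f)$ for the degree-$8$ polynomial $f$ whose companion matrix is $A$. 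Under this splitting $\widetilde{\psi^{C_3}}$ corresponds to the pair $\bigl(x\mapsto x^3,\ \psi^{C_3}\bigr)$: the second component is the quotient of $\widetilde{\psi^{C_3}}$ by $T_{C_3}$, which is $\psi^{C_3}$ by definition; the first component is $\widetilde{\psi^{C_3}}$ followed by the augmentation, which is the restriction of the total power operation along $\{e\}\hookrightarrow C_3\hookrightarrow\Sigma_3$ (with $\{e\}$ acting on three letters) and therefore sends $x$ to $x^3$ (alternatively this is the first-factor statement of Theorem~\ref{thm:charactercomputation} transported along $\widetilde{\eta}$). Since the trace of an algebra that is a finite product is the sum of the traces on the factors, this yields $\tr_{\pi_0E^{BC_3}/\pi_0E}\bigl(\widetilde{\psi^{C_3}}(x)\bigr)=x^3+\tr_{(\pi_0E^{BC_3}/T_{C_3})/\pi_0E}\bigl(\psi^{C_3}(x)\bigr)$.

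Third, I would compute the remaining trace. In the $\pi_0E$-basis $1,x,\dots,x^7$ of $\pi_0E[x]/(f)\cong\pi_0E^{BC_3}/T_{C_3}$, multiplication by $x$ is the companion matrix $A$, hence multiplication by a polynomial $g(x)$ is $g(A)$. The map $\psi^{C_3}$ is a continuous ring homomorphism and $\pi_0E$ is topologically generated by $h$ with coefficients in $W(\finitefield_9)$, so $\psi^{C_3}(x)=x\bigl(\psi^{C_3}(h)\bigr)$ for any element written as a power series in $h$. The crux is therefore to identify $\psi^{C_3}(h)\in\pi_0E[x]/(f)$; granting that the matrix of multiplication by $\psi^{C_3}(h)$ is precisely $B$, the matrix of multiplication by $\psi^{C_3}(x)=x(\psi^{C_3}(h))$ is $x(B)$, and so $\tr_{(\pi_0E^{BC_3}/T_{C_3})/\pi_0E}\bigl(\psi^{C_3}(x)\bigr)=\tr x(B)$. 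Combining the three steps gives $\alpha(x)=\tfrac13\bigl(x^3+\tr x(B)\bigr)$ and hence $\delta(x)=3x-\tfrac{x^3+\tr x(B)}{3}$.

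The main obstacle is exactly the identification in the third step: pinning down $f$ (equivalently $A$) and the element representing $\psi^{C_3}(h)$ (equivalently $B$). This is where Zhu's explicit formula for $\psi^3\colon\pi_0E\to\pi_0E^{B\Sigma_3}/T_{\Sigma_3}$ enters, together with the geometric description of $\eta$ from Proposition~\ref{prop:rezkstrickland} and Corollary~\ref{cor:complofmodcurves} as the completion of the level-structure map $X_1(3)\to X_0(3)$ at the supersingular point: translating Zhu's formula into the coordinate $x$ on $\pi_0E^{BC_3}/T_{C_3}$ produces $\psi^{C_3}(h)$ as an explicit element, and one checks by a lengthy but mechanical Weierstrass-preparation computation that its multiplication matrix is the $B$ of Notation~\ref{notation:ugly_stuff}. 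Everything else is formal: the reduction $\delta=3(\cdot)-\tfrac13\tr\circ\widetilde{\psi^{C_3}}$, the rational splitting off the identity point, additivity of the trace, and the passage back to $\pi_0E$ by torsion-freeness.
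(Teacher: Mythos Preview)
Your proposal is correct and follows essentially the same route as the paper: reduce to $\alpha$ via $\cardinality{BC_3}=3$, identify $\alpha$ as $\tfrac13$ of a trace (Proposition~\ref{prop:alphaistraceoftotalpo}), rationally split off the augmentation factor to produce the $x^3$ term, and compute the remaining trace on $\pi_0E^{BC_3}/T_{C_3}$ via the companion matrix of $f$. The only differences are cosmetic: the paper obtains the rational splitting and the $x^3$ component through Theorem~\ref{thm:charactercomputation} rather than a direct Chinese remainder argument, and the ``main obstacle'' you anticipate is dissolved by the paper's conventions---$B$ in Notation~\ref{notation:ugly_stuff} is \emph{defined} to be the polynomial $\eta\circ\psi^3(h)$ (computed by Sage in Proposition~\ref{prop:compofetapsi}) evaluated at $A$, so the identification of $B$ with the multiplication matrix of $\psi^{C_3}(h)$ is immediate and no separate verification is needed.
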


Recall from Corollary~\ref{cor:complofmodcurves} that, in order to calculate the maps $\eta$ and $\psi^3$, one should complete a map of moduli spaces of elliptic curves with level structures. Using concrete models for $X$, $X_0(3)$, and $X_1(3)$, Yifei Zhu computed in \cite{ZHU} explicit formulas for $\pi_0 E^{B\Sigma_3}/T_{\Sigma_3}$, $\pi_0 E^{BC_3}/T_{C_3}$, $\psi^3$, and $\eta$ in the case $p=3$ and height $2$.

\begin{theorem}\label{thm:mainzhu}
In the notation of Proposition~\ref{prop:compofE0}:
\begin{enumerate}
    \item \cite[Remark~8]{ZHU} The ring $\pi_0 E^{B\Sigma_3}/T_{\Sigma_3}$ is given by $\pi_0 E\squarebrack{a}/W(a)$, where
    \[
        W(a) = a^4 - 6a^2 + (h-9)a - 3 .
    \]

    \item \cite[Corollary~9]{ZHU} The additive total power operation $\psi^3$ is given by
    \begin{align*}
        \psi^3(h)
        &= h^3 + \parenth{a^3-6a-27}h^3 + 3\parenth{-6a^3+a^2+36a+67}h + 57a^3-27a^2 - 334a - 342,\\
        \psi^3(i)&=-i.
    \end{align*}

    \item \cite[Proposition~4]{ZHU}, together with Corollary~\ref{cor:complofmodcurves}. The ring $\pi_0 E^{BC_3}/T_{C_3}$ is given by $\pi_0 E\squarebrack{u}/f(u)$, where
    \[
        f(u) = u^8 + 3cu^7 + 3(h - 1)u^6 + (h - 1)cu^5 + 7cu^5 + 6(h - 1)u^4 - 6u^4 + 9cu^3 - cu^2 + 8u^2 - 3cu - 3,
    \]
    and $c$ is the square root of $h-1$ in $\pi_0 E$ which is congruent to $i$ modulo $(h,3)$.

    \item Corollary~\ref{cor:complofmodcurves} and \cite[Proposition~6(ii)]{ZHU}. The ring homomorphism
    \[
        \eta \colon \pi_0 E^{B\Sigma_3}/T_{\Sigma_3} \rightarrow \pi_0 E^{BC_3}/T_{C_3}
    \]
    induced by the natural inclusion $C_3 \rightarrow \Sigma_3$ is given by
    \begin{align*}
        a \longmapsto
        & \frac{1}{17-h}\left(
        cu^7 + (3c^2-2)u^6 + (3c^3-6)u^5 + (c^4+c^2+2)u^4 \right. \\& \quad \left. 
        + (4c^3-15c)u^3 + (c^2+2)u^2 - 12cu - 18 \right).
    \end{align*}
\end{enumerate}
\end{theorem}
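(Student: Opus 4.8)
The plan is to convert each of the four assertions into a concrete computation on moduli of elliptic curves with level structure, and then to quote the explicit models and manipulations of Zhu \cite{ZHU}. The bridge is provided by the results already established: $\spf \pi_0 E$ is the formal completion of the moduli stack $X$ of elliptic curves at the supersingular point $[C]\colon \spec\finitefield_9\to X$, and, by Proposition~\ref{prop:rezkstrickland} together with Corollary~\ref{cor:complofmodcurves}, the rings $\pi_0 E^{B\Sigma_3}/T_{\Sigma_3}$ and $\pi_0 E^{BC_3}/T_{C_3}$ are the formal completions, at the fibres over $[C]$, of the modular curves classifying respectively a cyclic subgroup of order $3$ and a point of order $3$ on an elliptic curve; the homomorphism $\eta$ is the completion of the natural "generate the subgroup" morphism from the second curve to the first; and $\psi^3$ is the map on deformation rings induced by the isogeny quotient $Q\mapsto Q/K$ by the universal order-$3$ subgroup. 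Thus the problem is entirely one of making these moduli-theoretic morphisms explicit near a supersingular point, which is exactly Zhu's computation.

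For parts (1) and (3) I would put the universal elliptic curve carrying the relevant level structure into a normal form: for a point of order $3$, the Tate normal form, in which the point becomes $(0,0)$ on a curve $y^2+a_1xy+a_3y=x^3$, gives an affine model of the modular curve of points of order $3$ with a single coordinate $u$ built from $a_1,a_3$, and an analogous normal form (or a modular-unit coordinate) handles the subgroup problem and produces a coordinate $a$. One then writes the forgetful morphism to $X$, i.e.\ the Hasse-invariant coordinate $h$ (with $c=\sqrt{h-1}$ and $\pi_0 E=\integers_3[[h]][c]/(c^2+1-h)$), as an explicit algebraic function of $u$ (resp.\ of $a$) over $\integers_3$, locates the supersingular point over $\finitefield_9$, and passes to the complete local rings; this exhibits $u$ (resp.\ $a$) as a root of a monic polynomial over $\pi_0 E$, namely the degree-$8$ polynomial $f$ (resp.\ the degree-$4$ polynomial $W$). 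The degrees are forced, being the finite-flat degrees of the level maps --- the eight points of order $3$ and the four cyclic subgroups of order $3$ on an elliptic curve --- and Proposition~\ref{prop:rezkstrickland}(1) guarantees the rings are finite free, so these monic presentations are the whole answer. As a sanity check the reduction modulo $(3,h)$ should be a power of a single irreducible, reflecting that a supersingular curve in characteristic $3$ has a unique subgroup of order $3$ (its Frobenius kernel); one verifies $W(a)\equiv a^4\pmod{(3,h)}$ and makes the analogous check for $f$. (Alternatively, part (3) can be approached by computing $\pi_0 E^{BC_3}=\pi_0 E[[x]]/[3](x)$ directly from the $3$-series of the universal deformation and dividing by the transfer ideal, but this is no shorter than the modular-curve route.)

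For part (4), $\eta$ is the completion of "send a point $P$ of order $3$ to the subgroup $\langle P\rangle$"; in coordinates this is a morphism from the $u$-curve to the $a$-curve, computed by writing down the subgroup generated by the marked order-$3$ point on the Tate-normal-form model and reading off the value of $a$, which gives the stated rational expression with denominator $17-h$, a unit of $\pi_0 E$. For part (2), $\psi^3$ classifies the quotient formal group: over $\pi_0 E^{B\Sigma_3}/T_{\Sigma_3}=\pi_0 E[a]/W(a)$ one has the universal order-$3$ subgroup $K\subset Q$, and $\psi^3$ carries $h\in\pi_0 E$ to the Hasse coordinate of $Q/K$; Zhu computes $Q/K$ using explicit Vélu-type isogeny formulas on the normal-form model, yielding the stated cubic-in-$h$ expression with coefficients in $\pi_0 E[a]/W(a)$. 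The companion formula $\psi^3(i)=-i$ records that on the unramified subring $\integers_3[i]\subset\pi_0 E$ the ring map $\psi^3$ restricts to the arithmetic Frobenius, since it lifts $x\mapsto x^3$ modulo the maximal ideal; and since $\pi_0 E$ is topologically generated over $\integers_3$ by $h$ and $i$, these two values determine $\psi^3$ completely.

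The main obstacle is not any single computational step but the alignment of conventions: one must check that Rezk's and Strickland's isogeny-theoretic descriptions of $\pi_0 E^{BG}/T$, of $\eta$, and of $\psi^p$ match the modular-curve models of \cite{ZHU} on the nose --- in particular the orientation of the isogenies (source versus target), the identification of the forgetful and quotient maps, and the normalisation of the coordinates $h$ and $c$ --- so that Zhu's explicit polynomials $W$, $f$ and his formulas for $\eta$ and $\psi^3$ may be transported verbatim. Once this dictionary is fixed, what remains is the lengthy but mechanical algebra of Weierstrass equations and isogeny formulas carried out in \cite{ZHU}.
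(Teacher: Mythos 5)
Your proposal is correct and takes essentially the same route as the paper: the paper offers no independent argument for this theorem beyond citing \cite{ZHU} through the dictionary of Corollary~\ref{cor:complofmodcurves}, and your sketch is precisely that dictionary (Rezk--Strickland moduli interpretations of $\pi_0E^{B\Sigma_3}/T$, $\pi_0E^{BC_3}/T$, $\eta$, $\psi^3$, completed at the supersingular point) together with an outline of Zhu's internal computation via normal forms and isogeny formulas. One small point in your favour: your assignment of the subgroup problem to $\Sigma_3$ and the point-of-order-$3$ problem to $C_3$ (hence $X_0(3)$ and $X_1(3)$ respectively) is the correct one, consistent with Zhu and with the paper's commutative diagram in Section~\ref{sec:computation}, even though Corollary~\ref{cor:complofmodcurves} as printed appears to interchange $X_0(p)$ and $X_1(p)$.
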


We have a commutative diagram
\[\begin{tikzcd}
	{\pi_0E} & {\pi_0E^{BC_p}} & {\pi_0E^{BC_p}/\text{transfer}} & {\mathcal{O}(X_1(p))}_{(p,h)}^{\wedge} \\
	{\pi_0E} & {\pi_0E^{B\Sigma_p}} & {\pi_0E^{B\Sigma_p}/\text{transfer}} & {\mathcal{O}(X_0(p))_{(p,h)}^{\wedge}}
	\arrow[from=2-1, to=1-1, equal]
	\arrow[from=2-1, to=2-2]
	\arrow[from=1-1, to=1-2, "\alpha"]
	\arrow[from=2-2, to=1-2]
	\arrow[from=2-4, to=1-4]
	\arrow[from=2-3, to=2-4, "\sim"]
	\arrow[from=1-3, to=1-4, "\sim"]
	\arrow[from=2-3, to=1-3]
	\arrow[from=1-2, to=1-3]
	\arrow[from=2-2, to=2-3]
\end{tikzcd}\]

\begin{proposition}\label{prop:compofetapsi}
    Using the notation of Theorem~\ref{thm:mainzhu}, the composition along the first row of the diagram is $\eta \circ \psi^3$. It is the map $\pi_0E \rightarrow \pi_0E[u]/(f(u))$ given by
    \begin{align*}
        h &\mapsto 
        \frac{
            c(6 \, h^{2} - 111 \, h + 361 \, )\,u^7
            +(17 \, h^{3} - 334 \, h^{2} + 1377 \, h - 1124)\,u^6
        }{h-17}\\
        &+
        \frac{
            3\, c \parenth{5 \,  h^{3} - 99 \,  h^{2} + 416 \,  h - 354 \, } u^5
            +\parenth{3 \, h^{4} - 18 \, h^{3} - 566 \, h^{2} + 3134 \, h - 2617}u^4
        }{h-17}\\
        &+
        \frac{
            c\parenth{ h^{4} - 56 \, h^{3} + 855 \,  h^{2} - 4069 \,h + 5509 } u^3
            +\parenth{3 \, h^{4} - 114 \, h^{3} + 1323 \, h^{2} - 4564 \, h + 3032}u^2
        }{h-17}\\
        &+
        \frac{
            3 \, c\parenth{ h^{3} - 17 \,  h^{2} + 48 \,  h - 128} u
            +h^{4} - 44 \, h^{3} + 579 \, h^{2} - 2301 \, h + 1557
        }{h-17}
    \end{align*}
\end{proposition}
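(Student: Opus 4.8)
The plan is to separate the statement into two parts: a formal part, identifying the top-row composite with $\eta\circ\psi^3$ by chasing the diagram, and a computational part, evaluating $\eta\circ\psi^3$ on the generator $h$ by substituting Zhu's explicit formulas from Theorem~\ref{thm:mainzhu}. The second part is where essentially all of the work lies.

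For the formal part: the composite of the first three maps of the top row is, by definition, the ring homomorphism $\psi^{C_3}\colon\pi_0 E\to\pi_0 E^{BC_3}/T_{C_3}$, and the last map of that row is Zhu's identification $\pi_0 E^{BC_3}/T_{C_3}\cong\pi_0 E[u]/f(u)$ of Theorem~\ref{thm:mainzhu}(3). First I would observe that the left-hand square of the diagram commutes: this is precisely the statement $\widetilde{\psi^{C_3}}=\widetilde{\eta}\circ\widetilde{\psi^3}$ established in Section~\ref{sec:powerop}. Next, the middle square commutes because $\eta$ was constructed exactly as the homomorphism induced by $\widetilde{\eta}$ on the transfer quotients, using the inclusion $\widetilde{\eta}(T_{\Sigma_3})\subseteq T_{C_3}$. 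Composing, the top-row composite equals the transfer quotient applied to $\widetilde{\eta}\circ\widetilde{\psi^3}$, which by the middle square is $\eta\circ\psi^3$ with $\psi^3\colon\pi_0 E\to\pi_0 E^{B\Sigma_3}/T_{\Sigma_3}$ and $\eta\colon\pi_0 E^{B\Sigma_3}/T_{\Sigma_3}\to\pi_0 E^{BC_3}/T_{C_3}$; this is the first assertion. The geometric identifications of Corollary~\ref{cor:complofmodcurves} are already incorporated into Zhu's formulas, so no further work with moduli spaces is needed here.

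For the computational part: by Theorem~\ref{thm:mainzhu}(2), $\psi^3(h)$ is an explicit polynomial of degree $\le 3$ in $a$ with coefficients in $\pi_0 E$, and by Theorem~\ref{thm:mainzhu}(4), $\eta$ is the $\pi_0 E$-algebra homomorphism sending $a$ to the displayed degree-$7$ polynomial in $u$. Since $\eta$ is a ring homomorphism, I would compute $\eta(\psi^3(h))$ by substituting $\eta(a)$ for $a$ in the formula for $\psi^3(h)$ and then reducing the result modulo $f(u)$ to bring it to the normal form of a polynomial of degree $\le 7$ in $u$. Matching this normal form against the displayed answer finishes the proof; indeed, by Notation~\ref{notation:ugly_stuff} the displayed right-hand side is exactly the polynomial $B$ with the companion matrix $A$ replaced by $u$, so this identity is what the definition of $B$ is engineered to record. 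The value of $\eta\circ\psi^3$ on the remaining generator $c$ is then obtained in the same way from $\psi^3(i)=-i$ together with the $\pi_0 E$-linearity of $\eta$.

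The only real obstacle is the size of the calculation in the second part: the substituted expression has degree $21$ in $u$ and must be reduced modulo the degree-$8$ polynomial $f(u)$ over the ring $\integers_3[[h]][c]/(c^2+1-h)$, keeping track both of the quadratic relation $c^2=h-1$ and of the denominators (all powers of the unit $h-17$). There is no conceptual difficulty — it is a polynomial identity in a finitely presented ring — but it is a computation one carries out with a computer algebra system, as in \cite{ZHU}, rather than by hand, and a worthwhile sanity check along the way is that $W(\eta(a))\equiv 0 \pmod{f(u)}$, which is what makes $\eta$ well defined in the first place.
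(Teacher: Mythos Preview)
Your proposal is correct and matches the paper's approach: the paper's proof is a single sentence stating that $\eta\circ\psi^3$ was computed in Sage and reduced modulo $f(u)$ via the Euclidean algorithm, which is exactly the substitution-and-reduction procedure you describe in your computational part. Your formal diagram-chase and the sanity checks you mention are reasonable elaborations, but the paper takes them as implicit in the setup and does not spell them out in the proof.
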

\begin{proof}
    We computed $\eta \circ \psi^3$ as a polynomial in $u$ using Sage, and then used the Euclidean algorithm to reduce modulo $f(u)$ and obtain the stated formula.
\end{proof}

We will now compute $\alpha$.

\begin{proposition}\label{prop:computationofalpha}
    Let $A,B$ be as in Notation~\ref{notation:ugly_stuff}.
    Then $\alpha$ is given by $\alpha(x) = \frac{x^3 + \tr x(B)}{3}$.
\end{proposition}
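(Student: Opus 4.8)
The plan is to use Proposition~\ref{prop:alphaistraceoftotalpo}, which tells us that $\alpha = \frac{1}{3}\tr \circ \widetilde{\psi^{C_3}}$, together with the factorization $\widetilde{\psi^{C_3}} = \widetilde{\eta}\circ\widetilde{\psi^3}$. The trace in question is that of the finite free $\pi_0 E$-algebra $\pi_0 E^{BC_3}$, which by Theorem~\ref{thm:mainzhu}(3) and Corollary~\ref{cor:complofmodcurves} can be identified with $\pi_0 E[u]/f(u)$ once we account for the transfer ideal; the key point is that the trace form only sees the quotient by the transfer ideal in the sense relevant here, or more precisely that $\pi_0 E^{BC_3}/T_{C_3} \cong \pi_0 E[u]/f(u)$ and the piece of $\widetilde{\psi^{C_3}}$ landing in the transfer ideal contributes in a controlled way. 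So first I would reduce the statement to computing the trace of the image of $x$ under the ring map $\eta\circ\psi^3 \colon \pi_0 E \to \pi_0 E[u]/f(u)$ described explicitly in Proposition~\ref{prop:compofetapsi}.

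Second, I would observe that since $f(u)$ is a monic degree $8$ polynomial, $\pi_0 E[u]/f(u)$ is free of rank $8$ over $\pi_0 E$ with basis $1,u,\dots,u^7$, and the companion matrix $A$ from Notation~\ref{notation:ugly_stuff} represents multiplication by $u$ on this basis. Consequently, for any polynomial expression $g(u) \in \pi_0 E[u]/f(u)$, multiplication by $g(u)$ is represented by the matrix $g(A)$, and its trace is $\tr g(A)$. In particular, the element $(\eta\circ\psi^3)(h) \in \pi_0 E[u]/f(u)$, written out as a degree $\le 7$ polynomial in $u$ with coefficients in $\pi_0 E$ in Proposition~\ref{prop:compofetapsi}, is represented precisely by the matrix $B$ — indeed $B$ was defined in Notation~\ref{notation:ugly_stuff} by substituting $A$ for $u$ in exactly that formula. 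Hence $(\eta\circ\psi^3)(h)$ corresponds to the matrix $B$ acting on $\pi_0 E^{8}$.

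Third, I need to upgrade from the generator $h$ to an arbitrary $x \in \pi_0 E = \integers_3[[h]][c]/(c^2+1-h)$. Since $\eta\circ\psi^3$ is a ring homomorphism and $\widetilde{\psi^3}$ fixes $i$ up to sign (Theorem~\ref{thm:mainzhu}(2)), and since $x$ is a power series in $h$ (possibly with a $c$ component, but $c = \sqrt{h-1}$ is itself determined by $h$), the image $(\eta\circ\psi^3)(x)$ is obtained by substituting the matrix $B$ (more precisely the element $(\eta\circ\psi^3)(h)$) into the power series $x$; convergence in the $(3,h)$-adic topology is guaranteed by the Proposition preceding Theorem~\ref{thm:computationofdelta}, which shows $B^n \to 0$. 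Thus multiplication by $(\eta\circ\psi^3)(x)$ on $\pi_0 E[u]/f(u)$ is represented by the matrix $x(B)$, and its trace is $\tr x(B)$. Combining with $\alpha = \frac13 \tr\circ(\eta\circ\psi^3)$ up to the contribution along the transfer part gives $\alpha(x) = \frac{x^3 + \tr x(B)}{3}$, where the $x^3$ term is the contribution of the ``diagonal''/transfer-ideal part of $\widetilde{\psi^{C_3}}$ — this is precisely what Theorem~\ref{thm:charactercomputation} isolates: rationally, $\widetilde{\psi^p}$ splits as $x \mapsto x^p$ on one factor and $\psi^p$ on the complementary factor, and pushing this through $\widetilde\eta$ and the trace produces the $\frac{x^3}{3}$ summand alongside $\frac{\tr x(B)}{3}$.

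The main obstacle I anticipate is bookkeeping around the transfer ideal: one must check carefully that the decomposition of $\pi_0 E^{B\Sigma_3}\otimes\rationals$ in Theorem~\ref{thm:charactercomputation} interacts correctly with $\widetilde\eta$ and the trace map of Proposition~\ref{prop:trasspan}, so that the trace of the full operation $\widetilde{\psi^{C_3}}(x)$ (not just its image in $\pi_0 E^{BC_3}/T_{C_3}$) splits as $x^3 + \tr x(B)$. Concretely, one needs that the transfer ideal $T_{C_3}$ is a free rank-one summand corresponding to the trivial torsion point, on which $\widetilde{\psi^{C_3}}$ acts rationally by $x\mapsto x^3$, while the complementary rank-$8$ summand is $\pi_0 E[u]/f(u)$ carrying the map of Proposition~\ref{prop:compofetapsi}; the trace is additive over this direct sum decomposition (of the underlying module), giving the two summands. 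Once this splitting of the trace is justified — essentially a rank count plus the identification of $T_{C_3}$ as the evaluation-at-$0$ factor — the rest is the routine linear-algebra identification of multiplication operators with matrices $g(A)$ and their traces, which I would not belabor.
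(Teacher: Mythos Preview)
Your proposal is correct and follows essentially the same route as the paper: invoke Proposition~\ref{prop:alphaistraceoftotalpo} to write $\alpha = \tfrac{1}{3}\tr\circ\widetilde{\psi^{C_3}}$, use the rational splitting of Proposition~\ref{thm:charactercomputation} to separate the $x^3$ summand from the $\eta\circ\psi^3$ summand, and then identify the latter's trace via the companion matrix $A$ of $f$ and the substitution $B = (\eta\circ\psi^3)(h)\big|_{u\mapsto A}$. The paper's version is terser and makes explicit one point you leave implicit: since $\pi_0 E^{BC_p}$ is finite free over $\pi_0 E$ (Proposition~\ref{prop:rezkstrickland}\ref{prop:rezkstrickland:goodgroups}), the map to the rationalization is injective, so the rational decomposition of Proposition~\ref{thm:charactercomputation} suffices to compute the integral trace; this is exactly what resolves the ``bookkeeping around the transfer ideal'' you flag as the main obstacle.
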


\begin{proof}
    By Proposition~\ref{prop:alphaistraceoftotalpo}, we have that $\alpha$ sends an element to the trace of its image under the total power operation, divided by $3$. By Proposition~\ref{prop:rezkstrickland}, item~\ref{prop:rezkstrickland:goodgroups}, the map $\pi_0E^{BC_p} \rightarrow \pi_0E^{BC_p} \otimes \rationals$ is injective, so in order to compute the trace we may pass to the rationalization. By Theorem~\ref{thm:charactercomputation}, the trace of $\widetilde{\psi}(x)$ is $\tr(\eta\circ\psi(x)) + x^3$.

    By Cayley--Hamilton, $f(A) = 0$. Moreover, for $p(u)\in \pi_0E[u]/(f(u))$ we have $\tr p = \tr p(A)$, since the representing matrix for $u$ in the basis $1,u,u^2,\ldots,u^{8}$ is $A$. Thus $x(B)$ is the representing matrix of $\eta\circ\psi(x)$ in $\pi_0E^{BC_p}/T_{C_3}$ with respect to the same basis.
\end{proof}

\begin{proof}[Proof of Theorem~\ref{thm:computationofdelta}]
    Lemma~\cite[5.3.4]{CSY1} gives that $\cardinality{BC_p} = 3$.
    The theorem is now an immediate consequence of Proposition~\ref{prop:computationofalpha} and the definition of $\delta$.
\end{proof}

\end{document}